\newtheorem{thm}{Theorem}[section]
\newtheorem{prop}[thm]{Proposition}
\newtheorem{lem}[thm]{Lemma}
\newtheorem{cor}[thm]{Corollary}
\theoremstyle{definition}
\newtheorem{definition}[thm]{Definition}
\theoremstyle{remark}
\newtheorem{remark}[thm]{Remark}
\newcommand{\nn}{\notag}
\newcommand{\R}{\ensuremath{\mathbb{R}}}
\newcommand{\half}{\frac{1}{2}}
\newcommand{\eps}{\varepsilon}
\newcommand{\til}[1]{\widetilde{#1}}
\newcommand{\ba}[1]{\overline{#1}}
\newcommand{\cN}{\mathcal{N}}
\newcommand{\bX}{\mathbf{X}}
\renewcommand{\d}{\partial}
\renewcommand{\bar}{\overline}
\newcommand{\bxi}{\boldsymbol{\xi}}
\newcommand{\bde}{\boldsymbol{\de}}
\newcommand{\Om}{\Omega}
\newcommand{\om}{\omega}
\newcommand{\si}{\sigma}
\newcommand{\de}{\delta}
\newcommand{\al}{\alpha}
\newcommand{\ga}{\gamma}
\newcommand{\vareps}{\varepsilon}
\newcommand{\pvint}{P.V.\!\!\int}
\newcommand{\ale}{\lesssim}
\newcommand{\dD}{\widehat{D}}
\newcommand{\cO}{\mathcal{O}}
\newcommand{\ov}{{[0, 1]^2}}
\newcommand{\tG}{\widetilde G}
\newcommand{\hP}{\widehat{P}}
\newcommand{\hS}{\widehat{S}}
\newcommand{\eAp}{e^{A}}
\newcommand{\eAm}{e^{-A}}
\newcommand{\bw}{\mathbf{w}}
\newcommand{\intto}{\int_{T_0}^t}
\newcommand{\inttoe}{\int_{T_0}^{T_e}}
\newcommand{\minT}{\min\{T, T_e\}}
\newcommand{\inttoet}{\int_{T_0}^{\min\{T, T_e\}}}
\newcommand{\intso}{\int_{T_0}^s}
\newcommand{\expo}[1]{\exp\left(#1\right)}
\newcommand{\To}{\mathbb{T}}
\begin{document}

\title[No local double exponential growth in hyperbolic flow]{No Local Double Exponential Gradient Growth in Hyperbolic Flow
for the 2d Euler equation}

\author{Vu Hoang}
\address{Rice University, Department of Mathematics-MS 136, Box 1892, Houston, TX 77251-1892}
\email{hoang@math.wisc.edu}
\author{Maria Radosz}
\address{Rice University, Department of Mathematics-MS 136, Box 1892, Houston, TX 77251-1892/Institute for Analysis, Karlsruhe Institute for Technology (KIT)\\
Kaiserstrasse 89, 76133 Karlsruhe (Germany)}
\email{radosz@math.uni-karlsruhe.de}

\begin{abstract}
We consider smooth, double-odd solutions of the two-di\-men\-sio\-nal Euler equation 
in $[-1, 1)^2$ with periodic boundary conditions. This situation is a possible candidate 
to exhibit strong gradient growth near the origin. 
We analyze the flow in a small box around the origin in a strongly hyperbolic regime and prove that the 
compression of the fluid induced by the hyperbolic flow alone is not sufficient to create 
double-exponential growth of the gradient.
\end{abstract}

\date{\today}
 \maketitle

\section{Introduction}

The question whether solutions of the two-dimensional Euler equation in vorticity form
\begin{align}\label{eqEuler}
\om_t + u \cdot \nabla \om &= 0,\quad u=\nabla^{\perp}(-\Delta)^{-1}\om
\end{align}
($\nabla^{\bot} = (-\d_{x_2}, \d_{x_1})$) can exhibit strong gradient growth in time is a topic of ongoing interest. The best known upper bound predicts
double-exponential growth in time:
\begin{align}\label{doubleExpUpperBound}
\|\nabla \om(\cdot, t)\|_{L^\infty(\Omega)} \leq C_1 \exp(C_2 \exp(C_3 t))
\end{align}
on a domain $\Omega$ with either a smooth boundary with no-flow boundary condition or no boundary (e.g. a torus).
The constants $C_i$ depend on the initial data. 
A natural and important question is: Are there flows for which this upper bound is attained? 
For domains with boundary, a recent breakthrough by A. Kiselev and V. \v Sver\'ak 
\cite{KiselevSverak} answers the question affirmatively. In \cite{KiselevSverak}, solutions are constructed that attain the
double-exponential bound \eqref{doubleExpUpperBound}.

For smooth solutions on the torus, the situation is far from clear.
The best known result  so far was given by S. Denisov.  In \cite{Denisov1}, he shows that at least superlinear gradient growth is possible
and in \cite{Denisov2} he provides an example of double-exponential growth for an arbitrarily long, but finite
time interval. In the recent paper \cite{Zlatos}, A. Zlato\v s constructs initial data leading to exponential gradient growth, 
his solution is however in $C^{1, \gamma}$ for some $\gamma \in (0, 1)$ and not in $C^2$. 

In \cite{KiselevSverak} the construction is based on creating a \emph{hyperbolic flow scenario}.  By imposing a symmetry 
on the solutions, a stagnant point of the flow is created on the boundary of the domain. The initial conditions are chosen
in such a way the flow on the boundary is directed towards the stagnant point, creating a strong fluid compression
and therefore strong gradient growth.

A natural way to carry the Kiselev-\v Sver\'ak construction to the torus is to consider double-odd solutions, i.e.
\begin{align}\label{doubleOddSym}
\om(-x_1, x_2)& = - \om(x_1, x_2), ~\om(x_1, -x_2) = - \om(x_1, x_2),
\end{align}
This construction was employed in \cite{Zlatos}. In \cite{Denisov2}, a perturbation argument
starting from a non-smooth double-odd stationary solution (see \cite{Bahouri}) was used. So far, however, creating infinite-time 
\emph{double-exponential} growth in the double-odd scenario was not succesful. Our goal in this paper is to 
explore the difficulties in using this scenario, by proving a conditional regularity result.

It is interesting to notice that the result \cite{KiselevSverak} is in some sense analogous to the still open blowup problem 
for for the more singular surface quasigeostrophic equation. In SQG blowup means that the solution becomes singular in finite time whereas for the 2d Euler 
equation ``blowup'' would mean maximal (double-exponential) gradient growth on an infinite time interval. 
There are important conditional regularity results for the SQG equation such as \cite{Cordoba,CordobaFefferman}, where 
the authors study a certain blowup scenario, in order to finally exclude it.
An analogous ``conditional regularity result'' for 2d Euler equation would be to show that in certain scenarios maximal gradient growth does not occur. Since 
the possible motions of fluids are various and in general very complicated, studying scenarios is an invaluable method to gain insight into 
regularity problems of fluid mechanics.

Our main result states that  a hyperbolic flow cannot create double-exponential gradient growth near the origin by itself when we 
start with double-odd $C^2$ initial data, provided a certain ``upstream" control is assumed on the flow.
This is an important step into understanding the double-odd hyperbolic scenario since we rule out the most promising 
candidate for a mechanism creating maximal gradient growth, i.e. the local hyperbolic compression. Our result does not imply
impossibility of double-exponential growth in general, but makes the construction of examples much harder.  

In some sense, the scenario considered here is complementary to the one considered by D. Cordoba for the SQG
equation in \cite{Cordoba}, where a closing hyperbolic saddle is considered. There the solution stays smooth except for
the possible closing of the saddle. In our scenario for 2d Euler, the hyperbolic saddle is fixed due to the symmetry
($\om=0$ on the coordinate axes), and we are asking if blowup can happen in another way.

We strongly believe that the techniqes developed here will also be useful in understanding the hyperbolic
scenario for other models in fluid mechanics and also in situations with a physical boundary. There, although the goal
is to prove the existence of a blowup, a certain amount of control up to the blowup time is necessary.   

Interesting results concerning the related question of existence of double-exponential growth in the context
of (nonsmooth) patch solutions were given by S. Denisov (see \cite{Denisov3}).

Finally, we would like to mention the recent preprint \cite{Katz}, where a different approach is proposed to study whether double-exponential gradient growth can occur at an interior point. 

\subsection{Setup and feeding conditions.}
We consider \eqref{eqEuler} on $\To=[-1,1)^2$ with periodic boundary conditions and double-odd $C^2$ initial data $\om_0$. 
From now on, we use $\|\cdot\|_\infty$ to denote the $L^\infty$-norm on the torus $\To$.

The double-odd symmetry is preserved by the evolution and \eqref{doubleOddSym} implies that the origin is a stagnant point 
of the flow field for all times. Moreover, the flow on each coordinate axis is always directed along that axis. When considering 
smooth solutions $\om\in C^1([0, \infty), C^2(\To))$,
\eqref{doubleOddSym} also implies 
\begin{align*}
\om = 0
\end{align*} 
on the coordinate axes.

We will study the flow in boxes of the form
\begin{align*}
D&=(0,\de_1)\times(0,\de_2),~~\dD=(0,\de_1+\de_3)\times(0,\de_2),
\end{align*}
where $\de_j$ are positive, but small and  
\begin{align*}
0<\de_1< \de_2 < \de_1+\de_3. 
\end{align*}
In a hyperbolic flow,  the origin is a stagnant point of the flow and fluid particles constantly enter the box
$D$ from the right and leave on the top (see Fig.~\ref{fig1}). The particles moving on the $x_1$-axis approach
asymptotically the origin and never leave the box $D$.
Generally speaking, there is, a compression of the fluid in $x_1$-direction
and a decompression in $x_2$-direction (or the other way around). The vorticity is zero on the axes. The gradient growth
in the box $D$ comes from two sources: particles that were at $t=0$ inside $D$ and those which enter the box at later times.
The time evolution of the gradient of those particles entering the box is difficult to control over infnite times, and is generated by
flow situations which have little to do with the hyperbolic scenario. We are interested in making local statements and must \emph{assume} 
a certain control on the flow entering the box $D$. 

We shall therefore call $\dD\setminus D$ \emph{feeding zone} and formalize this idea in the following definition
(the meaning of the parameter $\al$ will become clear later).
\begin{definition} Let $\al\in(0,\frac{1}{4})$.
The box $\dD$ is said to satisfy the conditions of {\em controlled feeding}, with feeding parameter $R \geq 0$ if 
\begin{align}\label{eqfeeding1}
\left|\frac{\d \om}{\d x_1}(x, t)\right|\leq R x_2^{1-\al}, ~~\left|\frac{\d \om}{\d x_2}(x, t)\right|\leq R \quad (x\in \dD\setminus D)
\end{align} 
for all times $t \geq 0$.
\end{definition}
We can think of the first inequality in \eqref{eqfeeding1} as a H\"older-version of a bound on $\d_{x_2, x_1}\om$, keeping in mind that $\d_{x_1}\om(x_1, 0, t)=0$
for all times. The concept of controlled feeding conditions allows us to study the evolution of $\om$
in $D$ independent of the remaining flow. Note that for the purposes of this paper, we consider time-independent $R$ only
(see also Remark \ref{timeDepFeeding}).

\subsection{The hyperbolic scenario.} \label{secHyperbolic}

In order to give a definition of hyperbolic flow suitable for our purposes, we introduce the following
important quantity. Let $\al\in (0, \frac{1}{4})$ be fixed. For a smooth, periodic function $\om$ we set
\begin{align*}
M(x, t) := \max_{0\le y_1, y_2\le \max\{x_1, x_2\}}\left\{\left|y_1^\al \frac{\d \om}{\d x_1}(y, t)\right|, \left|y_2^\al \frac{\d \om}{\d x_2}(y, t)\right|\right\}+\|\om\|_\infty.
\end{align*}
Note that $M(x,t)$ also depends on $\om$ and $\al$.
The velocity field $u(x, t) := \nabla^\bot (-\Delta)^{-1}\om$ for double-odd $\om$ 
($\om$ with mean zero over $\To$) can be written in the form
\begin{align}\label{velStruc}
u_1(x, t) = - x_1 Q_1(x, t), ~u_2(x, t) = x_2 Q_2(x, t)
\end{align}
where $Q_1, Q_2$ are scalar fields given by certain integral operators (see \eqref{defQ}) acting on $\om$.
The following definition states that we regard the flow as hyperbolic if both $Q_1$ and $Q_2$ essentially have a positive
lower bound, up to a term controlled by the quantity $M(x,t)$. 

\begin{definition}\label{defhyperbolicflow}
Let $\om$ be a smooth solution of the Euler equation, and let $\al\in (0, \frac{1}{4})$ be fixed.
We say that the flow is hyperbolic near the origin if there are constants $\rho, A, \beta_0>0$ 
for which the following condition is satisfied for all $t \in [0, \infty)$:
\begin{align}\label{Hyperbolicity}
Q_i(x, t) + A |x|^{1-\al} M(x, t) \geq \beta_0 > 0 \qquad(0\le x_1, x_2 \le \rho,~i=1,2).
\end{align}
\end{definition}
The model situation for a hyperbolic flow is the following: Consider the dynamical system
\begin{align*}
\dot x_1 &= - A x_1, ~~ \dot x_2 = B x_2
\end{align*}
with positive constants $A, B$. This system has a hyperbolic saddle point in $(0, 0)$, which
is a stagnant point of the flow. On the axes, the flow is directed along the axes.
 
The velocity field given by \eqref{velStruc} generalizes this structure if $Q_i(x, t) \geq \beta_0 > 0$. A further 
generalization necessary for our result is \eqref{Hyperbolicity}, since the stronger condition $Q_i(x, t) \geq \beta_0 > 0$
cannot be easily realized. In certain flow situations, the term $|x|^{1-\al} M(x, t)$ is small close to the origin. 
Bounding the quantity $M(x, t)$ plays a central role in our estimates. 

\begin{remark}
By choosing the initial data $\om_0$  suitably, we can ensure hyperbolic flow.
One possible choice is, for example, choosing $\om_0$ to be nonnegative in $[0, 1]^2$ and such that $\om_0 = 1$ on a set of sufficiently large measure, as it
was done in \cite{KiselevSverak, Zlatos}. This creates 
a situation where \eqref{Hyperbolicity} is satisfied (see theorem \ref{thmLowerboundQ2}). 
In this sense, \eqref{Hyperbolicity} is a ``realistic" condition on the flow. 
\end{remark}

\subsection{Main result.}
Our main result is the following theorem. 
\begin{thm} \label{main} Fix $0< \al < \frac{1}{4}, 0<\de_3< 1/2$. Let $\om$ be a $C^2$, double-odd solution of the Euler equation with initial data $\om_0$, and  
suppose the flow is hyperbolic near the origin.  Let $R> \|\om_0\|_\infty > 0$ be given. 
Then there exist small $\de_1, \de_2>0$ depending on $\al, \beta_0, R$ and $\om_0$, such that if $\dD$ satisfies the controlled feeding conditions with parameter $R$,
then  
\begin{align*}
\|\nabla \om\|_{D, \infty} \leq C_1 \exp( C_2 t)\quad (t\in [0, \infty))
\end{align*}
for some $C_1, C_2 > 0$ depending on $R, \al,\beta_0, \de_1,\de_2, \de_3, \om_0$.
\end{thm}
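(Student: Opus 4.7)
The plan is to propagate the weighted pointwise bounds
\begin{equation*}
|\d_1 \om(x,t)| \leq A(t)\, x_2^{1-\al}, \qquad |\d_2 \om(x,t)| \leq A(t) \qquad (x \in D),
\end{equation*}
and to show that $A(t) \leq C_1 e^{C_2 t}$. These are tailored so that they match the controlled-feeding conditions on $\dD\setminus D$ in the same form; the whole task is then to close a Gronwall-type estimate inside $D$ without producing the logarithm responsible for the classical double-exponential upper bound.

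First I would exploit the hyperbolic flow assumption to write $u_1(x,t) = -\lambda(t)x_1 + r_1(x,t)$ and $u_2(x,t) = \lambda(t)x_2 + r_2(x,t)$ near the origin, with $\lambda(t)$ bounded in terms of the conserved quantity $\|\om_0\|_\infty$ and $r_j$ of lower order. For $\de_1,\de_2$ sufficiently small, this controls the diagonal velocity gradients $\d_1 u_1 \approx -\lambda$ and $\d_2 u_2 \approx \lambda$ uniformly in time on $\dD$. Along a trajectory $\Phi_t(x)$, the components $\om_j := \d_j \om$ then satisfy
\begin{equation*}
\tfrac{D}{Dt}\om_1 = \lambda\,\om_1 - (\d_1 u_2)\om_2 + \text{l.o.t.}, \qquad \tfrac{D}{Dt}\om_2 = -(\d_2 u_1)\om_1 - \lambda\,\om_2 + \text{l.o.t.}
\end{equation*}

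Next, for each $x\in D$ and $t\geq 0$, I would trace the trajectory backward. Either it reaches $s=0$ still inside $\dD$, where the $C^2$ initial data supply the target bounds, or it exits $D$ to the right through $x_1 = \de_1$ at some $s<t$, at which point \eqref{eqfeeding1} supplies $|\om_1| \leq R\, x_2^{1-\al}$ and $|\om_2| \leq R$. Along the segment of the trajectory inside $D$, $x_2$ varies only by a factor $e^{\pm\lambda t}$, so the weight $x_2^{1-\al}$ is compatible with the expected exponential amplification of $\om_1$ and decay of $\om_2$. The remaining step is to control the cross-derivatives $\d_1 u_2$ and $\d_2 u_1$ via the Biot-Savart formula and the double-odd symmetry, aiming for a pointwise bound of the form
\begin{equation*}
|\d_1 u_2(x,t)| + |\d_2 u_1(x,t)| \leq B_0 + B_1 A(t) \qquad (x\in D).
\end{equation*}
Inserting this into the system above yields a linear differential inequality for $A(t)$, hence exponential growth.

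The hard part will be precisely this Biot--Savart estimate. The naive bound $\|\nabla u\|_\infty \lesssim \|\om\|_\infty \log(2 + \|\nabla \om\|_\infty)$ contains a logarithm that is exactly what produces the classical double exponential; to replace it by a constant one must use the H\"older-type weight $x_2^{1-\al}$ together with $\om = 0$ on the coordinate axes and the geometric layout of $\dD$, so that the singular contribution of the kernel near the origin is integrable against the weight and yields a bound linear in $A$. The restriction $\al < \frac{1}{4}$ reflects this balance---strong enough to tame the singular kernel in the Biot--Savart integral, weak enough to remain consistent with $\om\in C^2$ and with the feeding geometry. Once the linear estimate is in hand, a Gronwall argument along trajectories in $D$ delivers the desired exponential bound on $\|\nabla \om\|_{D,\infty}$.
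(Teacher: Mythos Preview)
Your overall strategy---propagate weighted bounds matching the feeding conditions along trajectories, then close via a Gronwall-type argument---is exactly the paper's plan, and you correctly identify the Biot--Savart estimate on the cross-derivatives as the crux. But two concrete steps in your outline would not go through as written.

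First, the pointwise bound $|\d_1 u_2(x,t)| + |\d_2 u_1(x,t)| \le B_0 + B_1 A(t)$ uniformly on $D$ is too optimistic. Writing $u_1 = -x_1 Q_1$, $u_2 = x_2 Q_2$, these cross-derivatives are $x_2\,\d_{x_1}Q_2$ and $x_1\,\d_{x_2}Q_1$, and the paper's estimates (Lemma~\ref{lemFor_c_b}) necessarily contain factors $|\log d(x)|$ and $d(x)^{-1+\gamma}$, where $d(x)=\de_2 - x_2$ is the distance to the top of the box. These come from the portion of the Biot--Savart integral over $\ov\setminus\dD$, where you have no control beyond $\|\om\|_\infty$; the H\"older weight only tames the near-field contribution inside $\dD$. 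So the logarithm does not simply disappear---it has to be absorbed by integrating along the trajectory and using that the particle spends only bounded time (Proposition~\ref{propTime}) in the dangerous strip $X_2 \ge \tfrac12\de_2$.

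Second, even granting a bound of the shape $|c|,|b| \ale M_{\dD}\,x_i^{1-\al} + (\text{lower order})$, inserting this into the $2\times 2$ system does \emph{not} yield a linear differential inequality for $A(t)$: the off-diagonal term $c\,\om_2$ contributes $\sim M_{\dD}\cdot M_{\dD}\cdot x_2^{1-\al}$, which is \emph{quadratic} in the control quantity. The paper avoids this trap by a bootstrap: one first proves that $M_D(t) \le R$ for all time via a contradiction argument (Theorem~\ref{mainTechnicalThm} and the ``harmless nonlinearity'' mechanism), using that the smallness of $\de_1,\de_2$ makes the quadratic feedback subcritical. Only \emph{after} $M_D\le R$ is established does one go back and derive the exponential bound on $\nabla\om$ itself. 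The intermediate ODE analysis is also more delicate than a scalar Gronwall: it requires a Willett-type inequality (Lemma~\ref{lem_Willet}) for the coupled $2\times 2$ system, and the cancellation you describe---the growth $e^{\int Q_1}$ of $\om_1$ against the smallness $X_2(T_0)^{1-\al}$ from the feeding condition---has to be invoked repeatedly (the ``key Lemma''~\ref{keyLemma}), ultimately four times, which is exactly where the restriction $\al<\tfrac14$ enters.
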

This means that the hyperbolic compression alone and the interaction of the fluid inside the box is not sufficient
to create double-exponential gradient growth.
One would have to create a scenario where the feeding conditions are violated. This means roughly that there has to be a kind of compression in
$x_2$-direction in the feeding zone. This would have to be caused by much more complicated interactions outside the box. At the present
time, no such scenario is known.

\section{Gradient growth in the hyperbolic scenario}

Before describing our approach, let us explain first why at first sight the hyperbolic scenario seems to be a good candidate 
for double-exponential growth. Namely, for $Q_1, Q_2$ we have the upper bounds
\begin{align*}
Q_1(x, t), Q_2(x, t) \ale \|\om\|_{\infty} |\log(x_1^2 + x_2^2)|.
\end{align*}
If it were possible to create a situation where a \emph{lower bound} of roughly the same order holds, i.e.
$Q_1 \geq C |\log(x_1^2 + x_2^2)|$ over an infinitely long time interval, then for the particle trajectories 
lying on the $x_1$-axis (i.e. $X_2=0$) 
\begin{align*}
X_1(t) \le \exp( - C_1 \exp( C_2 t))
\end{align*}
would hold, as seen by solving the ODE $\dot X_1 = -X_1 Q_1$. If, moreover, one could arrange for the initial data $\om_0$ 
to have suitable nontrivial values on the $x_1$-axis, then this would create double exponential gradient growth. 
However,  the simultaneous requirements of smoothness and double-odd symmetry of $\om$, 
necessarily imply $\om=0$ on the axes. Moreover, it is highly unclear how a such strong lower bound on $Q_1$ could be achieved. 
As we shall see later, a certain amount of smoothness of $\om$ and the vanishing of $\om$ on the axes 
lead to a better upper bound, {\em without} the logarithmic behavior which is crucial for the double-exponential growth. 

Another way one might hope to get double exponential growth is to consider a ``projectile'', i.e. to track the movement
of a small domain close to the origin on which $\om = 1$, as it was done in \cite{KiselevSverak}. There the self-interaction of the projectile
was able to create enough growth in the values of $Q_1$ to allow double-exponential growth. While the projectile
approaches the origin, the values of $Q_1$ on it get larger, this fact being connected to a certain logarithmically divergent integral. 
Our Theorem \ref{main} shows that in general this is not possible for double-odd solutions, unless there is some kind of compression in $x_2$-direction in the feeding zone. Thus a scenario with maximal gradient growth must be much more complicated than using the self-interaction of the projectile.

In fact, provided the feeding condition holds, the steady fluid compression guaranteed by \eqref{Hyperbolicity} will turn out 
to stabilize the flow in the neighborhood of the origin.  That is, the hyperbolicity condition \eqref{Hyperbolicity}
- which is essentially a lower bound on $Q_i$ - is converted in the proof of Theorem \ref{main}
into an upper bound for $Q_i$. This is what finally leads to a bound on the gradient growth in $D$. 


\subsection{Heuristic considerations.}\label{subsec:heuristic}
We now present an intuitive discussion of our result. Fluid particles carried by the hyperbolic flow will
 constantly enter the box $D$ from the right and leave on
the top (see figure \ref{fig1}). All particles except for those moving on the axes spend a finite time
in the box. The particles on the $x_1$-axis move towards the
left approaching the origin asymptotically as $t\to \infty$. Particle trajectories
$t\mapsto \bX(t)=(X_1(t), X_2(t))$ for which $X_2(0)$ is small approximate the
straight trajectories of the particles on the $x_1$-axis for a long time, before going steeply
upward. The time a particle spends in $D$ goes to infinity as $X_2(0)\to 0$.

\begin{figure}[htbp]
\begin{center}
\includegraphics[scale=1]{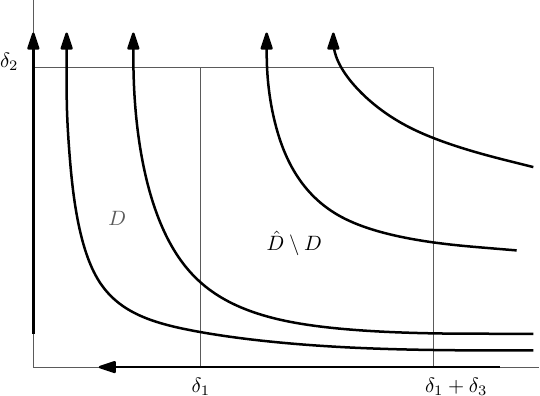}%
\end{center}
\caption{Illustration of a hyperbolic flow.} \label{fig1}
\end{figure} 

We now consider the trajectory of a particle $\bX$. The particle may 
have started inside $D$ at time $t=0$, or may have entered the box at some
time $T_0 > 0$, in which case $\bX(T_0)\in \d D$. Also, assume that the particle
exits the box $D$ at some time $T_e$, i.e. $X_2(T_e)=\de_2$. 
The evolution of the gradient of $\om$ along the trajectory is given by an ODE of the form
\begin{align}\label{ode1}
\frac{d}{d t}\nabla \om(\bX(t), t) = (-\nabla u(\bX(t), t))^T \nabla \om(\bX(t), t)
\end{align}
where $\nabla u$ is the velocity gradient. The relation \eqref{ode1} is simply derived by
differentiating the Euler equation. The key is now to use the structure \eqref{velStruc}
of the velocity field such that we obtain
\begin{align}\label{ode2}
\frac{d}{d t}\nabla \om(\bX(t), t) = \begin{pmatrix}
Q_1 + x_1 \frac{\d Q_1}{\d x_1}& -x_2\frac{\d Q_2}{\d x_1}\\
x_1 \frac{\d Q_1}{\d x_2} & - Q_2 - x_2 \frac{\d Q_2}{\d x_2}
\end{pmatrix}
\nabla \om(\bX(t), t)
\end{align}
We write the right hand side of \eqref{ode2} as 
\begin{equation*}
\begin{pmatrix}
a(t) & c(t) \\
b(t) & -a(t)
\end{pmatrix}
\nabla \om(\bX(t), t).
\end{equation*}
evaluating all matrix entries along the given trajectory $\bX$. Note that the matrix has trace zero, since the
velocity field $u$ is divergence free.
There are several ways we can heuristically regard \eqref{ode2} as a perturbation of an easier problem.

\begin{itemize}
\item For the discussion assume that $Q_1,Q_2>0$ and we can control the derivatives $\d_{x_j} Q_i$ for small $x$. Since in a sufficiently small box $x_1 \d_{x_1} Q_1, x_2 \d_{x_2} Q_2$ should be rather ``small" (due to the prefactors $x_1, x_2$), 
$a$ should be positive and bounded away from zero along the hyperbolic trajectory.
To gain some insight, we consider the case of a particle moving close to the $x_1$-axis, i.e. with small $X_2(T_0) > 0$. We expect that
$c = -x_2 \d_{x_1} Q_2, b = x_1 \d_{x_2} Q_1$ are ``small". Life would be easy if we could neglect $b, c$ and set $b, c=0$ in \eqref{ode2},
so that we have a diagonal system. Denoting $\bxi(t) = \nabla \om(\bX(t),t)$
the solution can be explicitly computed to be 
\begin{align}\label{eqSol}
\xi_1(t) = e^{A(t)} \xi_1(T_0), ~~\xi_2(t) = e^{-A(t)} \xi_2(T_0).
\end{align}
where $A(t) = \int_{T_0}^t a(\bX(s))~ds$. \eqref{eqSol} shows that, in general,
the gradient in $x_1$-direction grows along the particle trajectory.
However, there is
an effect which allows us to cancel the growing factor $e^A$. Assume for the sake of the discussion that the following stronger feeding conditions hold:
\begin{align}\label{eqfeeding2}
\left|\frac{\d \om}{\d x_1}(x, t)\right|\leq R x_2, ~~\left|\frac{\d \om}{\d x_2}(x, t)\right|\leq R,
\end{align}
on $\dD$ for $t=0$ and on $\dD\setminus D$ for all $t>0$.
These imply in either case $T_0=0$ and $T_0>0$
\begin{align}\label{eqIntro1}
|\xi_1(t)|\leq e^{A(t)} |\xi_1(T_0)| = e^{A(t)}\left|\frac{\d \om}{\d x_1}(\bX(T_0), T_0)\right| \leq R e^{A(t)} X_2(T_0).
\end{align}
Now we observe that
\begin{align}\label{eqIntro3}
A(t) \approx \intto Q_2(s)~ds
\end{align}
temporarily neglecting the ``small'' term $x_2 \d_{x_2} Q_2$. Now from \eqref{velStruc} we have the differential equation $\dot X_2 = X_2 Q_2$, so that 
$X_2(t) = X_2(T_0) \expo{\intto Q_2(\bX(s)) ~ds}$
and hence
\begin{align}\label{eqIntro2}
X_2(T_0) = X_2(T_e) \expo{- \inttoe Q_2(\bX(s)) ~ds}\leq \de_2\expo{- \inttoe Q_2(\bX(s))~ ds}.
\end{align}
Combining \eqref{eqIntro2}, \eqref{eqIntro1} and \eqref{eqIntro3}, we get
\begin{align*}
|\xi_1(t)|\leq \de_2 R \expo{ - \int_{t}^{T_e} Q_2(\bX(s))~ds} \leq \de_2 R,
\end{align*}
suggesting that the gradient
in $x_1$-direction \emph{does not grow at all in time} given the feeding condition \eqref{eqfeeding2}. Our rigorous result does not give such a strong conclusion, but we will be able to prove that the gradient 
\emph{grows at most exponentially in time} using a weaker feeding condition. In Remark \ref{remAlpha} we explain why \eqref{eqfeeding2} is not an appropriate feeding condition for the problem. 
\end{itemize}

The heuristics appear deceivingly simple, but in order to make the argument rigorous, we have to overcome a number of 
formidable technical difficulties. To begin with, the coefficients of \eqref{ode2} depend on the solution $\om$ through the integral operators $Q_1, Q_2$. 
The derivatives $\d_{x_1} Q_1, \d_{x_2} Q_2$ are given by singular integral operators. 

Of course, none of the coefficients may be neglected, and we have to produce sufficiently good estimates on the solutions of the full ODE system \eqref{ode2}. A major obstacle in getting good estimates, however, is caused by the unstable nature of \eqref{ode2}. To illustrate this we consider a tridiagonal system by setting $c=0$, but keeping $b$, so
that we get a supposedly better approximation than the diagonal system. In this model, too, the solutions can be calculated explictly, and we get
\begin{align*}
\xi_1(t) = e^{A(t)} \xi_1(T_0), ~~\xi_2(t) = e^{-A(t)} \left[ \xi_2(T_0)+ \xi_1(T_0)\int_0^t b(s) e^{2 A(s)}~ds\right].
\end{align*}
This shows that not only the derivative in $x_1$-direction but also the derivative in $x_2$-direction of $\om$ may potentially 
grow in time (due to the contribution $e^{-A(t)}\int_0^t b(s) e^{2 A(s)}~ds$). To make things worse, a possible strong growth in $\d_{x_2}\om$ is coupled back into the coefficients of
the ODE \eqref{ode2} via our estimates on $\d_{x_1} Q_1, \d_{x_2} Q_2$. On the other hand, by a similar argumentation as in the case of the diagonal system,
the factor $\xi_1(T_0)$ may help via a feeding condition. 
We need therefore to proceed with extreme care, looking to cancel the growing factor $e^A$ with the decaying factor $e^{-A}$ whenever possible.

\begin{remark}\label{timeDepFeeding}
In our scenario, we always assume the intensity of the feeding (i.e.~the quantity $R$) to be time-independent. 
One might think of allowing the feeding parameter to grow in time to include more complicated scenarios. 
However, this is met with considerable challenges.

Firstly, it is not clear what a realistic condition on $R$ should be, since it depends on the complexity of the flow away from the origin.
One concrete situation where we can imagine a reasonable time-dependent feeding condition is as follows:  
A vortex created by a large patch (see Figure \ref{fig3}) where $\om$ is constantly $1$. 
The flow revolves in clock-wise direction around the patch and,
in analogy with a shear flow, one could assume linear growth in time of the gradient in the feeding zone. 

The application of the techniques developed here to time-dependent feeding are not straightforward
(see Remark \ref{aboutTimeDepFeeding}), due to the non-local and non-linear nature of the problem. 
\end{remark}
\begin{figure}[htbp]
\begin{center}
\includegraphics[scale=1]{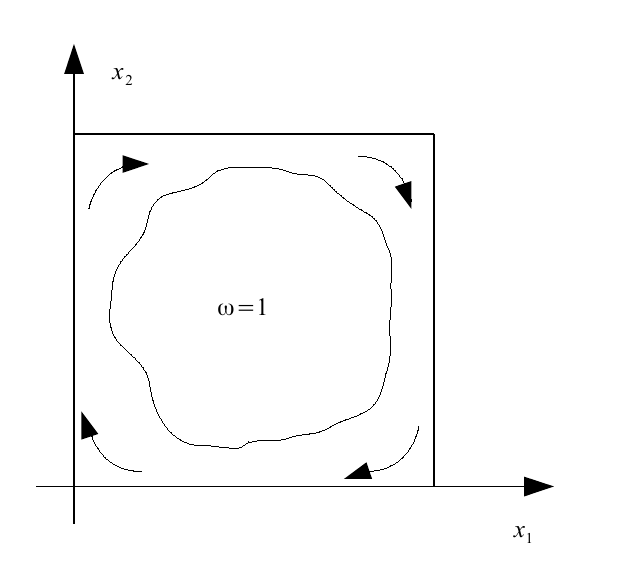}%
\end{center}
\caption{Flow around a patch.} \label{fig3}
\end{figure} 

\section{Notation}

\subsection{Euler velocity field.}

For $x=(x_1,x_2)$ we write $\til x = (-x_1,x_2)$ and $\bar x=(x_1,-x_2)$. The velocity field for the Euler equation is
\begin{align*}
u(x, t) = \frac{1}{2\pi}\int_{\R^2} \frac{(y-x)^\bot}{|y-x|^2} \om(y, t)~dy
\end{align*}
where $\om\in C^2(\To)$ is periodically extended to all of $\R^2$ and $z^\bot = (-z_2, z_1)$.
In the calculation of the integral a limit in the
mean (sequence of unboundedly growing domains) is understood. Note that the velocity field is $\nabla^\bot (-\Delta)^{-1}\om$,
where $-\Delta$ is the periodic Laplacian on the Torus $\To$. A simple calculation using the double-odd symmetry of $\om$ leads to
\begin{align*}
u_1(x, t) = -x_1 Q_1(x, t), ~~u_2(x, t) = x_2 Q_2(x, t)
\end{align*}
where $Q_1, Q_2$ are the following integral operators (see Appendix B)
\begin{align}\label{defQ}\begin{split}
Q_1(x, t)&=c_0\int_\ov[G_1^1(x,y)+G_1^2(x,y)]\om(y,t)~dy+Q_1^r(x, t)\\
Q_2(x, t)&=c_0\int_\ov[G_2^1(x,y)+G_2^2(x,y)]\om(y,t)~dy+Q_2^r(x, t)
\end{split}
\end{align} 
with kernels
\begin{align*}
&G_1^1(x,y)=\frac{y_1(y_2-x_2)}{|y-x|^2|y-\til x|^2},
&G_1^2(x,y)=\frac{y_1(y_2+x_2)}{|y+x|^2|y-\bar x|^2},\\
&G_2^1(x,y)=\frac{y_2(y_1+x_1)}{|y+x|^2|y-\til x|^2},
&G_2^2(x,y)=\frac{y_2(y_1-x_1)}{|y-x|^2|y-\bar x|^2},
\end{align*}
where $c_0$ denotes the right constant.
The expression $Q_1^r$ is given by the following (limit in the mean) integral
\begin{align*}
c_0 \int_{\R^2_+ \setminus \ov}[G_1^1(x,y)+G_1^2(x,y)]\om(y)~dy,\qquad 
\R^2_+ = (0, \infty)^2,
\end{align*}
a similar formula holding for $Q_2^r$. 

In section \ref{PotentialTheory} we will derive estimates for the entries of the matrix in \eqref{ode2} which are 
independent of the trajectory. For this purpose it is convenient to use the definitions:
\begin{align}\label{def:abc}
\begin{split}
&a(x,t):=Q_1(x,t)+x_1\frac{\d Q_1}{\d x_1}(x,t)=Q_2(x,t)+x_2\frac{\d Q_2}{\d x_2}(x,t),\\
&b(x,t):=x_1\frac{\d Q_1}{\d x_2}(x,t),\\
&c(x,t):=-x_2\frac{\d Q_2}{\d x_1}(x,t).
\end{split}
\end{align}
Moreover, since the estimates will be for fixed $t$ we shall often skip the $t$ variable in the notation. 
When evaluating $a,b,c,Q_i$ etc.~along a particle trajectory $\bX(t)$ in section \ref{sec:main} we shall write
$a(t):=a(\bX(t),t)$ etc. reconciling with the notation in section \ref{subsec:heuristic}.

\subsection{Convention for estimates.} The notation $f \ale g$ means
\begin{align*}
f \le C g,
\end{align*}
where $C$ may depend on $\al, \beta, \|\om\|_\infty$ and on universal constants,
e.g. geometrical characteristics of the domain $\To$.  $C$ does not depend on $\de_1, \de_2, \de_3,t$.
When using this notation, we shall always imply that $C<\infty$ for all $\al\in (0, \frac{1}{4})$.

\section{Potential theory of $Q_1, Q_2$} \label{PotentialTheory}

\subsection{Sufficient conditions for hyperbolic flow}

We will be working with boxes of the form
\begin{align}
D&=(0,\de_1)\times(0,\de_2)\nn \\
\dD&=(0,\de_1+\de_3)\times(0,\de_2) \label{defD},
\end{align}
with the following restriction:
\begin{align}\label{condDelta}
0<\de_1 < \de_2 < \de_1+\de_3.
\end{align}
and $\de_j$ so small that $\dD\subset [0, 1]^2$. We also write
\begin{align*}
d(x) = \de_2 - x_2
\end{align*}
which is the distance of the point $x$ to the top of the box. We write $\bde=(\de_1, \de_2), |\bde|^2=\de_1^2+\de_2^2$.

Define
\begin{align*}
M_{D}(t) := \max_{y\in D}\left\{\left|y_1^\al \frac{\d \om}{\d x_1}(y,t)\right|, \left|y_2^\al \frac{\d \om}{\d x_2}(y,t)\right|\right\}+\|\om\|_\infty
\end{align*}
and $M_{\dD}$ for the analogous quantity.
Note that $M_{D}$ and $M_{\dD}$ depend on $\om$ and $\al$. 


As mentioned before, the flow near the origin can be made hyperbolic, with compression 
in the $x_1$-direction and expansion in $x_2$-direction by choosing the initial data such that
$\om_0 \geq 0$ on $\ov$ and such that
\begin{align*}
\mathfrak{m} := |\{ x : \om_0(x) = \|\om_0\|_\infty \}|
\end{align*}
is sufficiently large. This is a consequence of theorem \ref{thmLowerboundQ2}.

\begin{remark}
\begin{enumerate}
\item [(a)] As a consequence of $\om=0$ on the coordinate axes we have the following important inequality
\begin{align}\label{omegaIneq}
|\om(y,t)|\ale M(x,t)\, y_j^{1-\al}\qquad(y_1,y_2\le \max\{x_1,x_2\} )
\end{align}
where $j=1,2$.
\item[(b)] The periodicity and double-oddness of $\om(\cdot, t)$ imply also the reflection symmetries
\begin{align*}
\om(1+x_1, x_2, t) = -\om(1-x_1, x_2, t), ~\om(x_1, 1+x_2)=-\om(x_1, 1-x_2).
\end{align*}
Consequently, the four corner points of $[-1, 1]\times [-1, 1]$ are also stagnant points
of the flow, the flow being confined in $\ov$. Hence $\om_0 \ge 0$ on $\ov$ implies $\om(x, t)\ge 0$ on $\ov$ for all times,
a fact we shall use below.
\end{enumerate}
\end{remark}

\begin{thm}\label{thmLowerboundQ2}
Suppose $\om_0(x)\ge 0$ on $\ov$. There exist universal $0<m_0<1$ and $0 < K$ such that if $m_0<\mathfrak{m}<1$, there are 
$\beta_0 > 0, A>0$ such that the following estimates hold for all times
\begin{align}\label{ineqthmLowerboundQ2}
\begin{split}
Q_2(x, t) + A M(x, t) |x|^{1-\al} \geq \beta_0 \\
Q_1(x, t) + A M(x, t) |x|^{1-\al} \geq \beta_0  
\end{split}
\end{align}
for $|x| \leq K (1-\mathfrak{m})$, i.e. the flow is hyperbolic near the origin.
\end{thm}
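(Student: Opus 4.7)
The strategy is to use preservation of the distribution function of $\om|_{\ov}$ under transport, together with positivity properties of the Biot--Savart kernels near the origin, while absorbing all local errors into the $AM(x,t)|x|^{1-\al}$ correction.

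\emph{Setup and axis estimate.} The Remark shows the flow leaves $\ov$ invariant, so transport preserves both $\om\geq 0$ on $\ov$ and the distribution function of $\om|_{\ov}$. In particular $S_t:=\{y\in\ov:\om(y,t)=\|\om_0\|_\infty\}$ satisfies $|S_t|=m$ for every $t\geq 0$. Moreover, since $\om$ vanishes on the coordinate axes and $|\d_{y_i}\om(y,t)|\leq M(x,t)\,y_i^{-\al}$ for $0\leq y_i\leq\max\{x_1,x_2\}$, integration from the axis yields
\[
|\om(y,t)|\,\leq\,\frac{M(x,t)}{1-\al}\min\{y_1^{1-\al},\,y_2^{1-\al}\}
\]
on $[0,\max\{x_1,x_2\}]^2$. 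Every contribution to $Q_i$ localised near the axes or near the origin will, via this bound, produce exactly the sought correction $AM(x,t)|x|^{1-\al}$.

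\emph{Bulk lower bound.} Direct inspection of \eqref{defQ} shows $G_1^2,G_2^1\geq 0$ on $\ov$, while $G_1^1,G_2^2\geq 0$ off axis-adjacent strips of width $\leq|x|$. On the bulk $|y|\geq c>0$ one has $G_i^1+G_i^2\gtrsim y_1 y_2/|y|^4$, which integrates to a strictly positive universal constant over $\ov$. Writing $\om=\|\om_0\|_\infty\mathbf{1}_{S_t}+(\om-\|\om_0\|_\infty\mathbf{1}_{S_t})$ and using $\om\geq 0$, the main bulk contribution
\[
c_0\|\om_0\|_\infty\int_{S_t\cap\{|y|\geq c\}}(G_i^1+G_i^2)(x,y)\,dy
\]
is bounded below by a universal $\beta_0>0$ once $m$ is close enough to $1$, because $S_t$ then fills up most of the bulk. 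The remaining contributions --- from the sign-indefinite strips, a small neighborhood of $y=x$, the set $\ov\setminus S_t$, and the far-field remainder $Q_i^r$ (whose kernel is smooth and bounded) --- are all estimated by $\ale M(x,t)|x|^{1-\al}$, and the constraint $|x|\leq K(1-m)$ with $K$ small guarantees that these errors do not consume $\beta_0$.

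\emph{Main obstacle.} The most delicate piece is the singular part of $G_2^2$ near $y=x$: it is only a principal-value kernel with local scale $1/(x_2|y-x|)$, so a crude $|G_2^2|$-bound is too weak when $x_2$ is small. This is resolved by combining the oddness of $G_2^2$ in $y_1-x_1$ around $y=x$ (the source of cancellation if $\ov\setminus S_t$ happens to concentrate near $x$) with the axis estimate above, which gives $|\om(y,t)|\leq CM(x,t)|x|^{1-\al}$ throughout a $|x|$-neighborhood of $x$. Together these convert the apparent principal-value blow-up into the admissible $M(x,t)|x|^{1-\al}$ correction. A mirror argument handles $G_1^1$ and yields the analogous estimate for $Q_1$.
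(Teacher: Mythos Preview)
Your proposal has a genuine gap. You claim that every error contribution --- in particular the far-field remainder $Q_i^r$ and the sign-indefinite strips --- is bounded by $M(x,t)|x|^{1-\al}$. This is not true. The kernel of $Q_i^r$ is smooth and bounded, but that only gives $|Q_i^r|\le C\|\om\|_\infty$; there is no factor of $|x|$ to make it small (indeed $Q_i^r(0,t)$ is a nonzero number in general, with contributions of both signs from the periodic images). Likewise, the strip integrals such as $\int_{[2x_1,1]\times[0,2x_2]} G_2^2\,\om$ extend far outside the little square $[0,\max\{x_1,x_2\}]^2$ on which your axis estimate $|\om|\le CM\min\{y_1,y_2\}^{1-\al}$ is valid, so you cannot pull out an $M|x|^{1-\al}$ there either; those strips give only an $O(\|\om\|_\infty)$ error.

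This matters because your bulk term, coming from a \emph{fixed} region $\{|y|\ge c\}$, is itself only $O(\|\om\|_\infty)$: you have no mechanism to make it dominate an $O(\|\om\|_\infty)$ error. The paper's approach is different precisely here. Its key lemma compares the full kernel to $y_1y_2/|y|^4$ on the $x$-dependent region $\Omega(2x)=[2x_1,1]\times[2x_2,1]$ and establishes
\[
Q_i(x)\;\ge\; c_0\int_{\Omega(2x)}\frac{y_1y_2}{|y|^4}\,\om(y,t)\,dy\;-\;M(x,t)|x|^{1-\al}\;-\;C_2\|\om\|_\infty,
\]
accepting an $O(\|\om\|_\infty)$ error. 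The point is that the main integral is \emph{logarithmically divergent} as $|x|\to 0$: following \cite{KiselevSverak,Zlatos}, for $|x|\le K(1-m)$ one gets $\int_{\Omega(2x)} y_1y_2|y|^{-4}\om\,dy\ge C_1\|\om\|_\infty|\log(1-m)|$, and it is this unbounded factor $|\log(1-m)|$ (not closeness of $|S_t|$ to $1$ in a fixed bulk) that beats the $C_2\|\om\|_\infty$ error once $m$ is close enough to $1$. Your fixed-$c$ bulk misses this logarithm entirely, and without it the inequality cannot close.
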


To prove this, we need the following Lemma, which is an adaption of a result in \cite{Zlatos}.
\begin{lem}
Let $\Om(2x) := [2 x_1, 1]\times[2 x_2, 1]$. Suppose $\om(x)\ge 0$ for $x\in \ov$. Then the estimate
\begin{align*}
Q_i(x,t) \geq c_0 \int_{\Om(2x)} \frac{y_1 y_2}{|y|^4}\om(y,t)~dy  - C_1 M(x,t) |x|^{1-\al}-C_2 \|\om\|_{\infty} \quad (x\in D,~i=1,2)
\end{align*}
holds, with universal $C_1, C_2 > 0$.
\end{lem}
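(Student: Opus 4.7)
The plan is to decompose the integral defining $Q_i$ into three pieces matching the three terms on the right-hand side, writing
\begin{equation*}
Q_i(x,t) = c_0 \int_{\Om(2x)}[G_i^1 + G_i^2]\om\,dy + c_0 \int_{\ov \setminus \Om(2x)}[G_i^1 + G_i^2]\om\,dy + Q_i^r(x,t).
\end{equation*}
The first integral will produce the main term, the second will be bounded in absolute value by $M(x,t)|x|^{1-\al}$, and the third will be absorbed into $C_2\|\om\|_\infty$.

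For the main term I would establish a pointwise kernel bound of the form $G_i^1(x,y) + G_i^2(x,y) \geq y_1 y_2/|y|^4$ on $\Om(2x)$ (up to absorbing a harmless universal factor into $c_0$). The key observation is that on $\Om(2x)$ one has $y_j \geq 2 x_j$, so $y_j \pm x_j \in [y_j/2, 3y_j/2]$, making each of the four distances $|y \pm x|$, $|y - \til x|$, $|y - \ba x|$ comparable to $|y|$. Expanding the kernels then yields the pointwise comparison with $y_1 y_2/|y|^4$, and since $\om \geq 0$ on $\ov$ by the preceding remark, this immediately gives the main lower bound.

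For the second integral the domain $\ov \setminus \Om(2x)$ is the L-shaped region $\{y_1 \leq 2x_1\} \cup \{y_2 \leq 2x_2\}$, which I would split into a neighborhood $B = \{|y-x| \leq c|x|\}$ of $x$ and its complement. Off $B$ the kernels $|G_i^j|$ are bounded in terms of the fixed length scales $|x|, x_1, x_2$ and the region has area $\ale |x|$, so this contribution is $\ale \|\om\|_\infty$ and absorbed into the $C_2\|\om\|_\infty$ term. On $B$ I would exploit the vanishing $\om(0, y_2) = \om(y_1, 0) = 0$: integrating the defining bound $|\d_{x_j}\om(y)| \leq M(x,t) y_j^{-\al}$ yields $|\om(y)| \ale M(x,t) \min(y_1^{1-\al}, y_2^{1-\al})$. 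Combining this with the behavior of $G_i^j$ on $B$ (where $|y - \til x| \sim x_1$ and $|y - \ba x| \sim x_2$) and integrating in polar coordinates around $x$ produces the claimed $\ale M(x,t)|x|^{1-\al}$ bound. Finally, $Q_i^r$ is an integral over $\R^2 \setminus \ov$ against the periodic extension of $\om$ and is bounded by $\ale \|\om\|_\infty$ using kernel decay and cancellations between periodic copies.

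The main technical obstacle is the near-$x$ estimate on $B$. The kernels $G_i^j$ are not smooth at $y = x$, and naive estimates produce factors like $1/x_1$ or $1/x_2$ which must be absorbed by the $y_j^{1-\al}$ gain coming from the vanishing of $\om$ on the corresponding axis. Care is needed to exploit whichever axis is closer so that the factors $y_1, y_2$ in the kernel numerator and the $y_j^{1-\al}$ bound on $\om$ cooperate; working with the combined kernel $G_i^1 + G_i^2$ rather than each piece separately also helps avoid an artificial logarithmic blowup from the sign change of $y_2-x_2$ in $G_i^1$. It is precisely here that the Hölder exponent $\al$ in the definition of $M(x,t)$, rather than a full derivative bound, is used in an essential way, and the exponent $1-\al$ in the claimed bound is exactly what this cancellation yields.
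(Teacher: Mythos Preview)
Your architecture matches the paper's---main term from $\Omega(2x)$, the L-shaped region $\ov\setminus\Omega(2x)$ controlled by $M(x,t)|x|^{1-\al}$ plus $\|\omega\|_\infty$, and $Q_i^r$ bounded---and your near-$x$ estimate on $B$ is essentially the paper's computation on its small square $[0,x_1]^2$. But the off-$B$ argument as you state it fails: on $(\ov\setminus\Omega(2x))\setminus B$ you only know $|y-x|\gtrsim|x|$, giving the kernel bound $|G_i^k|\ale|x|^{-2}$, and multiplying by the area $\sim|x|$ of the L-shape yields $|x|^{-1}$, not $O(1)$. The L-shape consists of thin strips of width $\sim x_j$ extending out to distance $1$, and one must actually integrate the decaying kernel along those strips to see the contribution is bounded. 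The paper does this via an explicit rectangular decomposition (into pieces like $[2x_1,1]\times[0,2x_2]$, $[0,2x_1]\times[2x_1,1]$, $[0,2x_1]\times[0,2x_1]$), with substitutions $y_j=x_j+z_j$ and computations producing bounded $\arctan$ and $\log$ terms.

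Your proposal to work with $G_i^1+G_i^2$ together is also a detour. The paper instead observes that one of the two kernels---$G_2^1$ for $i=2$, $G_1^2$ for $i=1$---is manifestly nonnegative on $\ov$, so since $\omega\geq 0$ its contribution can be dropped from the lower bound entirely; only $G_2^2$ (resp.\ $G_1^1$) need be estimated on the L-shape. The cancellation you hope for near $y=x$ does not occur: $G_2^1$ is regular there, its poles $-x$ and $\til x$ lying outside $\ov$, so it cannot compensate the singularity of $G_2^2$. The sign change of $y_1-x_1$ in $G_2^2$ is handled instead by using $\omega\geq 0$ once more to discard the subregion where $G_2^2\geq 0$. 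Finally, your pointwise lower bound on $\Omega(2x)$ only gives $G_i^1+G_i^2\geq c\,y_1y_2/|y|^4$ with some universal $c<1$, not the precise constant $c_0$ of the statement; the paper preserves the exact main term by showing $\int_{\Omega(2x)}|G_2^2-y_1y_2/|y|^4|\,|\omega|\,dy\ale\|\omega\|_\infty$, absorbing the discrepancy into $C_2\|\omega\|_\infty$.
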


\begin{proof}
We prove the result for $Q_2$, the proof for $Q_1$ is similar. We have
\begin{align*}
Q_2(x) \geq	&c_0 \int_{\Om(2x)} \frac{y_1 y_2}{|y|^4}\om(y)~dy + c_0 \int_{\Om(2x)} \left[G_2^2(x, y) -  \frac{y_1 y_2}{|y|^4}\right] \om(y)~dy\\
& + c_0 \int_{\ov\setminus \Om(2x)} G_2^2(x, y) \om(y)~dy - C_1 \|\om\|_{\infty},
\end{align*}
throwing away the nonnegative contribution from $G_2^1$ and estimating $Q_2^r$ by $C_1  \|\om\|_{\infty}$ for $x\in D$.
First, note that straightforward calculations and estimations give
\begin{align*}
\left|G_2^2(x, y)- \frac{y_1 y_2}{|y|^4}\right|= \frac{|y|^4y_2(y_1-x_1)-|y-x|^2|y-\bar x|^2y_1y_2}{|y|^4|y-x|^2|y-\bar x|^2}.
\end{align*}
Using 
\begin{align*}
|y-x|^2&=|y|^2-2x\cdot y +|x|^2\\
|y-\bar x|^2&=|y|^2-2\bar x\cdot y +|x|^2
\end{align*}
the nominator can be estimated by  
\begin{align*}
\sum_{j=1}^4 |x|^j|y|^{6-j}.
\end{align*}
For the denominator, note that
$y \in  \Om(2 x)$ implies that $|y-x|\geq \half|y|, |y-\bar x|\geq \half|y|$, i.e.~the denominator is $\gtrsim |y|^8$.
Hence, the integral over $\Om(2 x)$ is bounded in absolute value by
\begin{align*}
\|\om\|_\infty\sum_{j=1}^4|x|^j \int_{2\ge |y|\ge 2|x|} |y|^{-2-j}dy \ale 1.
\end{align*}
For the estimation of the integral with domain of integration $\ov\setminus \Om(2x)$, we distinguish two cases. 
The more difficult case is given by the condition $x_2 \leq x_1$, and we split the domain of integration into
the three parts $[2 x_1, 1]\times [0, 2x_2], [0, 2 x_1]\times[2 x_1, 1]$ and $[0, 2 x_1]\times [0, 2 x_1]$. 
For the integral over $[2 x_1, 1]\times [0, 2x_2]$, estimate $\om$ by its $L^\infty$-norm and in the remaining integral 
we substitute $y_j = x_j + z_j$. 
\begin{align*}
&\int_{x_1}^{1-x_1} \int_{-x_2}^{x_2} \frac{z_1(x_2+z_2)}{(z_1^2+z_2^2)(z_1^2+(2x_2+z_2)^2)} ~dz
\leq \int_0^1 \int_{-x_2}^{x_2} \frac{2 z_1 x_2}{(z_1^2+z_2^2)(z_1^2+x_2^2)} ~dz_2~dz_1\\
\,\,&\ale \int_0^1 \frac{z_1 x_2}{(z_1^2+x_2^2)} \frac{1}{z_1}\arctan(x_2/z_1)~ d z_1 \ale \arctan(1/x_2) \ale 1.
\end{align*}
 The same strategy for the integral over $[0, 2 x_1]\times[2 x_1, 1]$ leads to
\begin{align*}
\int\limits_{-x_1}^{x_1}\int\limits_{2x_1-x_2}^{1-x_2}  \frac{|z_1|(x_2+z_2)}{(z_1^2+z_2^2)(z_1^2+(2x_2+z_2)^2)}~dz
&\leq 2  \int\limits_{0}^{x_1}\int\limits_{x_1}^{1} \frac{z_1(x_2+z_2)}{(z_1^2+z_2^2)(z_1^2+(2x_2+z_2)^2)}~dz.
\end{align*}
Noting
\begin{align*}
&\int\limits_{0}^{x_1}\int\limits_{x_1}^{1} \frac{z_1 x_2}{(z_1^2+z_2^2)(z_1^2+(2x_2+z_2)^2)}~dz
\leq \int\limits_{0}^{x_1}\int\limits_{x_1}^{1} \frac{z_1 x_2}{(z_1^2+z_2^2)(z_1^2+x_2^2)}~dz_2~ dz_1\\
\,\,&\ale \int\limits_{0}^{x_1} \frac{x_2}{z_1^2+x_2^2}~d z_1 \ale\arctan(x_1/x_2)\ale 1
\end{align*}
and 
\begin{align*}
\int\limits_{0}^{x_1}\int\limits_{x_1}^{1} \frac{z_1 z_2}{(z_1^2+z_2^2)(z_1^2+(2x_2+z_2)^2)}~dz &\leq
\int\limits_{0}^{x_1}\int\limits_{x_1}^{1} \frac{z_1 z_2}{(z_1^2+z_2^2)^2}~dz \ale 1
\end{align*}
we can estimate the integral in question by $C\|\om\|_\infty$.

To estimate the integral over $[0, 2 x_1]\times [0, 2 x_1]$ first note that
\begin{align*}
&\int_{[0, 2 x_1]\times [0, 2 x_1]} G_2^2(x, y)\om(y)~dy\ge\int_{[0,  x_1]\times [0, 2 x_1]} G_2^2(x, y)\om(y)~dy.
\end{align*}
since $\om \ge 0$ and $G_2^2(x, y) \ge 0$ if $y_1 \le x_1$.
We will estimate the integral over $[0,  x_1]\times [0, 2 x_1]$ in absolute value, splitting it again into
$[0, x_1]\times [0, x_1]$ and $[0, x_1]\times [x_1, 2 x_1]$. First, writing $M = M(x,t)$ and using \eqref{omegaIneq} and \eqref{reflectionIneq} we get
\begin{align*}
&\left|\int_{[0, x_1]\times [0, x_1]} G_2^2(x, y)\om(y)~d y \right|\leq \int_{[0, x_1]\times [0, x_1]} \frac{M y_2^{1-\al}}{|y-x|
|y-\bar x|}~d y\\
 \leq &\int_{[0, x_1]\times [0, x_1]} \frac{M |y-\bar x|^{1-\al}}{|y-x|
|y-\bar x|}~d y \le \int_{[0, x_1]\times [0, x_1]} M |y-x|^{-1-\al}~dy\\
\leq &\int_{B(x, r)} M |y-x|^{-1-\al}~dy \leq M r^{1-\al}
\end{align*}
where $B(x, r)$ is the smallest ball around $x$ containing $[0, 2 x_1]\times [0, 2 x_1]$. Clearly $r \ale x_1$,
so the integral is $\ale M x_1^{1-\al}$.

Next, for the remaining part over $[0, x_1]\times [x_1, 2 x_1]$, we estimate $\om$ by $\|\om\|_{\infty}$. We need to bound
\begin{align*}
\int_{[0, x_1]\times [x_1, 2 x_1]} |G_2^2(x, y)|~d y& \leq \int_{-x_1}^0 \int_{x_1-x_2}^{2 x_1-x_2} \frac{|z_1| z_2}{(z_1^2+z_2^2)(z_1^2+(2x_2+z_2)^2)}~dz\\
&~~~+  \int_{-x_1}^0 \int_{x_1-x_2}^{2 x_1-x_2} \frac{|z_1| x_2}{(z_1^2+z_2^2)(z_1^2+(2x_2+z_2)^2)}~ dz.
\end{align*}
For the integral containing $|z_1| z_2$ we distinguish two cases.
In case $x_2 \le \half x_1$, we use $z_1^2+(2x_2+z_2)^2\ge z_1^2+z_2^2$, leading to a bound on the form $\log(1+\frac{x_1}{x_1-x_2})\le C$.
If $x_2 \ge \half x_1$, we use $z_1^2+(2x_2+z_2)^2\ge (x_2+z_2)^2$ in the denominator and $z_2 \le (z_2+x_2)$
in the nominator and get the bound $C x_2^{-1} x_1 \le C$. The integral with $|z_1|x_2$ is estimated as before.

If $x_1\leq x_2$, we split $\ov \setminus \Om(2 x)$ into $[0, 1]\times [0, x_2], [0, 2 x_1]\times[2 x_2, 1]$
and perform similar calculations. In this case, we do not need to use $M(x,t)$. 
\end{proof}

\begin{proof}[Proof of theorem \ref{thmLowerboundQ2}]
Following \cite{KiselevSverak, Zlatos} we observe that the integral 
\begin{align*}
\int_{\Om(2x)} y_1 y_2|y|^{-4}\om(y, t)~dy
\end{align*}
can be bounded
away from zero by an expression of the form $C_1\|\om\|_\infty |\log(1-\mathfrak{m})|$, for $|x|\leq K (1-\mathfrak{m})$.
with universal $C_1, K>0$. Hence we obtain \eqref{ineqthmLowerboundQ2}.
\end{proof}


\subsection{Upper bounds}

The following Lemma gives an upper bound on $Q_1, Q_2$, in terms of $M_{\dD}(t)$. Recall that $d(x)$ is the 
distance to the top of the box, so the upper bound given blows up close to the top of the box. This is, however
not a problem, since we mostly have to integrate $Q_1, Q_2$ along particle trajectories (see the proof 
Theorem \ref{mainTechnicalThm}).
\begin{lem}\label{lemUpperboundQ}
For $x\in D$, 
\begin{align*}
Q_i(x,t) \ale C \|\om\|_{\infty}(1+|\log d(x)|) +  M_{\dD} (t) (|\bde|+\de_3)^{1-\al}\quad (i=1, 2)
\end{align*}
\end{lem}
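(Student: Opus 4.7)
I focus on $Q_2$; the estimate for $Q_1$ is analogous by symmetry. The strategy is to split $\int_\ov(G_2^1+G_2^2)\om\,dy$ into contributions from the near box $\dD$, the far region $\ov\setminus\dD$, and the remainder $Q_2^r$ (integration over $\R^2\setminus\ov$, which is trivially bounded by $\|\om\|_\infty$ due to the regularity of the kernels there).

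For the near part, I use the vanishing of $\om$ on both coordinate axes together with the feeding-zone control $|\partial_{y_j}\om(y,t)|\le M_{\dD}(t)y_j^{-\al}$ to obtain, via the fundamental theorem of calculus, $|\om(y,t)|\ale M_{\dD}\min(y_1,y_2)^{1-\al}\ale M_{\dD}(y_1 y_2)^{(1-\al)/2}$ on $\dD$. For the kernel $G_2^1$, the estimates $|y+x|^2\ge 2(y_1+x_1)(y_2+x_2)$ (AM--GM) and $|y-\til x|\ge y_1+x_1$ give $|G_2^1|\ale[(y_1+x_1)(y_2+x_2)]^{-1}$; plugging in the symmetric bound on $\om$, the double integral factorizes and the elementary one-variable estimate $\int_0^R s^a(s+c)^{-1}\,ds\ale R^a$ (uniform in $c>0$, $a\in(0,1)$) yields $\ale M_{\dD}|\bde|^{1-\al}$. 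For $G_2^2$, using $|y-\ba x|\ge y_2+x_2$ and $|y_1-x_1|\le|y-x|$ gives $|G_2^2|\ale[|y-x|(y_2+x_2)]^{-1}$; combining with $|\om|\ale M_{\dD}y_2^{1-\al}$ reduces the problem to bounding $\int_\dD dy/(|y-x|y_2^\al)$, which H\"older's inequality with exponents $(p,p')$ satisfying $1<p<2$ and $\al p'<1$ (admissible since $\al<1/4$) bounds by $|\bde|^{1-\al}$.

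For the far part, I bound $|\om|\le\|\om\|_\infty$ and decompose $\ov\setminus\dD$ into the top strip $E_1=\{y_2\ge\de_2\}\cap\ov$ and the right strip $E_2=\{y_1\ge\de_1+\de_3,\,y_2<\de_2\}\cap\ov$. In $E_2$, all four distances $|y\pm x|,|y-\til x|,|y-\ba x|$ are comparable to $y_1+x_1\ge\de_3$, and a careful estimate (the pointwise $\de_3^{-k}$ factors being compensated by the reduced volume available) yields a uniform $O(\|\om\|_\infty)$ contribution. In $E_1$, where $y_2-x_2\ge d(x)$, I exploit the antisymmetric structure of $G_2^2$ in $y_1-x_1$: integrating $y_1$ first via the exact primitive $(y_1-x_1)/|y-x|^2=\partial_{y_1}\tfrac12\log|y-x|^2$ produces a bounded difference of logarithms, after which the remaining $y_2$-integral produces the factor $|\log d(x)|$.

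The main obstacle is the top-strip estimate. A naive pointwise bound $|G_2^2|\ale[|y-x|(y_2+x_2)]^{-1}$ followed by sequential $y_1,y_2$ integration produces an extra logarithmic factor, giving $|\log d(x)|^2$ instead of $|\log d(x)|$. The resolution is to integrate $y_1$ first so as to exploit the exact cancellation in the primitive; this yields a quantity (involving $1/x_2$ times a bounded log-difference) that remains controllable after the subsequent $y_2$-integration, producing a single clean $|\log d(x)|$.
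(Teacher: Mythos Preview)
Your argument is essentially correct but considerably more elaborate than the paper's. The paper uses a single unified kernel estimate
\[
|G_2^k(x,y)|\;\ale\;|y-x|^{-1}\,|y-\bar x|^{-1}\qquad(k=1,2),
\]
which handles both regions at once. On $\dD$ one combines this with $|\om(y)|\ale M_{\dD}\,y_2^{1-\al}$ and the trivial chain $y_2\le|y-\bar x|$, $|y-\bar x|\ge|y-x|$ to reduce the integrand to $M_{\dD}|y-x|^{-1-\al}$; integration over a ball of radius $\ale|\bde|$ gives the $M_{\dD}|\bde|^{1-\al}$ term directly---no separate AM--GM factorization for $G_2^1$, no H\"older step for $G_2^2$. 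On $\ov\setminus\dD$ the same kernel bound together with $|y-\bar x|\ge|y-x|$ yields $|G_2^k|\le|y-x|^{-2}$, and since the box geometry \eqref{condDelta} places $\ov\setminus\dD$ outside $B(x,d(x))$, integrating $|y-x|^{-2}$ over the annulus $B(x,10)\setminus B(x,d(x))$ produces a single $|\log d(x)|$ in one line.

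Your worry about a spurious $|\log d(x)|^2$ is therefore an artifact of carrying the sharper bound $|G_2^2|\ale[|y-x|(y_2+x_2)]^{-1}$ into the far region: the extra $(y_2+x_2)^{-1}$ factor is exactly what generates the second logarithm upon iterated integration. Switching to the cruder $|y-x|^{-2}$ removes the obstacle entirely, so your partial-fraction/mean-value treatment of the top strip---which does work once one observes that the $1/x_2$ cancels against the $O(x_2)$ size of the log-difference---is unnecessary. Your route reaches the same destination, but the paper's is shorter and more transparent.
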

\begin{proof}
We bound $Q_2$, the calculation for $Q_1$ is analogous. First we note
\begin{align*}
|G_2^k| \ale |y-x|^{-1}|y-\bar x|^{-1} \quad (k=1, 2)
\end{align*}
for $y, x\in \ov$. We write $M=M_{\dD}(t)$, and split the integral in the definition of $Q_2$ into 
into two parts:
\begin{align*}
\int_{\ov} G_2^k (x, y) \om(y)~dy = \int_{\dD}\ldots +   \int_{\ov \setminus \dD}\ldots
\end{align*} 
Since $|\om(y)|\ale M y_2^{1-\al}$ and $y_2 \leq |y-\bar x|$,
\begin{align*}
&\left|\int_{\dD} G_2^k(x, y)\om(y)~dy \right|\ale M \int_{\dD} y_2^{1-\al}  |y-x|^{-1}|y-\bar x|^{-1}~dy \\
&\,\,\ale M \int_{\dD} |y-x|^{-1}|y-\bar x|^{-\al} ~dy \ale M \int_{\dD} |y-x|^{-1-\al} ~dy\\
&\,\,\leq M \int_{B(x, r)} |y-x|^{-1-\al} ~dy \ale M r^{1-\al} 
\end{align*}
where $B(x, r)$ is the smallest ball centered at $x$ containing $\dD$. Obviously $r \ale |\bde|+\de_3$, so the part 
over $\dD$ is dominated by $M (|\bde|+\de_3)^{1-\al}$.

For the part over $\ov \setminus \dD$, we have
\begin{align*}
&\left|\int_{\ov\setminus \dD} G_2^k (x, y) \om(y)~dy\right| \ale \|\om\|_{\infty} \int_{\ov\setminus \dD} |y - x|^{-2} ~dy \\
&\,\,\ale \|\om\|_{\infty} \int\limits_{B(x, 10)\setminus B(x, d(x))} |y - x|^{-2}~ dy 
\ale \|\om\|_{\infty}|\log d(x)|
\end{align*} 
where we have used $|G_2^k| \ale |x-y|^{-1}|y-\bar x|^{-1}$ and $|y - \bar x|\geq |y - x|$ for $x, y\in \ov$. Note also that for $x\in D$,
$\ov \setminus \dD$ is completely contained in $B(x, 10)\setminus B(x, d(x))$ because of \eqref{condDelta}.

For $Q_i^r$ we have the estimate $|Q_i^r(x, t)|\leq C \|\om\|_\infty$, concluding the proof.
\end{proof}
The following important Lemma allows us to control the coefficients of the ODE system \eqref{ode2} in terms of
the quantity $M_{\dD}$. Recall that $d(x)$ is the distance from $x\in D$ to the top of the box.

\begin{lem}\label{lemFor_c_b}
We have the following estimates for $x\in D$:
\begin{align*}
|c(x,t)|& \le C(\al) M_{\dD}(t)x_2^{1-\al} 
+ C(\al, \ga_1, \ga_2) x_2^{1-\ga_1-\ga_2} x_1^{\ga_2} d(x)^{-1+\ga_1},\\
|b(x,t)| &\le C(\al) M_{\dD}(t)x_1^{1-\al}(1+|\log d(x)|) + C(\al, \ga) x_1^{1-\ga} d(x)^{-1+\ga},\\
\left|x_i \frac{\d Q_i}{\d x_i}(x,t) \right|&\le C(\al) M_{\dD}(t)  x_i^{1-\al}(1+|\log d(x)|) + C(\al, \ga) x_i^{1-\ga} d(x)^{-1+\ga}
\end{align*}
where $\ga, \ga_1, \ga_2\in (0, 1), \ga_1+\ga_2 < 1$, $i=1, 2$ and the constants do not depend on $\de_1, \de_2, \de_3,t$.
\end{lem}
\begin{proof}
This is a consequence of Proposition \ref{prop:A4} (see appendix) and the definition of $c, b, x_i \d_{x_i} Q_i$ (see \eqref{def:abc}). Note that we have
\begin{align*}
\left|\frac{\d Q_i^r}{\d x_j}(x,t)\right|\leq C \|\om\|_{\infty} 
\end{align*}
for $x\in D$. When we estimate e.g. $c$, we encounter a term of the form $x_2 \frac{\d Q_2^r}{\d x_1}(x,t)$, obtaining a bound of
the form $C x_2 \|\om\|_{\infty}$, which can be absorbed into 
\begin{align*}
C(\al) M_{\dD}(t)x_2^{1-\al}.
\end{align*} 
\end{proof}

\begin{remark}\label{remAlpha}
It is not possible to set $\al = 0$ in the estimates of Lemma \ref{lemFor_c_b}, i.e.~if we replace $M_{\dD}$ by $\|\nabla \om\|_{D, \infty}$, then
e.g.~the first term on the right-hand side of the estimate for $c$ would contain a logarithmic expression
\begin{align*}
\|\nabla \om\|_{D, \infty} x_2 |\log x_2|.
\end{align*}  
This is the main reason why we do not adopt the stronger feeding condition \eqref{eqfeeding2}, since our main argument cannot be applied to this kind of logarithmic terms.
\end{remark}

\section{Perturbation theory for a system of ordinary differential equations}\label{sec:perturbation}

In this section we derive estimates for an ODE system of the form
\begin{align*}
\dot\bxi(t) = \left(\begin{array}{cc} a(t) & c(t) \\ b(t) & -a(t)\end{array}\right) \bxi(t)
\end{align*}
where $a, b, c$ are given smooth functions on a time interval $[T_0, T_e]$. 
This part is independent of the actual structure of $a, b, c$ from the ODE \eqref{ode2}. 

The idea will be to perturb from the system with $c\equiv 0$, which can be solved
explicitly. We write
\begin{align*}
 P(t):=\left(\begin{array}{cc} a(t) & 0 \\ b(t) & -a(t)\end{array}\right),  ~~S(t):=\left(\begin{array}{cc} 0 & c(t) \\ 0 & 0\end{array}\right).
\end{align*}
\begin{definition}
Let the integral operators $\hP, \hS$ be given by
\begin{align*}
(\hP \bxi)(t) = \int_{T_0}^t P(s)\bxi(s)~ds, ~~(\hS \bxi)(t)=\int_{T_0}^t S(s) \bxi(s) ~ds.
\end{align*}
\end{definition}
Recall that $\displaystyle A(t)=\int_{T_0}^t a(s)~ds$. It is convenient to introduce the following operators: 
\begin{align*}
(F^+ g)(t) &= g(t) + e^{A(t)} \int_{T_0}^t a(s) e^{-A(s)} g(s)~ ds,\\
(F^- g)(t) &= g(t) - e^{-A(t)} \int_{T_0}^t a(s) e^{A(s)} g(s)~ ds.
\end{align*}
\begin{prop}
\begin{enumerate}
\item[(a)] The operator $(I-\hP)$ is bounded and bijective as an operator from $C[T_0, T]$ into $C[T_0, T]$.
\item[(b)] Consider the Volterra integral equation 
\begin{align}\label{ode_eq1}
\phi = \hP \phi + g
\end{align}
with given $g\in C([T_0, T], \R^2)$. The solution  $\phi = (I-\hP)^{-1} g$ is given by
\begin{align}\label{ode_eq3}
\begin{split}
\phi_1(t) &=  F^+ g_1\\
\phi_2(t) &= F^- g_2 + \eAm \int_{T_0}^t \eAp\, b\, F^+ g_1~ds
\end{split}
\end{align}
\end{enumerate}
\end{prop}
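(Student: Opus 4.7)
The plan is to treat part (a) as a standard Volterra-series argument and part (b) as a two-step decoupling of the system, with the explicit formulas verified by direct substitution.

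For part (a), I would first observe that $\hP$ maps $C([0,T],\R^2)$ into itself, with the crude bound
\begin{equation*}
\|\hP \bxi\|_\infty \le T \sup_{t\in[0,T]}\|P(t)\| \cdot \|\bxi\|_\infty,
\end{equation*}
so $\hP$ is a bounded linear operator. The standard Volterra iteration then gives $\|\hP^n \bxi\|_\infty \le (T\sup\|P\|)^n \|\bxi\|_\infty/n!$, so the Neumann series $\sum_{n\ge 0} \hP^n$ converges in operator norm. Its sum is the two-sided inverse of $I-\hP$, establishing bijectivity.

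For part (b), the key observation is that the system $\phi = \hP \phi + g$ is lower-triangular in the dependent variable: the first component
\begin{equation*}
\phi_1(t) = g_1(t) + \int_0^t a(\tau)\phi_1(\tau)\,d\tau
\end{equation*}
decouples entirely, while the second is a scalar Volterra equation for $\phi_2$ once $\phi_1$ is known. Writing $\phi_1 = g_1 + \psi_1$ with $\psi_1(0)=0$ gives $\psi_1' - a\psi_1 = ag_1$, and the integrating factor $e^{-A}$ yields the formula $\phi_1 = F^+ g_1$ claimed in the proposition. For the second component I would then treat
\begin{equation*}
\phi_2(t) = g_2(t) + \int_0^t b(\tau)\phi_1(\tau)\,d\tau - \int_0^t a(\tau)\phi_2(\tau)\,d\tau
\end{equation*}
by the same integrating-factor technique (this time with factor $e^{A}$), obtaining first a form of $\phi_2$ involving $h := g_2 + \int_0^t b\phi_1$ and then separating the contributions of $g_2$ and of $b\phi_1$.

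The one step that needs a little care is that $g$ is only assumed continuous, so one cannot simply differentiate the candidate formula. I would handle this by verifying the formula via substitution instead of differentiation: plugging the claimed expression for $\phi_2$ into the right-hand side of the Volterra equation and using the identity $\tfrac{d}{d\tau}e^{-A(\tau)} = -a(\tau)e^{-A(\tau)}$ to integrate by parts in the double integrals $\int_0^t a e^{-A}\!\int_0^\tau e^A a g_2 \,ds\,d\tau$ and $\int_0^t a e^{-A}\!\int_0^\tau e^A b\phi_1 \,ds\,d\tau$. The boundary terms produce exactly the non-integrated $F^-$ and transport pieces, and the remaining terms collapse to reproduce the defining Volterra identity. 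This verification needs no regularity of $g$ beyond continuity, since only $e^{\pm A}$ is differentiated. Combined with part (a), which guarantees uniqueness, this identifies $(I-\hP)^{-1}g$ with the stated expression.
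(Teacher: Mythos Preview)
Your proposal is correct and follows essentially the same approach as the paper, which simply records that (a) is standard Volterra theory and that (b) is the calculation one gets by viewing $\phi=\hP\phi+g$ as the ODE $\dot\phi=P\phi+\dot g$ for $g\in C^1$. Your treatment is in fact more complete: you spell out the Neumann-series argument for (a) and, for (b), you handle the passage from $C^1$ to merely continuous $g$ explicitly by verifying the formula via substitution and integration by parts against $e^{\pm A}$, a point the paper leaves implicit.
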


\begin{proof}
The statement (a) is standard. Statement
(b) is an easy calculation, noting that \eqref{ode_eq1} is equivalent to the ODE system $\dot \bxi = P\bxi + \dot g$
for $g\in C^1$.
\end{proof}
The initial value problem for the system 
\begin{align*}
\dot \bxi = (P+S)\bxi, \quad \bxi(T_0)~\text{given}
\end{align*}
is equivalent to the Volterra integral equation
\begin{align}\label{ode_perturbedproblem}
\bxi = (\hP+\hS)\bxi + \bxi(T_0).
\end{align}
We can write $\bxi = (I-\hP)^{-1} \bw$ for some $\bw\in C[T_0, T]$. This leads to
\begin{align}\label{ode_eq2}
\bw = \hS (I-\hP)^{-1}\bw + \bxi(T_0).
\end{align}
The following proposition gives a representation of the solution $\bxi$ in terms of $\bw$:
\begin{prop}
Let $\bxi\in C[T_0, T]$ solve the integral equation \eqref{ode_perturbedproblem} with given $\bxi(T_0)$. Then
\begin{align}\label{relWXi}
\begin{split}
\xi_1(t) &= (F^+ w_1)(t), ~~\xi_2(t) = \xi_2(T_0)\eAm + \eAm \int_{T_0}^t \eAp\, b\, F^+ w_1~ ds\\
w_1(t) &= \xi_1(T_0)+\xi_2(T_0) \int_{T_0}^t \eAm \,c~ ds + \int_{T_0}^t \eAm c \int_{T_0}^s \eAp\, b\,F^+ w_1~ d\tau~ds,\\
w_2(t) &= \xi_2(T_0)
\end{split}
\end{align}
\end{prop}

\begin{proof}
First note that 
\begin{align}\label{ode_eq6}
\hS (I-\hP)^{-1} \bw &= \hS \bxi = (\int_{T_0}^t c(s) \xi_2(s)~ ds, 0)
\end{align}
and hence by \eqref{ode_eq2}, $w_2(t) = \xi_2(T_0)$.
It is easy to compute $F^-w_2$:
\begin{align}\label{Fminusw2}
F^-w_2&=F^-\xi_2(T_0)=\xi_2(T_0)e^{-A}.
\end{align}
Recalling $\bxi = (I-\hP)^{-1} \bw$ we get from \eqref{ode_eq1} and \eqref{ode_eq3} with $g=\bw$ and $\phi=\bxi$ and using \eqref{Fminusw2}
\begin{align}\label{eqxi1xi2}
\left(\begin{array}{c} \xi_1\\ \xi_2 \end{array}\right)&= \bxi=(I-\hP)^{-1} \bw=\left(\begin{array}{l}F^+ w_1 \\ \xi_2(T_0)e^{-A} + \eAm \int_{T_0}^t \eAp\, b\, F^+ w_1~ds \end{array}\right)
\end{align}
which is the first line of \eqref{relWXi}.
From \eqref{ode_eq6} follows
\begin{align*}
\left(\begin{array}{c} w_1\\ w_2 \end{array}\right)&= \hS\bxi+\bxi(T_0)=\left(\begin{array}{c}\int_{T_0}^tc\,\xi_2~ds+\xi_1(T_0) \\ \xi_2(T_0)\end{array}\right).
\end{align*}
Together with \eqref{eqxi1xi2} we get
\begin{align*}
w_1&=\int_{T_0}^tc\,\xi_2~ds+\xi_1(T_0)=\int_{T_0}^tc\,\left(\xi_2(T_0)e^{-A} + \eAm \int_{T_0}^s \eAp\, b\, F^+ w_1~d\tau\right)~ds+\xi_1(T_0)\\
&= \xi_1(T_0)+\xi_2(T_0) \int_{T_0}^t \eAm \,c~ ds + \int_{T_0}^t \eAm c \int_{T_0}^s \eAp\, b\,F^+ w_1~ d\tau~ds
\end{align*}
\end{proof}
We will need the following Gronwall-type inequality by Wilett \cite{Wilett, Walter}:
\begin{lem}\label{lem_Willet}
Let $z, f_0, f_1, f_2, v_1, v_2$ are nonnegative, integrable functions on $[T_0, T]$ and suppose $z$ satisfies the following
integral inequality:
\begin{align*}
z(t) \leq f_0(t) + f_1(t) \int_{T_0}^t v_1(s) z(s)~ds + f_2(t) \int_{T_0}^t v_2(s) z(s)~ds. 
\end{align*}
Then $z \leq H f_0 $, where $H$ is the following functional 
\begin{align}\label{eq_H}
\begin{split}
(H f_0)(t) = & f_0 + f_1 \expo{\intto v_1 f_1}\intto v_1 f_0\\
&+\left[f_2(t)+f_1(t)\expo{\intto v_1 f_1}\intto{v_1 f_2}\right]\\
&\quad\times \expo{\intto v_2\left[f_2(s)+f_1(s)\expo{\intso v_1 f_1}\intso{v_1 f_2}\right]}\\
&\qquad\times \intto v_2 \left[f_0(s)+f_1(s)\expo{\intso v_1 f_1}\intso{v_1 f_0}\right]
\end{split}
\end{align}
We write $H f_0$ to emphasize the linear dependency on $f_0$.
\end{lem}
\begin{proof}
We give the proof for reference. Recall first the following basic form of Gronwall's integral inequality \cite{Walter}. Suppose
$z, r, f_1, v_1$ are nonnegative functions on $[T_0, T]$ satisfying the integral inequality
\begin{align*}
z(t) \leq r(t) + f_1(t)\intto v_1 z~ds,
\end{align*}
then
\begin{align}\label{ode_eq5}
z(t) \leq r(t)+f_1(t) \expo{\intto v_1 f_1}\intto v_1 r~ds~~\quad (t\in [T_0, T]).
\end{align}
Set $r = f_0 + f_2 \intto v_2 z$ and apply \eqref{ode_eq5}. This leads to the following bound for $z$:
\begin{align}\label{ode_ineq3}
z(t) \leq f_0 + f_2 \intto v_2 z +f_1(t) \expo{\intto v_1 f_1}\intto v_1 \left[f_0 + f_2 \intso v_2 z\right].
\end{align}
Note that 
\begin{align*}
\intto v_1 f_2 \intso v_2 z & \leq \left(\intto v_1 f_2\right)\intto v_2 z
\end{align*}
since $v_1, f_2, z, v_2 \geq 0$. Thus \eqref{ode_ineq3} implies
\begin{align*}
z(t) &\leq f_0 + f_1 \expo{\intto v_1 f_1} \intto v_1 f_0 + \\
& +\left[f_2(t)+ f_1(t) \expo{\intto v_1 f_1}\left(\intto v_1 f_2\right)\right] \intto v_2 z. 
\end{align*}
Applying \eqref{ode_eq5} again, this time with $r = f_0 + f_1 \expo{\intto v_1 f_1}\intto v_1 f_0$, yields
the result \eqref{eq_H}.
\end{proof}
\begin{lem}
Let $\bxi\in C[T_0, T]$ solve the integral equation \eqref{ode_perturbedproblem} with given $\bxi(T_0)$. Then
the estimates 
\begin{align}\label{ode_ineq5}
\begin{split}
|\xi_1(t)| &\leq (H f_0)(t)+ \eAp \intto v_1 H f_0\\
|\xi_2(t)| &\leq |\eAm \xi_2(T_0)| + \eAm \left[ \intto v_2 H f_0 + \intto e^{2 A} |b| \intso v_1 H f_0\right],
\end{split}
\end{align}
hold, where $H$ is the functional \eqref{eq_H} and where
\begin{align*}
&f_1(t) = \intto \eAm |c| \intso e^{2 A} |b|,
&f_2(t)= \intto \eAm |c|,\\
&f_0(t) = |\xi_1(T_0)|+f_2(t) |\xi_2(T_0)|,
&v_1(t) = |a(t)| \eAm,\\
&v_2(t) = |b(t)| \eAp.
\end{align*}
\end{lem}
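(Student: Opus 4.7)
\smallskip

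\textbf{Proof plan.} The plan is to derive a Wilett-type integral inequality for $|w_1|$ from the representation formulas in the previous proposition and then convert bounds on $|w_1|$ into bounds on $|\xi_1|, |\xi_2|$ by again invoking those formulas. From the previous proposition,
\[
w_1(t) = \xi_1(0) + \xi_2(0) \intt \eAm c \, ds + \intt \eAm c \ints \eAp b\, (F^+ w_1)(\tau)\, d\tau\, ds,
\]
and unpacking the definition of $F^+$ one finds
\[
\intt \eAm c \ints \eAp b\, (F^+ w_1)(\tau)\, d\tau\, ds = \intt \eAm c \ints \eAp b\, w_1(\tau)\, d\tau\, ds + \intt \eAm c \ints e^{2A} b \int_0^\tau a \eAm w_1(\sigma)\, d\sigma\, d\tau\, ds.
\]
Taking absolute values, the whole task reduces to recognizing the two integral terms as fitting into the form $f_1(t)\intt v_1 |w_1|$ and $f_2(t)\intt v_2 |w_1|$ respectively with the $f_j,v_j$ given in the statement.

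The second term on the right is easy: by switching the order of integration in $0\le \tau\le s\le t$ one gets
\[
\intt \eAm |c| \ints \eAp |b|\,|w_1(\tau)|\,d\tau\,ds = \intt \eAp |b|\,|w_1(\tau)|\int_\tau^t \eAm |c|\, ds\, d\tau \le f_2(t) \intt v_2 |w_1|.
\]
For the triple integral, the key step — and what I expect to be the main technical obstacle — is to carry out two successive changes in the order of integration while retaining the $t$-dependence of the prefactor. Fixing $s$ first and exchanging $\sigma$ and $\tau$ inside $0\le\sigma\le\tau\le s$ yields, after bounding $\int_\sigma^s\le\int_0^s$,
\[
\intt \eAm |c| \ints e^{2A} |b| \int_0^\tau |a|\eAm |w_1|\,d\sigma\, d\tau\,ds \le \intt \eAm(s)|c(s)|\!\left[\ints e^{2A}|b|\, d\tau\right] \!\ints |a|\eAm |w_1|(\sigma)\,d\sigma\,ds.
\]
Setting $g(s):=\eAm(s)|c(s)|\ints e^{2A}|b|\,d\tau$, a second change of order in $0\le\sigma\le s\le t$ converts this into $\int_\sigma^t g(s)\,ds\le \int_0^t g = f_1(t)$, which gives the desired bound $f_1(t)\intt v_1|w_1|$.

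Combining, we obtain the Wilett-type inequality \eqref{ode_ineq1} for $z=|w_1|$, so Lemma~\ref{lem_Willet} yields $|w_1|\le H f_0$. To finish, we invoke the last two identities of \eqref{relWXi}. The identity $\xi_1=F^+ w_1$ gives
\[
|\xi_1(t)| \le |w_1(t)| + \eAp\intt |a|\eAm |w_1|\, ds \le (Hf_0)(t) + \eAp\intt |a|\eAm (Hf_0)\, ds,
\]
which is the first line of \eqref{ode_ineq5}. For $\xi_2$, inserting $F^+ w_1 = w_1 + \eAp \int_0^\cdot a\eAm w_1$ into $\xi_2(t) = \xi_2(0)\eAm + \eAm \intt \eAp b (F^+ w_1)\,ds$ and taking absolute values produces exactly the second line of \eqref{ode_ineq5}. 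No further Gronwall step is needed after the bound on $|w_1|$ is in hand; the main work is the Fubini juggling above, organized so that the auxiliary functions $f_0,f_1,f_2,v_1,v_2$ emerge with the precise form required by Lemma~\ref{lem_Willet}.
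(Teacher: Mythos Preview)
Your argument is correct and follows the same overall route as the paper: derive a Wilett-type inequality for $|w_1|$ from \eqref{relWXi}, apply Lemma~\ref{lem_Willet}, and then read off the bounds on $|\xi_1|,|\xi_2|$ from $\xi_1=F^+w_1$ and the formula for $\xi_2$. The only difference is that the ``Fubini juggling'' you flag as the main obstacle is unnecessary: since all integrands are nonnegative, one may simply enlarge the innermost upper limits to $t$ and pull the resulting $t$-only quantities out, e.g.\ $\int_0^\tau |a|\eAm|w_1|\le \int_0^t |a|\eAm|w_1|$ and $\int_0^s \eAp|b||w_1|\le \int_0^t \eAp|b||w_1|$, which immediately yields $f_1(t)\intt v_1|w_1|$ and $f_2(t)\intt v_2|w_1|$ without any change in the order of integration.
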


\begin{proof}
Using obvious estimations, we get from \eqref{relWXi} the following integral inequality for $|w_1|$:
\begin{align*}
|w_1(t)|&\leq |\xi_1(T_0)| + |\xi_2(T_0)| \intto \eAm |c|+\intto \eAm |c| ds ~ \intto \eAp |b| |w_1| ds\\
&+  \intto \eAm |c| \intso e^{2 A} |b| d\tau ds~ \intto |a|\eAm |w_1|\\
&= f_0(t) + f_1(t) \intto v_1 |w_1|ds + f_2(t) \intto v_2 |w_1|ds, 
\end{align*}
where the expressions $f_0, f_1, f_2, v_1, v_2$ are given as in the statement of the Lemma.
Now using Lemma \ref{lem_Willet}, we obtain $|w_1(t)|\leq H f_0$ on $[T_0, T]$. The inequalities 
\eqref{ode_ineq5} follow from the formulas \eqref{ode_eq3}.
\end{proof}

\begin{remark}
The reader might wonder why we did not perturb from a diagonal system, i.e. regard also
$b$ as a perturbation like $c$ as in the heuristic discussion. 
While it is certainly possible to derive corresponding perturbation formulas for $\xi_1, \xi_2$,
it turns out that the balance of growing and decaying factors is not favorable for the
arguments in Section \ref{sec:main}. Fortunately, the perturbation from the tridiagonal system 
behaves in a more stable way.  
\end{remark}

\section{Main argument}\label{sec:main}

\subsection{The main technical result}
In order to formulate our main technical result, we introduce a notion of {\em harmless nonlinear bound}.

\begin{definition}
A function  $\cN = \cN(R, \beta, \al, \bde, M)$ where all arguments are nonnegative numbers
 is a {\em harmless nonlinear function} if for fixed $\al\in (0, 1), \beta>0$ the following holds: 
For any given $R > 0$, there exists  $\bar \de_2(R) > 0$ and a number $\bar \de_1=\bar \de_1(R, \bar \de_2)>0$
such that for all $\de_2 \leq \bar \de_2, \de_1 \leq \bar \de_1$ the inequality
\begin{align*}
\cN(R, \beta, \al, \bde, R) < R
\end{align*} 
holds.
\end{definition}

Recall the box $\dD$ is said to satisfy the conditions of {\em controlled feeding} if there is a $R \geq 0$ with
\begin{align*}
|\d_{x_1}\om(x, t)|\leq R x_2^{1-\al}, ~~|\d_{x_2}\om(x, t)|\leq R \quad (x\in \dD\setminus D)
\end{align*} 
for all times $t \geq 0$. $R$ is called \emph{feeding parameter}. For convenience, we introduce the following definition.
\begin{definition}
Let $T > 0, \beta > 0$. We say that the flow is $\beta$-hyperbolic in the box $D$ on $[0, T]$ if 
\begin{align*}
Q_i(x, t) \geq \beta \quad (x\in D, \,t\in [0, T], i=1, 2).
\end{align*}
\end{definition}
\begin{thm}\label{mainTechnicalThm}
Let $0 < \al < 1/4$. There exists a harmless nonlinear function $\cN = \cN(R, \beta, \al, \bde, M)$ (determined by
a-priori known data) with the following properties.
If $\om$ is a solution of the Euler equation, $\dD$ a box defined by \eqref{defD} with parameters $\de_1, \de_2, \de_3 > 0$
satisfying \eqref{condDelta} and $T > 0$ is such that 
\begin{enumerate}
\item[(i)] the flow is $\beta$-hyperbolic in the box $D$ on the time interval $[0, T]$, 
\item[(ii)] the box $\dD$ is satisfies the conditions of {\em controlled feeding} with parameter $R>\|\om\|_\infty$,
\item[(iii)] the initial data satisfies,
\begin{align*}
M_{D}(0) < R,~~\left|\frac{\d \om_0}{\d x_1}(x)\right| \le R x_2^{1-\al},~~\left|\frac{\d \om_0}{\d x_2}(x)\right|\leq R\qquad (x \in D),
\end{align*}
\item[(iv)] there exists a number $K$ such that
\begin{align*}
M_{D}(t) \leq K  \quad (t\in [0, T]), 
\end{align*}
\end{enumerate}
then
\begin{align*}
M_{D}(t) \leq \cN(R, \al, \beta, \bde, K) \quad (t\in [0, T])
\end{align*}
holds.
\end{thm}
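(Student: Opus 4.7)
The plan is a particle-trajectory argument. Fix $(x,t)$ with $x\in D$ and let $\bX(s)$, $s\in[0,t]$, be the Lagrangian trajectory with $\bX(t)=x$. Set $T_0\in[0,t]$ equal to $0$ if $\bX(0)\in D$, otherwise the latest time with $\bX(T_0)\in\d D\cap\dD$. At $T_0$ the gradient $\bxi(s):=\nabla\om(\bX(s),s)$ satisfies, by (iii) or by (ii) respectively,
\begin{align*}
|\xi_1(T_0)|\le R\,X_2(T_0)^{1-\al},\qquad |\xi_2(T_0)|\le R.
\end{align*}
Along $\bX$, $\bxi$ evolves by the ODE \eqref{ode2}, whose coefficients $a=Q_1+X_1\d_{x_1}Q_1=Q_2+X_2\d_{x_2}Q_2$ (by $\nabla\cdot u=0$), $b=X_1\d_{x_2}Q_1$, $c=-X_2\d_{x_1}Q_2$ fit the framework of Section 5.

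Combining (ii), (iv), and controlled feeding, $M_{\dD}(s)\le K':=\max(R,K)$ for $s\in[0,T]$. By Lemma \ref{lemFor_c_b}, $|X_i\d_{x_i}Q_i|$ is small relative to $\beta$ for $\de_1,\de_2$ small (with $\de_3$ fixed), so combined with (i) we obtain $a(s)\ge\beta/2$ along $\bX$. The particle ODEs $\dot X_1=-X_1Q_1$, $\dot X_2=X_2Q_2$ give
\begin{align*}
\expo{A(t)-A(s)}=\frac{X_2(t)}{X_2(s)}\cdot(1+o(1))=\frac{X_1(s)}{X_1(t)}\cdot(1+o(1))
\end{align*}
as $|\bde|\to 0$, the $o(1)$ error being $\expo{\int_s^t X_i\d_{x_i}Q_i}-1$. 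This is the cancellation mechanism that tames the apparent exponential growth of $\xi_1$: using it twice,
\begin{align*}
X_1(t)^\al\,\expo{A(t)-A(T_0)}\,X_2(T_0)^{1-\al}&\le X_1(T_0)^\al X_2(t)^{1-\al}(1+o(1))\\
&\le (\de_1+\de_3)^\al\de_2^{1-\al}(1+o(1)),
\end{align*}
which is the key harmless nonlinear quantity controlling $X_1(t)^\al|\xi_1(t)|$.

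Next, applying the perturbation bounds \eqref{ode_ineq5}, I would estimate $|\xi_i(t)|$ in terms of $f_0,f_1,f_2,v_1,v_2$ and Willet's functional $H$. By Lemma \ref{lemFor_c_b}, each of $f_1,f_2,\int v_1 f_0,\int v_2 f_0$ is bounded by an explicit harmless nonlinear expression in $\bde,K'$, so $Hf_0\ale f_0$. Combined with the substitution above this produces $X_1(t)^\al|\xi_1(t)|\le\cN_1(R,\al,\beta,\bde,K')$. For $\xi_2$ the leading term $\eAm\xi_2(T_0)$ is handled by $X_2(t)^\al\eAm\le\de_2^\al(1+o(1))$ (using $X_2(t)=X_2(T_0)\expo{\int Q_2}$ and $A\approx\int Q_2$), yielding $X_2(t)^\al|\xi_2(t)|\le\cN_2(R,\al,\beta,\bde,K')$. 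Taking $\cN=\max(\cN_1,\cN_2)$ gives the theorem.

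The principal technical difficulty is the cross-coupling $b\not\equiv 0$: the bound for $\xi_2$ contains $\eAm\int e^{2A}b\int\eAm|a|f_0$, in which $\xi_1$ has already been amplified by $e^A$. This is controlled by (a) the $X_1$-prefactor in $b$ combined with $X_1(s)=X_1(T_0)\expo{-\int_{T_0}^s Q_1}$, which absorbs the rogue $e^{2A}$; and (b) Willet's Lemma \ref{lem_Willet}, which converts the nonlinear integral inequality for $|w_1|$ coming from \eqref{relWXi} into the explicit functional $Hf_0$. The logarithmic factors $1+|\log d(\bX(s))|$ from Lemma \ref{lemFor_c_b} are integrable along trajectories because $d(\bX(s))$ only becomes small near the exit time $T_e$, while $X_2$ grows exponentially at rate $\ge\beta$; the resulting contribution is polynomially dominated by the exponentials we track. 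The hypothesis $\al<1/4$ is crucial to keep the residual factor $\expo{\al\cdot(\text{small})}$ harmless even after iterating through the two nested invocations of Willet's inequality inside $H$.
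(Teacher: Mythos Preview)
Your overall strategy---particle trajectories, the ODE perturbation framework of Section~5, Willet's functional $H$, and the cancellation $e^A X_2(T_0)^{1-\al}\lesssim \de_2^{1-\al}$ via the key Lemma~\ref{keyLemma}---is exactly the paper's. But two of your key claims are wrong as stated, and fixing them is where all the work lies.

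First, the pointwise assertion ``$|X_i\d_{x_i}Q_i|$ is small relative to $\beta$, so $a(s)\ge\beta/2$'' fails. By Lemma~\ref{lemFor_c_b}, $|x_i\d_{x_i}Q_i|$ contains a term $x_i^{1-\ga}d(x)^{-1+\ga}$ which blows up as $x_2\to\de_2$; there is no uniform pointwise bound near the top of the box. The paper never claims $a\ge\beta/2$; it only uses $Q_i\ge\beta$ and relates $e^{\pm A}$ to $e^{\pm\int Q_2}$ via \emph{integrated} bounds (Lemma~\ref{lemExponential}), splitting the trajectory at the time $T_1$ where $X_2$ reaches $\de_2/2$ (Proposition~\ref{propTime}, Proposition~\ref{propSomeEst}). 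Your $(1+o(1))$ is really a bounded ``harmless generic factor'', not something that vanishes.

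Second, and more seriously, ``$Hf_0\lesssim f_0$'' is false. By Lemma~\ref{lemf1}, $f_1(t)$ itself carries a \emph{growing} factor $e^{\al\int_{T_0}^t Q_2}$; it is not bounded by a harmless expression in $\bde,K'$ alone. Consequently $Hf_0$ inherits growth: Lemma~\ref{lemEstH} gives only $(Hf_0)(t)\le g\|f_0\|_\infty\bigl[1+e^{\al\int Q_2}\bigr]$. The paper then has to invoke the key lemma \emph{repeatedly} inside the estimation of $H$ (trading each $e^{\al\int Q_2}$ for a factor $X_2(T_0)^{-\al}$), and it is precisely this iterated cancellation that produces a residual $X_2(T_0)^{1-4\al}$ in the final step of Lemma~\ref{lemEstH}---which is bounded if and only if $\al<1/4$. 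Your description of the role of $\al<1/4$ (``keep $\exp(\al\cdot(\text{small}))$ harmless'') misses this: the exponent is not small, and the $1/4$ is a hard count of how many times the growing factor must be absorbed. Similarly, in your $\xi_2$ discussion, the $X_1^{1-\al}$ prefactor in $b$ cancels only $e^{(1-\al)A}$ out of $e^{2A}$, leaving $e^{(1+\al)A}$; the remaining growth is killed only by pairing with $X_2(T_0)^{1-\al}$ via the key lemma again.
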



\subsection{Estimates along particle trajectories}
In this section we develop the technical tools to prove Theorem \ref{mainTechnicalThm}. The proofs for the estimates for $f_0,f_1,f_2$ and $Hf_0$ along trajectories are 
heavily interconected (see Figure \ref{figmap}). We advise the reader to concentrate on the main flow of arguments indicated by the bold arrows and boxes in the map of Section \ref{sec:main}.

\begin{figure}[htbp]
\begin{center}
\includegraphics[scale=0.55]{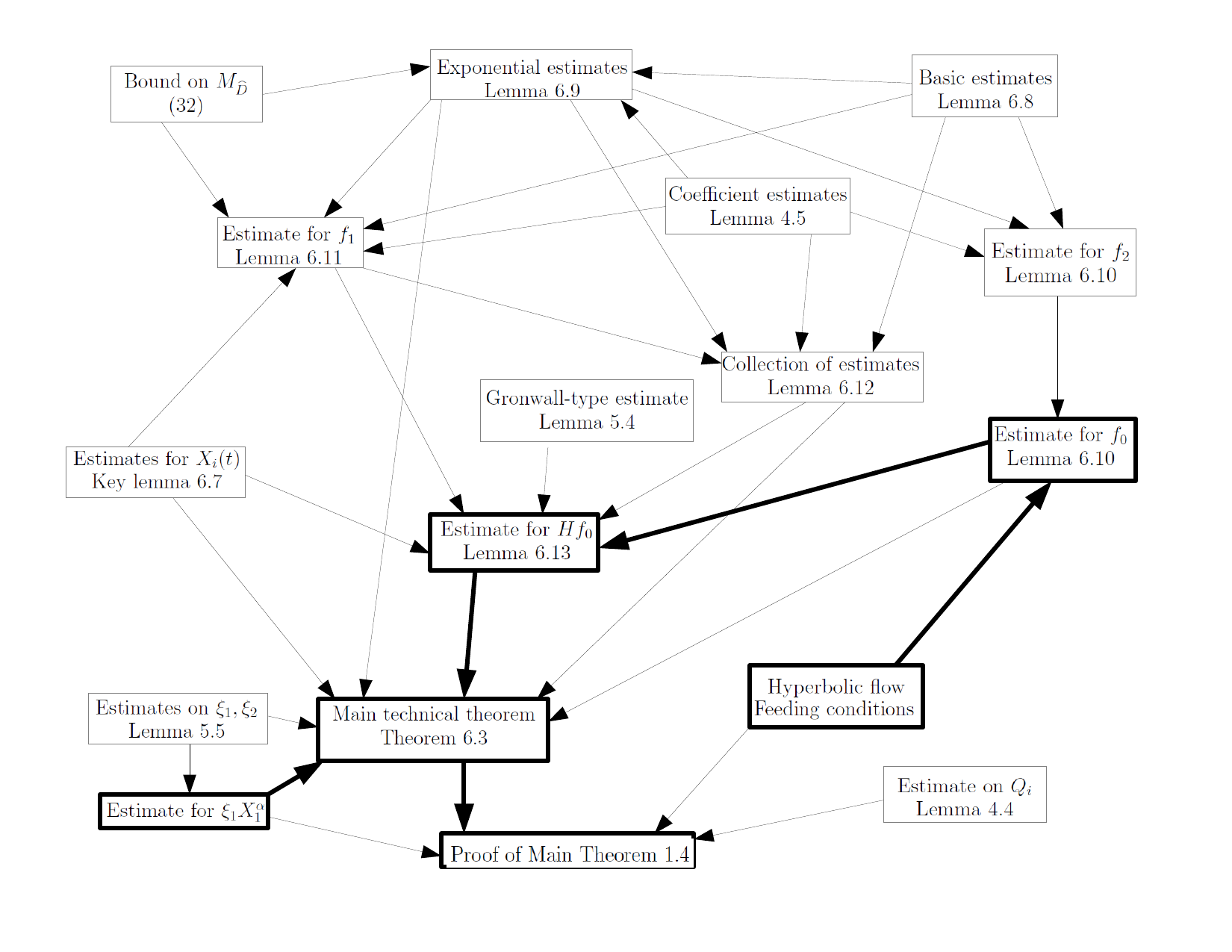}%
\end{center}
\caption{Map of Section \ref{sec:main}.} \label{figmap}
\end{figure} 
Let $\om$ be a given double-odd solution 
of the Euler equation that is in $C^1([0, \infty), C^2(\To))$. 
Moreover, let $\dD$ be a box depending on the parameters $\de_1, \de_2, \de_3 > 0$ satisfying the conditions \eqref{condDelta}.

Suppose also that for the remainder of this section, (i)-(iv) from theorem \ref{mainTechnicalThm} are satisfied.
For abbreviation, we write in the following
\begin{align*}
M := \max\{K, R\}.
\end{align*}
We observe the following important fact: since $\de_1, \de_2, \de_3 \leq 1$,
\begin{align}\label{ineq_M}
M_{\dD}(t) \leq M
\end{align}
holds.

We consider associated particle trajectories, which are the solutions of 
\begin{align}\label{partODE}
\dot X_1 = - X_1 Q_1, \,\,\,\dot X_2 = X_2 Q_2.
\end{align}
More precisely, we define the particle trajectories as follows: for any $(x_0, t_0)\in \ba D\times [0, \infty)$ we take the maximal
solution of $t\mapsto \bX(t)$ of \eqref{partODE} which passes through $(x_0, t_0)$, and lies $\ba D$. 
$\bX$ is defined on an interval $[T_0, T_e]$ such that 
\begin{enumerate}
\item[(i)] $\bX(t)\in \ba D$ for all $T_0 \le t \le T_e$,
\item[(ii)] either $T_0 = 0$ or $T_0 > 0$, in which case necessarily $\bX(T_0)\in \d D$,
\item[(iii)] $\bX(T_e) \in \d D$.
\end{enumerate}
Observe that $\bX$ is given by
\begin{align}\label{X2}
\begin{split}
X_1(t) &= X_1(T_0) \exp\left(- \intto Q_1(\bX(s), s)~ds\right) \\
X_2(t) &= X_2(T_0) \exp\left(\intto Q_2(\bX(s), s)~ds\right).
\end{split}
\end{align}
We call $T_0$ the entry time and $T_e$ the exit time of a particle trajectory. 
$T_0=0$ if the particle starts in $\ba D$ for $t=0$.

The next proposition gives a upper bound for the time a particle can spend in the upper
half of the box $D$, provided the flow is $\beta$-hyperbolic.

\begin{prop}\label{propTime}
Suppose that the flow is $\beta$-hyperbolic in the box $D$ on the time interval $[0, T]$. Let $\bX$ be a particle trajectory
whose entry time $T_0$ is smaller than $T$.Then if $X_2(T_0)\neq 0$ there is  either a time $T_1$, $T_e > T_1\geq T_0$ such that
\begin{align*}
X_2(t) \geq \half \de_2 \quad (t\in [T_1, T])
\end{align*}
or 
\begin{align*}
X_2(t) \leq \half \de_2 \quad (t\in [T_0, T]).
\end{align*}
If $T_1$ exists, we have the estimate
\begin{align*}
T_e - T_1 \leq {\beta}^{-1} \log(2).  
\end{align*}
\end{prop}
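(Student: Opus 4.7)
The backbone of the argument is the monotonicity of $t \mapsto X_2(t)$. Since $X_2(T_0) > 0$ and $Q_2 \geq \beta > 0$ on $D \times [0,T]$, the ODE $\dot X_2 = X_2 Q_2$ from \eqref{partODE} gives $X_2(t) > 0$ and $\dot X_2(t) \geq \beta X_2(t) > 0$ on $[T_0, \min\{T, T_e\}]$. Thus $X_2$ is strictly increasing there; combined with the exit condition $\bX(T_e) \in \d D$, this forces exit from $D$ to occur through the top side, i.e.\ $X_2(T_e) = \de_2$.

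I would then set
\begin{equation*}
T_1 \;=\; \inf\{\, t \in [T_0, T] : X_2(t) \geq \tfrac{1}{2}\de_2 \,\},
\end{equation*}
with the convention $T_1 = +\infty$ if the set is empty. The case $T_1 = +\infty$ is precisely the second alternative of the proposition. Otherwise, continuity gives $X_2(T_1) = \tfrac{1}{2}\de_2$ (or $T_1 = T_0$ with $X_2(T_0) \geq \tfrac{1}{2}\de_2$), and monotonicity delivers $X_2(t) \geq \tfrac{1}{2}\de_2$ on $[T_1, T]$, which is the first alternative.

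For the time bound I would evaluate the explicit formula \eqref{X2} at $t = T_e$ starting from the base time $T_1$, and use $Q_2 \geq \beta$ along $[T_1, T_e]$ (valid because $\bX$ stays in $\ba D$ and, in the regime of interest, $T_e \leq T$):
\begin{equation*}
\de_2 \;=\; X_2(T_e) \;=\; X_2(T_1) \exp\!\left(\int_{T_1}^{T_e} Q_2(\bX(s), s)\, ds\right) \;\geq\; \tfrac{\de_2}{2}\, e^{\beta (T_e - T_1)},
\end{equation*}
which rearranges to $T_e - T_1 \leq \beta^{-1} \log 2$.

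The only point that really deserves care—and where I expect the main, if modest, obstacle to lie—is ensuring that the lower bound $Q_2 \geq \beta$ is indeed available on the entire segment $[T_1, T_e]$ used in the final estimate, since hyperbolicity is assumed only on $[0, T]$. Working consistently with $\min\{T_e, T\}$ in place of $T_e$ and using $X_2(t) \leq \de_2$ on $\ba D$ first gives the unconditional bound $\min\{T_e, T\} - T_1 \leq \beta^{-1} \log 2$; the stated estimate then follows either because one is already in the regime $T \geq T_e$ or by a short bootstrap observing that this inequality itself prevents $T_e$ from exceeding $T$ when $T$ is large enough relative to $\beta^{-1}\log 2$.
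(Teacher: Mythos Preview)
The paper states this proposition without proof, presumably because the argument is regarded as immediate from the monotonicity of $X_2$ under $\beta$-hyperbolicity. Your proof is precisely the natural one and is correct: the strict increase of $X_2$ on $[T_0,\min\{T,T_e\}]$ produces the dichotomy via the first hitting time of $\tfrac12\de_2$, and the exponential lower bound $X_2(t)\ge \tfrac12\de_2\,e^{\beta(t-T_1)}$ together with $X_2\le \de_2$ yields the time estimate.

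Your caution about the interval $[T_1,T_e]$ versus $[T_1,\min\{T,T_e\}]$ is well placed. In every subsequent use of this proposition in the paper (Proposition~\ref{propSomeEst}(iii), Lemma~\ref{lemExponential}, etc.) the integrations run only up to $\min\{T,T_e\}$, so the inequality $\min\{T,T_e\}-T_1\le\beta^{-1}\log 2$, which you obtain unconditionally, is all that is actually required. The stronger form $T_e-T_1\le\beta^{-1}\log 2$ as literally written in the statement would indeed need $T_e\le T$; your remark that this is the operative regime (and that the bound itself forces it once $T$ is large) is the right way to read the statement.
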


\begin{proof} The statement on the time $T_1$ follows directly form the fact that the flow is $\beta$-hyperbolic in the box. If $T_1$ exists, we have analogously to \eqref{X2}
\begin{align*}
\de_2&=X_2(T_e)=X_2(T_1)\exp\left(\int_{T_1}^{T_e}Q_2~ds\right)\ge \frac{\de_2}{2}\exp\left(\int_{T_1}^{T_e}Q_2~ds\right)\ge\frac{\de_2}{2}\exp\left(\beta(T_e-T_1)\right).
\end{align*}
Solving for $T_e-T_1$ gives the result.
\end{proof}

\begin{definition}
We call a function $g=g(\al, \beta, \bde, M)$ \emph{harmless generic factor} it has the following property:
There exists a $p_0 > 0$ such that for all $p\ge p_0$ and fixed $\al, \beta, M$ 
\begin{align*}
g(\al, \beta, \de_2^p, \de_2, M)
\end{align*}
is bounded as $\de_2\to 0$.
\end{definition}

\begin{remark}
For example, a function of the form
\begin{align*}
g=C(\al, \beta) \left[\de_2^{\ga_3} M(1+|\log \de_2|)+\de_1^{\ga_1}{\de_2}^{-\ga_2}+1\right]^{\ga_4}+C(\al,\beta,\ga_j)
\end{align*}
($\gamma_j > 0$) is a harmless generic factor, and $e^g$ is also a harmless generic factor if $g$ is one.
When performing estimations, we shall often absorb harmless generic factors into one
another, so the actual meaning of $g$ may change from line to line.

In our argument there will appear only finitely many different generic factors (although all denoted by $g$). To make the boundedness of them all work as $\de_2\to 0$ we just pick a $p$ that is bigger than all the $p_0$ of all appearing generic factors.  
\end{remark}

Our goal will be to obtain estimates for the quantities $f_0, f_1, f_2, v_1, v_2$ along a single particle trajectory,
up to the given time $T$, so that we can apply our ODE estimates from section \ref{sec:perturbation}.
The crucial point is that our bounds depend \emph{not directly} on $\om,~T,~T_e$ but 
only on $\beta, \alpha, \bX(T_0)$.  
For the estimations below we often refer to a fixed particle trajectory with entry time $T_0$, along
which we evaluate integrals over time of the quantities $Q_1, Q_2, c$ etc. To make the notation more compact, 
we often skip $\bX$ in the arguments of the integrands, e.g. we write
\begin{align*}
\intto |c| e^{-A} ~ds=\intto |c| e^{-A(s)} ds = \intto |c(\bX(s), s)| \exp\left(\intso a(\bX(\tau), \tau )~ d\tau\right) ~ds.
\end{align*}


\begin{lem}\label{keyLemma}
For any $t\le T_e$,
\begin{align*}
X_2(T_0) \le \de_2 \expo{-\int_{T_0}^{t} Q_2(\bX(s),s)~ ds}.
\end{align*}
\end{lem}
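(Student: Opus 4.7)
The plan is to read off the conclusion directly from the explicit formula \eqref{X2} for $X_2(t)$ that was derived from the characteristic ODE $\dot X_2 = X_2 Q_2$. The only content beyond algebra is the observation that the trajectory remains in $\bar D$ on the interval $[T_0,T_e]$, hence in particular $X_2(T^*)\le \de_2$ whenever $T_0\le T^*\le T_e$.

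More concretely, I would argue as follows. Since $\bX$ solves \eqref{partODE} on $[T_0,T_e]$, integrating the ODE for $X_2$ from $T_0$ to any $T^*\in[T_0,T_e]$ gives
\begin{equation*}
X_2(T^*) = X_2(T_0)\,\exp\!\left(\int_{T_0}^{T^*} Q_2(\bX(s),s)\,ds\right),
\end{equation*}
which is just \eqref{X2} specialized to time $T^*$. Solving this identity for $X_2(T_0)$ yields
\begin{equation*}
X_2(T_0) = X_2(T^*)\,\exp\!\left(-\int_{T_0}^{T^*} Q_2(\bX(s),s)\,ds\right).
\end{equation*}
By the defining property of the trajectory, $\bX(t)\in \bar D$ for all $t\in[T_0,T_e]$, and $\bar D=[0,\de_1]\times[0,\de_2]$, so $X_2(T^*)\le \de_2$. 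Substituting this upper bound produces the claim.

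There is essentially no obstacle: the statement is a one-line rearrangement of the characteristic formula combined with the geometric constraint that the trajectory lies in $\bar D$ up to the exit time $T_e$. The only thing to be careful about is the direction of the inequality — we are using the exponential formula to express $X_2(T_0)$ in terms of $X_2(T^*)$ (rather than the reverse), which is what pulls the minus sign into the exponent and makes $\de_2$ an upper bound rather than a lower one. Note also that no sign condition on $Q_2$ is needed here; the estimate is purely a consequence of the ODE solution formula and the box geometry, so it will be available as an unconditional input to the subsequent estimates along particle trajectories.
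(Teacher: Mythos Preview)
Your proof is correct and essentially identical to the paper's: both use the explicit formula $X_2(T^*)=X_2(T_0)\exp\bigl(\int_{T_0}^{T^*}Q_2(\bX)\,ds\bigr)$ from \eqref{X2} together with $X_2(T^*)\le\de_2$ since $\bX(t)\in\bar D$ on $[T_0,T_e]$, then rearrange. There is nothing to add.
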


\begin{proof}
Since the particle trajectory lies in $D$ for $t\in [T_0, T_e]$, 
\begin{align*}
\de_2 \ge X_2(t) = X_2(T_0) \expo{\int_{T_0}^{t} Q_2~ ds}
\end{align*}
holds.
\end{proof}


Let $\phi: [0, \infty) \to [0, \infty)$ be a function with the properties
\begin{align*}
\phi(s) \leq 1-e^{-s}
\end{align*}
and $\phi$ monotone nondecreasing on $[0, \infty)$, 
$\phi$ linear on $[0, s^*]$ and $\phi$ constant on $[s^*, \infty)$ for some $s^*$. We fix such a function $\phi$ for the following.
\begin{prop}\label{propSomeEst}
Along a particle trajectory in a $\beta$-hyperbolic flow in $D$, we have the following
for $t\in [T_0, \min \{T_e,T\}]$:
\begin{enumerate}
\item[(i)]\begin{align*}
X_1(t) &\leq \de_1 \expo{-\beta (t-T_0)}\\
X_2(t) &\leq \de_2 \expo{-\beta (\min \{T_e,T\}-t)},
\end{align*}
\item[(ii)]\begin{align*}
d(\bX(t)) \geq \de_2 \phi\left(\int_{t}^{\min \{T_e,T\}} Q_2 ~ds\right) \geq \de_2 \phi(\beta(\minT-t)),
\end{align*}
\item[(iii)] Suppose $T_1$ from proposition \ref{propTime} exists. Then the following holds for any $\ga\in (0,1)$ and $t \in [T_1, \min\{T_e, T\}]$:
\begin{align*}
\int_{T_1}^{t} d(\bX(s))^{-1+\ga}~ds \leq C(\ga, \beta) \de_2^{-1+\ga},\\
\int_{T_1}^{t} |\log d(\bX(s))|~ds \leq C(\beta) |\log\de_2|
\end{align*}
with $C(\beta),C(\ga, \beta)$ independent of the trajectory.
\end{enumerate}
\end{prop}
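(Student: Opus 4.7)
The plan is to read all three items directly off the defining ODE \eqref{partODE} and the explicit formulas \eqref{X2}, combining the hyperbolicity bound $Q_i \ge \beta$ on $D\times[0,T]$ with the fact that the trajectory stays in $\ba D$ throughout $[T_0,T_e]$. For (i), the $X_1$-bound is immediate: from \eqref{X2}, $X_1(T_0)\le \de_1$ and $Q_1\ge\beta$ give $X_1(t)\le \de_1\exp(-\beta(t-T_0))$. For the $X_2$-bound I would invert the exponential in \eqref{X2} to write $X_2(t) = X_2(\minT)\exp\bigl(-\int_t^{\minT} Q_2\,ds\bigr)$; since $\bX(\minT)\in\ba D$ forces $X_2(\minT)\le \de_2$ and $Q_2\ge\beta$ on $[t,\minT]\subseteq[0,T]$, the claimed estimate follows.

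For (ii), the key observation is that in a $\beta$-hyperbolic flow with $X_2(T_0)>0$ the particle must exit $D$ through the top edge: $\dot X_1 = -X_1 Q_1 < 0$ forbids ever reaching $x_1=\de_1$ again, and $\dot X_2 = X_2 Q_2 >0$ forbids $X_2$ from reaching $0$, so necessarily $X_2(T_e)=\de_2$. Then \eqref{X2} gives $X_2(t) = \de_2\exp\bigl(-\int_t^{T_e}Q_2\,ds\bigr)$ and hence $d(\bX(t)) = \de_2\bigl(1-\exp(-\int_t^{T_e}Q_2\,ds)\bigr)$. The assumed inequality $\phi(s)\le 1-e^{-s}$ yields the first inequality of (ii), and then monotonicity of $\phi$ combined with $\int_t^{T_e}Q_2\,ds \ge \int_t^{\minT}Q_2\,ds \ge \beta(\minT-t)$ delivers the second.

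For (iii), I would substitute the lower bound from (ii) and rescale time. Using $d(\bX(s))^{-1+\ga}\le \de_2^{-1+\ga}\phi(\beta(\minT-s))^{\ga-1}$ and the substitution $u=\beta(\minT-s)$, the integral becomes $\de_2^{-1+\ga}\beta^{-1}\int_0^{\beta(\minT-T_1)}\phi(u)^{\ga-1}\,du$. Proposition \ref{propTime} bounds the upper limit by $\log 2$; and since $\phi$ is linear on $[0,s^*]$ with slope bounded away from zero, we have $\phi(u)^{\ga-1}\le C u^{\ga-1}$ on $[0,\min\{s^*,\log 2\}]$, which is integrable for $\ga\in(0,1)$, while on $[s^*,\log 2]$ (if nonempty) $\phi(u)^{\ga-1}$ is simply a constant. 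This produces the desired constant $C(\ga,\beta)$. The logarithmic integral is handled identically: from $|\log d(\bX(s))|\le |\log\de_2| + |\log\phi(\beta(\minT-s))|$, the first term contributes at most $(\minT-T_1)\,|\log\de_2|\le \beta^{-1}\log 2\cdot|\log\de_2|$ and the second reduces under the same change of variables to the finite quantity $\beta^{-1}\int_0^{\log 2}|\log\phi(u)|\,du$, which converges since $|\log\phi(u)|\le |\log c|+|\log u|$ near $0$.

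I expect the main technical care to lie in (ii), specifically the justification that $X_2(T_e)=\de_2$ (which rests on strict positivity of $\dot X_2$ along the trajectory, and hence on $X_2(T_0)>0$ and $Q_2>0$) and the bookkeeping between $T_e$ and $\minT$ when $T<T_e$; the latter is resolved by restricting the lower-bound integral to $[t,\minT]$ before invoking hyperbolicity, so that no information about $Q_2$ outside $[0,T]$ is needed for the second inequality of (ii).
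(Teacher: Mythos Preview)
Your proposal is correct and follows essentially the same route as the paper: formula \eqref{X2} together with $X_2(\minT)\le\de_2$ and $Q_i\ge\beta$ for (i); the identity $d(\bX(t))=\de_2-X_2(t)$ combined with $\phi(s)\le 1-e^{-s}$ for (ii); and in (iii) the piecewise structure of $\phi$ plus Proposition~\ref{propTime} to bound the length $\minT-T_1$. The paper splits the integral in (iii) at the time $T^*$ where $\phi(\beta(\minT-s))$ switches from its constant value $\phi(s^*)$ to its linear regime, which is equivalent to your substitution $u=\beta(\minT-s)$ followed by a split at $s^*$.

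One caveat in (ii): your assertion $X_2(T_e)=\de_2$ relies on $\dot X_1<0$ to rule out exit through $\{x_1=\de_1\}$, but $Q_1>0$ is only guaranteed by $\beta$-hyperbolicity on $[0,T]$, so when $T_e>T$ this is not justified. The paper's proof avoids this by working with $\minT$ throughout: it uses only $X_2(\minT)\le\de_2$ (which holds because $\bX(\minT)\in\ba D$) and in fact proves $d(\bX(t))\ge\de_2\,\phi\bigl(\int_t^{\minT}Q_2\,ds\bigr)$, so the displayed first inequality of (ii) with $T_e$ is established only when $T_e\le T$. You already flagged this bookkeeping and gave the correct remedy, so the second inequality of (ii) and all of (iii)---which is what is actually used later---are fully justified in your argument.
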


\begin{proof}
For (i), recall that under the assumption of $\beta$-hyperbolic flow, 
$Q_2 \geq \beta$. From \eqref{X2}, we get
\begin{align}
&X_2(t) = X_2(T_0)  \expo{\int_{T_0}^{\min \{T_e,T\}} Q_2~ds
- \int_{t}^{\min \{T_e,T\}} Q_2~ds}\nn\\ 
&\,\,\,= X_2(\min \{T_e,T\})\expo{- \int_{t}^{\min \{T_e,T\}} Q_2~ds}\nn \\
 &\,\,\,\leq \de_2 \expo{-\beta (\min \{T_e,T\}-t)}\label{propSomeEstEq1},
\end{align}
noting that $X_2(\min \{T_e,T\})\leq \de_2$. The bound for $X_1$ is analogous.

Now we show (ii). Recall that $d(\bX) = \de_2 - X_2(t)$. Hence by \eqref{propSomeEstEq1}
\begin{align*}
\de_2 - X_2(t)& = \de_2\left(1- \expo{-\int_t^{\min \{T_e,T\}} Q_2 ~ds}\right) \ge \de_2 \phi\left(\int_t^{\min \{T_e,T\}} Q_2 ~ds\right)\\
&\ge\de_2 \phi\left(\beta(\min \{T_e,T\}-t)\right).
\end{align*}

(iii) We split the integrals by introducing the time 
$T^*$ defined as follows: $T^*$ is the maximum of all $s\in [T_1,\min \{T_e,T\}]$ such that 
\begin{align*}
&\phi(\beta(\min\{T_e, T\}-s))= \phi(s^*).
\end{align*} 
If there are no such $s$, we set $T^*= T_1$. Thus we split the integrals in (iii)
as follows:
\begin{align*}
\int_{T_1}^{t}= \int_{T_1}^{T^*} \ldots +  \int_{T^*}^{t}\ldots 
\end{align*}
if $t \geq T^*$, otherwise we have only one integral from $T_1$ to $t$.
We calculcate 
\begin{align*}
\int_{T_1}^{T^*} d(\bX(s))^{-1+\ga}~ds &\leq  
\de_2^{-1+\ga} \int_{T_1}^{T^*} \phi(\beta(\min\{T_e, T\}-s))^{-1+\ga}~ds\\
&\leq \de_2^{-1+\ga} (T_e-T_1) \phi(s^*)^{-1+\ga} \leq C(\beta, \ga)\de_2^{-1+\ga}\\
\int_{T^*}^{t} d(\bX(s))^{-1+\ga}~ds &\leq 
\de_2^{-1+\ga} \int_{T^*}^{t} \phi(\beta(\min\{T_e, T\}-s))^{-1+\ga}~ds\\
&\ale \de_2^{-1+\ga} \beta^{-1+\ga}\int_{T^*}^{t} (\min\{T_e, T\}-s)^{-1+\ga}~ds\\
&\ale \de_2^{-1+\ga} \beta^{-1+\ga}\int_{T_1}^{\min\{T_e, T\}} (\min\{T_e, T\}-s)^{-1+\ga}~ds\\
&\ale \de_2^{-1+\ga} \beta^{-1+\ga}\int_{0}^{T_e-T_1} z^{-1+\ga}~dz
\ale \de_2^{-1+\ga} C(\beta, \ga).
\end{align*}
using (ii), Proposition \ref{propTime} to estimate $T_e-T_1$ and the fact that $\phi$ is linear on $[0, s^*]$.
 The second integral is treated analogously.
\end{proof}

\begin{lem}\label{lemExponential}
Along a particle trajectory, we have, for $T_0\leq t\leq \minT$,
\begin{align*}
e^{\pm A(t)} &\leq g(\al, \beta, \bde, M) \expo{\pm \intto Q_2(s)~ds}, \\
\exp\left(\pm \intto Q_i(s) ~ds\right) &\leq g(\al, \beta, \bde, M) \exp\left(\pm \intto Q_j(s)~ ds\right), \quad i, j=1,2
\end{align*}
where $g(\al, \beta, \bde, M)$ are harmless generic factors depending only on the quantities indicated.
\end{lem}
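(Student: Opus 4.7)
The proof hinges on the trace-zero condition for the matrix in \eqref{ode2}: since $(Q_1 + x_1\partial_{x_1}Q_1) + (-Q_2 - x_2\partial_{x_2}Q_2) = 0$, we can write
\[
a(s) = Q_i(\bX(s),s) + X_i(s)\,\partial_{x_i}Q_i(\bX(s),s) \qquad (i=1,2),
\]
so that
\[
A(t) - \intto Q_i(\bX(s),s)\,ds = \intto X_i(s)\,\partial_{x_i}Q_i(\bX(s),s)\,ds, \qquad i=1,2.
\]
Hence both stated inequalities will follow at once if I show that each of these integrals is bounded in absolute value by a harmless generic factor $g(\al,\beta,\bde,M)$: the first inequality from $e^{\pm A(t)} = e^{\pm \intto Q_1} \cdot e^{\pm(A(t)-\intto Q_1)}$ (noting $e^g$ is again harmless generic), and the second by subtracting the $i=1$ and $i=2$ identities so that $\intto(Q_1-Q_2)\,ds = \intto X_2\partial_{x_2}Q_2\,ds - \intto X_1\partial_{x_1}Q_1\,ds$.

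The central pointwise input is Lemma \ref{lemFor_c_b} (combined with $M_{\dD}(t)\leq M$ from \eqref{ineq_M}):
\[
|X_i\,\partial_{x_i}Q_i(\bX)| \leq C(\al)\,M\,X_i^{1-\al}(1 + |\log d(\bX)|) + C(\al,\ga)\,X_i^{1-\ga}\,d(\bX)^{-1+\ga}.
\]
For $i=1$ I use the exponential decay $X_1(s)\leq \de_1 e^{-\beta(s-T_0)}$ from Proposition \ref{propSomeEst}(i). Splitting at $T_1$ (as produced by Proposition \ref{propTime}; if $T_1$ does not exist then $d \geq \de_2/2$ on all of $[T_0,\minT]$, which is the easier case): on $[T_0,T_1]$ I use $d \geq \de_2/2$ directly, and the exponential in $X_1^{1-\al}$ integrates to a constant; on $[T_1,\minT]$, which has length $\leq \beta^{-1}\log 2$ by Proposition \ref{propTime}, I use $X_1^{1-\al}\leq \de_1^{1-\al}$ and invoke Proposition \ref{propSomeEst}(iii) for $\int |\log d|\,ds$ and $\int d^{-1+\ga}\,ds$. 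This yields a bound of the shape $C(\ga,\beta)\bigl(\de_1^{1-\al} M(1 + |\log\de_2|) + \de_1^{1-\ga}\de_2^{-1+\ga}\bigr)$, which is a harmless generic factor provided $\ga$ is chosen so that, setting $\de_1=\de_2^p$, both exponents are nonnegative (any $p\geq 1$ suffices).

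For $i=2$ the decay is reversed: $X_2(s)\leq \de_2 e^{-\beta(\minT - s)}$. The same splitting at $T_1$ works. On $[T_0,T_1]$, the exponential smallness of $X_2^{1-\al}$ and $X_2^{1-\ga}$ near the beginning dominates the crude bounds $|\log d|\leq |\log(\de_2/2)|$ and $d^{-1+\ga}\leq(\de_2/2)^{-1+\ga}$, giving after integration a constant multiple of $\de_2^{1-\al}(1+|\log\de_2|)$ and of $\de_2^{1-\ga}\de_2^{-1+\ga}=1$, respectively. On $[T_1,\minT]$ the interval has length $\leq \beta^{-1}\log 2$, $X_2^{1-\al}\leq \de_2^{1-\al}$, and Proposition \ref{propSomeEst}(iii) controls $\int|\log d|\,ds$ and $\int d^{-1+\ga}\,ds$ by $C(\ga,\beta)|\log\de_2|$ and $C(\ga,\beta)\de_2^{-1+\ga}$, again producing a harmless generic factor. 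I expect the only genuinely delicate point to be the bookkeeping near the top of the box, where $d\to 0$; this is exactly what Proposition \ref{propSomeEst}(iii) is designed to handle, and the short length of $[T_1,\minT]$ guaranteed by Proposition \ref{propTime} is essential to prevent the logarithmic and $d^{-1+\ga}$ singularities from being integrated over a long interval.
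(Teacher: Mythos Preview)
Your proof is correct and follows essentially the same approach as the paper: exploit the trace-zero identity $a = Q_i + X_i\partial_{x_i}Q_i$ for both $i$, reduce to bounding $\intto |X_i\partial_{x_i}Q_i|\,ds$ via Lemma~\ref{lemFor_c_b}, and estimate by splitting at $T_1$ using Proposition~\ref{propSomeEst}(i) and~(iii) together with Proposition~\ref{propTime}. The paper carries out the $i=2$ case explicitly and declares the rest analogous; your write-up is slightly more explicit about the $i=1$ case and the passage from the two integral bounds to the second displayed inequality, but the underlying argument is the same.
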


\begin{proof}
We prove the first inequality of the Lemma, for the other we use similar arguments.
Recall $a(t) = Q_2(t) + X_2(t) \d_{x_2} Q_2(t)$, $A(t)=\int_{T_0}^t a(s)~ds$ and thus
\begin{align*}
&\pm A(t) \leq \pm \intto Q_2(s) ~ds  + \intto |X_2(s) \d_{x_2} Q_2(s)|~ds.
\end{align*}
We now use Lemma \ref{lemFor_c_b} and \eqref{ineq_M}:
\begin{align*}
\intto |X_2(s) \d_{x_2} Q_2(s)|~ds \le  C(\al) M \inttoet X_2^{1-\al}(1+|\log d(\bX)|) ~ ds\\
  + C(\al, \ga) \inttoet  X_2^{1-\ga}d(\bX)^{-1+\ga}~ds.
\end{align*}
Note that the interval of integration has been enlarged. 
With $T_1$ from proposition \ref{propTime} we split the interval of integration into $[T_0, T_1]$ and $[T_1,  \min\{T, T_e\}]$ provided $\min\{T, T_e\} \geq T_1$. 
The case $\min\{T, T_e\} < T_1$ is analogous.

In the part over $[T_0, T_1]$, while $d(\bX)\geq \half \de_2$, we cannot control the length of the time interval, so we estimate as follows:
\begin{align*}
\int_{T_0}^{T_1}X_2^{1-\al}(1+|\log d(\bX)|) &\leq  \de_2^{1-\al}\int_{T_0}^{T_1} e^{-(1-\al)\beta (\minT-s)} (C+|\log \de_2|)~ds\\
&\le  C \de_2^{1-\al}  |\log \de_2| \int_{0}^{\infty} e^{-(1-\al)\beta z}~dz\\
&\le C(\al, \beta) \de_2^{1-\al}|\log \de_2|,
\end{align*}
using part (i) of proposition \ref{propSomeEst} and $d(\bX(s))\geq \half \de_2$ for $s\in [T_0, T_1]$, and $\de_2$ sufficiently small.
In the part over $[T_1, \minT]$ the length of the time interval is bounded but
$|\log d(\bX)|$ is unbounded, so we proceed differently:
\begin{align*}
\int_{T_1}^{\min\{T, T_e\}} X_2^{1-\al}(1+|\log d(\bX)|) &\le \de_2^{1-\al} \int_{T_1}^{\min\{T, T_e\}}|\log d(\bX)|~ds\\
&\le C(\beta) \de_2^{1-\al}|\log\de_2|.
\end{align*}
using statement (iii) of Proposition \ref{propSomeEst} and $X_2\leq \de_2$.

For the second integral involving $X_2^{1-\ga} d(\bX)^{-1+\ga}$, we note 
\begin{align*}
\int_{T_0}^{T_1} X_2^{1-\ga} d(\bX)^{-1+\ga_1}&\le C(\ga)  \de_2^{-1+\ga} \int_{T_0}^{T_1} 
\left(\de_2 e^{-\beta(\minT-s)}\right)^{1-\ga} ds\\
&\le  C(\ga,\beta) \\
\int_{T_1}^{\minT} X_2^{1-\ga}d(\bX)^{-1+\ga} &\le \de_2^{1-\ga}\int_{T_1}^{\minT}d(\bX)^{-1+\ga}\\
&\le C(\ga,\beta)
\end{align*}
by proposition \ref{propSomeEst}, (i) and (iii) and moreover using $X_2\leq \de_2$.
This yields finally
\begin{align*}
\intto |X_2(s) \d_{x_2} Q_2(s)|~ds \leq [C(\al, \beta)M \de_2^{1-\al}|\log \de_2|+C(\ga,\beta)] 
\end{align*}
implying the result, since the term in square brackets is a harmless generic factor. 

To prove the second inequality, we use (the velocity field is divergence-free)  
$$
Q_1(t) + X_1(t) \d_{x_1} Q_1(t) =  Q_2(t) +X_2(t) \d_{x_2} Q_2(t)
$$
implying $|Q_i| \le |Q_j| + \sum_{k=1}^2|x_k\partial_{x_k} Q_k|$. The expressions involving
$x_k\partial_{x_k} Q_k$ are estimated as before.
\end{proof}

\subsection{Estimates for $f_0,f_1,f_2,v_1,v_2$ and $Hf_0$.}
\begin{lem}\label{lemf2f0}
The following estimates hold for $T_0\leq t \leq \minT$:
\begin{align}
f_2(t)  &\leq  g(\al, \beta, \bde, M)  X_2(T_0)^{1-\al}, \label{lemf2f0_est1}\\
f_0(t) &\leq R\, g(\al, \beta, \bde, M)  X_2(T_0)^{1-\al}\nn.
\end{align}
\end{lem}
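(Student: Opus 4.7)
The plan is to establish the $f_2$ bound by a direct integration using the structural estimates already in hand (Lemma \ref{lemFor_c_b} for $|c|$, Lemma \ref{lemExponential} for $e^{-A}$, and Propositions \ref{propTime}--\ref{propSomeEst} for the trajectory geometry), and then to read off the $f_0$ bound from the feeding condition at the entry point $\bX(T_0)$. Starting from $f_2(t) = \int_{T_0}^t e^{-A(s)} |c(\bX(s),s)|\,ds$, I apply Lemma \ref{lemFor_c_b} with the choice $\ga_1 = \ga_2 = \al/2$ to split $|c|$ into an ``$M$-term'' of order $M\,X_2^{1-\al}$ and a ``singular term'' of order $X_2^{1-\al} X_1^{\al/2} d(\bX)^{-1+\al}$. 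Combining both inequalities of Lemma \ref{lemExponential} with the explicit formula $X_2(s) = X_2(T_0)\expo{\int_{T_0}^s Q_2\,d\tau}$ yields the key pointwise bound $e^{-A(s)} \le g\,X_2(T_0)/X_2(s)$, which is the main mechanism for converting the exponential factor into an $X_2(T_0)$-factor outside the integral and an $X_2(s)^{-1}$ inside. For the $M$-contribution the resulting integrand has order $g\,M\,X_2(T_0)\,X_2(s)^{-\al}$; using $X_2(s) \ge X_2(T_0)e^{\beta(s-T_0)}$ from $\beta$-hyperbolicity reduces the time integral to $\int_0^\infty e^{-\al\beta u}\,du$, yielding the desired $g\,M\,X_2(T_0)^{1-\al}$.

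For the singular term, after bounding $X_1 \le \de_1$ and applying the $e^{-A}$-bound, one is left with $g\,X_2(T_0)\,\de_1^{\al/2}\int_{T_0}^t X_2(s)^{-\al}\,d(\bX(s))^{-1+\al}\,ds$. I split the integration at the time $T_1$ produced by Proposition \ref{propTime}. On $[T_0,T_1]$, where $d(\bX)\ge\de_2/2$, the singular factor is cleanly controlled by $\de_2^{-1+\al}$ and the $X_2^{-\al}$ integral is treated as in the previous step. On $[T_1,\minT]$, where $X_2\ge\de_2/2$, I bound $X_2^{-\al}$ pointwise and invoke Proposition \ref{propSomeEst}(iii) with $\ga=\al$ to control $\int d(\bX)^{-1+\al}\,ds$ by $C(\al,\beta)\de_2^{-1+\al}$; crucially, one must also use the identity $X_2(T_0) \le X_2(T_0)^{1-\al}\de_2^\al$ (from $X_2(T_0)\le\de_2$) to extract the correct factor $X_2(T_0)^{1-\al}$ instead of a bare $X_2(T_0)$. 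Both pieces contribute $g\,X_2(T_0)^{1-\al}\de_1^{\al/2}\de_2^{-1+\al}$, which is $\le g\,X_2(T_0)^{1-\al}\de_1^{\al/2}\de_2^{-1+\al/2}$ since $\de_2\le 1$. Combined with the $M$-contribution, this proves \eqref{lemf2f0_est1}.

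Finally, for $f_0$: the entry point $\bX(T_0)$ lies either on the right edge of $D$ (where the controlled feeding condition extends by continuity from $\dD\setminus D$) or in $D$ at time zero (where hypothesis (iii) applies), so in either case $|\xi_1(T_0)|\le R\,X_2(T_0)^{1-\al}$ and $|\xi_2(T_0)|\le R$. Plugging these into $f_0(t) = |\xi_1(T_0)| + f_2(t)\,|\xi_2(T_0)|$ gives $f_0 \le R\,X_2(T_0)^{1-\al}\bigl(1 + g[M + \de_1^{\al/2}\de_2^{-1+\al/2}]\bigr)$, and the lone $1$ inside the parentheses is absorbed into the bracket by observing that $1/(M + \de_1^{\al/2}\de_2^{-1+\al/2})$ is itself a harmless generic factor (bounded as $\de_2\to 0$ with $\de_1=\de_2^p$ for any $p>0$, since the singular denominator is everywhere positive and its limiting behavior is monotone in $p$). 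The main technical hurdle is the exponent bookkeeping on the top region $[T_1,\minT]$: without the trick of writing $X_2(T_0) = X_2(T_0)^{1-\al}X_2(T_0)^\al \le X_2(T_0)^{1-\al}\de_2^\al$, one obtains only $X_2(T_0)\de_2^{-1}$ rather than the required $X_2(T_0)^{1-\al}\de_2^{-1+\al}$, and the estimate fails to close in the form needed by the later Gronwall-type argument.
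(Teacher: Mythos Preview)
Your proof is correct and follows essentially the same route as the paper's: the same choice $\ga_1=\ga_2=\al/2$ in Lemma \ref{lemFor_c_b}, the same use of Lemma \ref{lemExponential} to trade $e^{-A}$ for $e^{-\int Q_2}$, and the same splitting at $T_1$ via Propositions \ref{propTime} and \ref{propSomeEst}. The only cosmetic difference is that the paper packages the key pointwise bound directly as $e^{-A}X_2(s)^{1-\al}\le g\,X_2(T_0)^{1-\al}e^{-\al\beta(s-T_0)}$ (its inequality \eqref{lem_eq1}), whereas you first write $e^{-A}\le g\,X_2(T_0)/X_2(s)$ and then recover the $X_2(T_0)^{1-\al}$ factor on $[T_1,\minT]$ via the extra step $X_2(T_0)\le X_2(T_0)^{1-\al}\de_2^{\al}$; both arrive at the same estimate.
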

\begin{proof} We write $g = g(\al, \beta, \bde, M)$ for any occuring harmless
factor. Using Lemma \ref{lemFor_c_b} with $\ga_1=\ga_2=\frac{\al}{2}$, 
\begin{align*}
f_2(t) &= \intto e^{-A}|c| \ale M \inttoet e^{-A} X_2^{1-\al}~ ds\\
&\,\,+ C(\al)\inttoet e^{-A} X_2^{1-\al}X_1^{\al/2} d(\bX)^{-1+\al/2} ds.
\end{align*}
Observe first that by Lemma \ref{lemExponential} $e^{-A(s)}$ is estimated by $\expo{- \int_{T_0}^s Q_2~ d\tau}$ and thus using $Q_2\geq \beta$ again we get
\begin{align}\label{lem_eq1}
e^{-A} X_2(s)^{1-\al} 
&\leq g X_2(T_0)^{1-\al} \expo{-\al\int_{T_0}^s Q_2 ~d\tau}\leq g X_2(T_0)^{1-\al} \expo{-\al \beta (s - T_0)}.
\end{align}
Employing \eqref{lem_eq1} to estimate the integral containing $e^{A}X_2^{1-\al}$ yields: 
\begin{align*}
\int_{T_0}^{\minT}\!\!\!\!\! e^{-A} X_2^{1-\al} ds
&\leq g\,X_2(T_0)^{1-\al}  \int_{T_0}^{\infty} e^{-\al \beta(s-T_0)} ds\leq g\,X_2(T_0)^{1-\al} C(\al, \beta).
\end{align*}
For the integral containing $e^{-A} X_2^{1-\al}X_1^{\al/2} d(\bX)^{1-\al/2}$, we use \eqref{lem_eq1} again and estimate
\begin{align*}
&\inttoet\!\!\!\!\! e^{-A} X_2^{1-\al}X_1^{\al/2} d(\bX)^{1-\al/2} ds \le 
g X_2(T_0)^{1-\al} \de_1^{\al/2} \inttoet\!\!\!\!\! e^{-\al \beta(s-T_0)}d(\bX)^{-1+\al/2}ds.
\end{align*}
As in the proof of Lemma \ref{lemExponential}, we split the interval of integration into $[T_0, T_1]$ and $[T_1, \minT]$ in case $T_1 \leq \minT$,
obtaining
\begin{align}
\label{expintest1}\int_{T_0}^{T_1} e^{-\al \beta(s-T_0)}d(\bX)^{-1+\al/2}~ds  &\ale \de_2^{-1+\al/2},\\
\label{expintest2}\int_{T_1}^{\minT} e^{-\al \beta(s-T_0)}d(\bX)^{-1+\al/2}~ds &\ale \int_{T_1}^{\minT} d(\bX)^{-1+\al/2}~ds\ale \de_2^{-1+\al/2}
\end{align}
where we have used $d(\bX)\ge \half \de_2$ for \eqref{expintest1} and $e^{-\al \beta(s-T_0)}\leq 1$ and 
Proposition \ref{propSomeEst} for \eqref{expintest2}. The case $T_1 \geq \minT$ is covered by \eqref{expintest1}.
To estimate $f_0$, we use that the feeding condition holds and that assumption (iii) from Theorem \ref{mainTechnicalThm} holds. This gives 
\begin{align*}
|\xi_1(T_0)| &= |\d_{x_1}\om(\bX(T_0), T_0)| \leq R X_2(T_0)^{1-\al},\\
|\xi_2(T_0)|&=|\d_{x_2}\om(\bX(T_0), T_0)|\leq R
\end{align*}
for both of the cases $T_0=0$ (particle starts in $D$) and $T_0>0$ (particle starts in or crosses the feeding zone before entering $D$). 
Now use the definition of $f_0$ and the estimate \eqref{lemf2f0_est1} for $f_2$.
\end{proof}
\begin{lem}\label{lemf1}
For $T_0\leq t\leq \minT$, 
\begin{align*}
f_1(t) &\leq g(\al, \beta, \bde, M) \de_1^{1-\al} \de_2^{\al} e^{ \al \intto Q_2 \,ds} 
\end{align*}
with a harmless generic factor $g$ depending on the quantities indicated. 
\end{lem}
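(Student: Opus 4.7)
\emph{Plan.} The plan is to bound
\[
f_1(t)=\intto e^{-A(s)}|c(s)|\,F(s)\,ds,\qquad F(s):=\int_{T_0}^{s}e^{2A(\tau)}|b(\tau)|\,d\tau,
\]
by inserting the pointwise bounds of Lemma~\ref{lemFor_c_b} on $|c|$ and $|b|$, converting every $e^{\pm A}$ factor into $e^{\pm\int Q_2\,ds'}$ via Lemma~\ref{lemExponential}, and exploiting the cancellation between the growing and decaying exponentials so that only an $\al$-power of $e^{\int Q_2\,ds'}$ survives. Throughout, $g$ denotes a harmless generic factor which may change from line to line.

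\emph{Key algebraic identity.} Since $X_1(\tau)=X_1(T_0)\exp\bigl(-\int_{T_0}^{\tau}Q_1\,ds'\bigr)$, Lemma~\ref{lemExponential} (converting $Q_1$ to $Q_2$ at harmless cost) gives
\[
X_1(\tau)^{1-\al}\le g\,X_1(T_0)^{1-\al}\,e^{-(1-\al)\int_{T_0}^{\tau}Q_2\,ds'},
\]
and hence $e^{A(\tau)}X_1(\tau)^{1-\al}\le g\,\de_1^{1-\al}\,e^{\al\int_{T_0}^{\tau}Q_2\,ds'}$. Multiplying once more by $e^{A(\tau)}\le g\,e^{\int_{T_0}^{\tau}Q_2\,ds'}$ raises the exponent by only $\int Q_2\,ds'$, so that $e^{2A(\tau)}X_1(\tau)^{1-\al}\le g\,\de_1^{1-\al}\,e^{(1+\al)\int_{T_0}^{\tau}Q_2\,ds'}$. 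Inserting the leading piece $|b(\tau)|\le g M X_1(\tau)^{1-\al}(1+|\log d(\bX)|)$ of Lemma~\ref{lemFor_c_b}, absorbing the logarithmic term with Proposition~\ref{propSomeEst}, and integrating via $\int_{T_0}^{s}e^{(1+\al)\int_{T_0}^{\tau}Q_2\,ds'}\,d\tau\le g\,e^{(1+\al)\int_{T_0}^{s}Q_2\,ds'}$ (the integrand grows at rate $\ge(1+\al)\beta$ times itself), I arrive at $F(s)\le g\,M\,\de_1^{1-\al}\,e^{(1+\al)\int_{T_0}^{s}Q_2\,ds'}$. Multiplying by $e^{-A(s)}\le g\,e^{-\int_{T_0}^{s}Q_2\,ds'}$ yields the crucial cancellation
\[
e^{-A(s)}\,F(s)\;\le\;g\,M\,\de_1^{1-\al}\,e^{\al\int_{T_0}^{s}Q_2\,ds'}.
\]

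\emph{Conclusion.} Multiplying by $|c(s)|\le g\,M\,X_2(s)^{1-\al}\le g\,M\,\de_2^{1-\al}$ and integrating in $s$ by the same exponential-integration trick produces
\[
f_1(t)\;\le\;g\,M^2\,\de_1^{1-\al}\,\de_2^{1-\al}\,e^{\al\intto Q_2\,ds}\;\le\;g\,\de_1^{1-\al}\,\de_2^{1-2\al}\,e^{\al\intto Q_2\,ds},
\]
where the last inequality uses $\de_2\le 1$. The error pieces of $|c|,|b|$ (the terms in Lemma~\ref{lemFor_c_b} not carrying the $M$ factor) are treated identically after selecting parameters $\ga_1,\ga_2,\ga$ of order $\al$. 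The time integrals of the singular factor $d(\bX(s))^{-1+\ga}$ along the trajectory are controlled by splitting $[T_0,\minT]$ into $[T_0,T_1]$ (where $d(\bX)\ge\tfrac{\de_2}{2}$ so $d^{-1+\ga}\le g\,\de_2^{-1+\ga}$ and the growing exponential integrates as above) and $[T_1,\minT]$ (of bounded length $\le\beta^{-1}\log 2$, on which Proposition~\ref{propSomeEst}(iii) supplies $\int d(\bX)^{-1+\ga}\,ds\le g\,\de_2^{-1+\ga}$).

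\emph{Main obstacle.} The delicate point is that the exponent $e^{(1+\al)\int Q_2\,ds'}$ produced by $e^{2A(\tau)}$ must be cancelled \emph{exactly} against $e^{-A(s)}$ together with the decay in $X_1(\tau)^{1-\al}$: any extra factor of $e^{\int Q_2\,ds'}$ would destroy the result since $\int Q_2\,ds$ is unbounded in time. Simultaneously, for the error terms the $\de_2^{-1+\ga}$ arising from the $d(\bX)^{-1+\ga}$-integrals must be compensated by the powers of $\de_1^{\ga_2}$ and $\de_2^{1-\ga_1-\ga_2}$ coming from $X_1^{\ga_2}X_2^{1-\ga_1-\ga_2}$, so that any leftover ratio $(\de_1/\de_2)^{\ga_2}$ is harmless in the sense of the definition introduced above and can be absorbed into the factor $g(\al,\beta,\bde,M)$.
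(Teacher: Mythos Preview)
Your argument follows essentially the same strategy as the paper: bound the inner integral $F(s)=\int_{T_0}^{s}e^{2A}|b|$ first, then multiply by $e^{-A}|c|$ and integrate, arranging the growing and decaying exponential factors so that only $e^{\al\int Q_2}$ survives. The main structural difference is that the paper writes $e^{-A}|c|\le g\,X_2(T_0)^{1-\al}e^{-\al\int Q_2}[\ldots]$ and afterwards applies the key Lemma~\ref{keyLemma} to convert $X_2(T_0)^{1-\al}e^{\int Q_2}$ into $\de_2^{1-\al}e^{\al\int Q_2}$, whereas you cancel $e^{-A}$ directly against $e^{(1+\al)\int Q_2}$ and then simply use $X_2(s)\le\de_2$; this is a legitimate and slightly more economical rearrangement that avoids invoking Lemma~\ref{keyLemma}.

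There is one bookkeeping gap. When you write ``absorbing the logarithmic term with Proposition~\ref{propSomeEst}'' and conclude $F(s)\le g\,M\,\de_1^{1-\al}e^{(1+\al)\int Q_2}$ with $g$ harmless, this is not quite right: Proposition~\ref{propSomeEst}(iii) only converts $\int|\log d(\bX)|$ into $C|\log\de_2|$, and $|\log\de_2|$ is \emph{not} a harmless generic factor (it is unbounded as $\de_2\to 0$). The honest bound at that stage is $F(s)\le g\,M(1+|\log\de_2|)\,\de_1^{1-\al}e^{(1+\al)\int Q_2}$. Fortunately your proof is saved by the final step: the $\de_2^{\al}$ you discard when passing from $\de_2^{1-\al}$ to $\de_2^{1-2\al}$ is exactly what is needed to make $M^{2}(1+|\log\de_2|)\de_2^{\al}$ harmless. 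So the conclusion stands, but the justification ``the last inequality uses $\de_2\le 1$'' should instead say that the extra $\de_2^{\al}$ absorbs the residual $|\log\de_2|$ (and the $M^{2}$) into $g$. The same remark applies to the error terms: you need $\ga\le\al$ in the $|b|$-error so that the resulting exponential is at most $e^{\al\int Q_2}$; the paper gets away with $\ga=2\al$ only because it factors $X_1^{1-\al}$ out of both pieces before converting to exponentials.
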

\begin{proof}
We abbreviate again $g = g(\al, \beta, \bde, M)$. First we claim that for sufficiently small $\de_2$
\begin{align}\label{lemf1_claim}
\intto e^{2 A} |b|~ds &\le g\, X_1(T_0)^{1-\al}\left[M|\log \de_2|+\de_1^{\frac{\al}{2}}{\de_2}^{-1+\frac{\al}{2}}\right]e^{(1+\al)\intto Q_2\,ds}.
\end{align}
We treat the case $T_1 \leq t \leq \minT$. Using Lemma \ref{lemFor_c_b}
 with $\ga= \frac{\al}{2}$,  and Lemma \ref{lemExponential} we get
\begin{align*}
e^{2 A} |b| &\leq e^{2 A} X_1^{1-\al}[M_{\dD}(t)(1+|\log d(\bX)|)+X_1^{\frac{\al}{2}} d(\bX)^{-1+\frac{\al}{2}}]\\
 &\leq g \,e^{(1+\al)\int_{T_0}^s Q_1 ds} \,X_1(T_0)^{1-\al}[M(1+|\log d(\bX)|)+\de_1^{\frac{\al}{2}} d(\bX)^{-1+\frac{\al}{2}}].
\end{align*}
Also recall that $M_{\dD}(t)\leq M$.
To integrate this bound from $T_0$ to $t$ we split into two integrals from $T_0$ to $T_1$ and $T_1$ to $t$. For $t\in [T_0,T_1]$ we can estimate the factor in square brackets independent of $t$ using $d(\bX)\ge\half \de_2$:
\begin{align*}
g\,X_1(T_0)^{1-\al}[M(1+|\log \de_2|)+\de_1^{\frac{\al}{2}} \de_2^{-1+\frac{\al}{2}}]\int_{T_0}^{T_1} e^{(1+\al)\int_{T_0}^{s} Q_1 d\tau}~ds.
\end{align*}
The remaining integral can be estimated as follows:
\begin{align*}
\int_{T_0}^{T_1}  e^{(1+\al)\int_{T_0}^{s} Q_1 d\tau}~ds &\le \int_{T_0}^{t}  \frac{Q_1}{Q_1}  e^{(1+\al)\int_{T_0}^{s} Q_1 d\tau}~ds
\leq \beta^{-1} (1+\al)^{-1}  \left.e^{(1+\al)\int_{T_0}^{s} Q_1 d\tau}\right|_{s=T_0}^{s=t}\\
 &\ale e^{(1+\al)\int_{T_0}^{t} Q_1 d\tau}.
\end{align*}
Hence for sufficiently small $\de_2$
\begin{align*}
 \int_{T_0}^{T_1} e^{2 A} |b|~ds  &\ale g\, X_1(T_0)^{1-\al}[M|\log \de_2|+\de_1^{\frac{\al}{2}} \de_2^{-1+\frac{\al}{2}}]e^{(1+\al)\int_{T_0}^{T_1} Q_1}.
\end{align*}
For the remaining part $\int_{T_1}^{t} e^{2 A} |b|~ds$, we use Proposition \ref{propSomeEst} again, and find the bound for small $\de_2$
\begin{align*}
&g\,X_1(T_0)^{1-\al} 
\int_{T_1}^{t}e^{(1+\al)\int_{T_0}^s Q_1 ds} [M(1+|\log d(\bX)|)+\de_1^{\frac{\al}{2}} d(\bX)^{-1+\frac{\al}{2}}]~ds\\
&\le g\,e^{(1+\al)\int_{T_0}^t Q_1 ds} X_1(T_0)^{1-\al} 
\int_{T_1}^{t}[M(1+|\log d(\bX)|)+\de_1^{\frac{\al}{2}} d(\bX)^{-1+\frac{\al}{2}}]~ds\\
&\,\,\ale g\, X_1(T_0)^{1-\al} 
[M|\log \de_2|+\de_1^{\al/2} \de_2^{-1+\al/2}] e^{(1+\al)\int_{T_0}^t Q_1 ds}.
\end{align*}
Using the second estimate from Lemma \ref{lemExponential} the claim follows for the case $T_1\leq t\leq \minT$. The calculation for $t\leq T_1$ is similar (and slightly simpler).

Next, using again Lemma \ref{lemFor_c_b} with $\ga_1=\ga_2= \al/2$ and Lemma \ref{lemExponential},
\begin{align*}
e^{-A} |c|&\leq e^{-A}X_2^{1-\al} [M + X_1^{\al/2} d(\bX)^{-1+\al/2}]\\
&\leq g\, e^{-\al \intto Q_2~ds} X_2(T_0)^{1-\al} \left[M + \de_1^{\al/2} d(\bX)^{-1+\al/2}\right].
\end{align*}
Hence
\begin{align*}
\intto e^{-A} |c|\intso e^{2 A} |b| &\ale g\, X_2(T_0)^{1-\al} X_1(T_0)^{1-\al}
\left[M|\log \de_2|+\de_1^{\al/2}{\de_2}^{-1+\al/2}\right]\\
&\qquad\times \intto e^{ \intso Q_2\,d\tau}[M + \de_1^{\al/2} d(\bX)^{-1+\al/2}]~ ds.
\end{align*}
We continue to estimate the last integral:
\begin{align*}
&\intto e^{ \intso Q_2\, d\tau} [M + \de_1^{\al/2} d(\bX)^{-1+\al/2}]~ ds\\
=&\,M\intto e^{ \intso Q_2\, d\tau} ds + \intto e^{ \intso Q_2\, d\tau} \de_1^{\al/2} d(\bX)^{-1+\al/2}~ ds\\
\le&\,M\intto\frac{Q_2}{Q_2} e^{ \intso Q_2\, d\tau} ds + e^{ \intto Q_2\, d\tau} \de_1^{\al/2} \intto  d(\bX)^{-1+\al/2}~ ds\\
\ale&\,e^{ \intto Q_2\, ds} \left[M+ \de_1^{\al/2}\de_2^{-1+\al/2}\right]
\end{align*}
where we used the familiar splitting at $T_1$. Thus, finally we get 
\begin{align*}
\intto e^{-A} |c|\intso e^{2 A} |b|&\ale g\, X_1(T_0)^{1-\al}
\left[M|\log \de_2|+\de_1^{\al/2}{\de_2}^{-1+\al/2}\right]\\
&\qquad\times [M + \de_1^{\al/2} \de_2^{-1+\al/2}] e^{ \intto Q_2\, ds} X_2(T_0)^{1-\al}.
\end{align*}
It remains to apply key Lemma \ref{keyLemma} to estimate the factor $e^{ \int_{T_0}^{t} Q_2 \,ds} X_2(T_0)^{1-\al}$,
which is less than
\begin{align*}
&\de_2^{1-\al} e^{ \int_{T_0}^t Q_2\, ds}
e^{-(1-\al) \int_{T_0}^{t} Q_2\, ds}
\le \de_2^{1-\al} e^{ \al \int_{T_0}^t Q_2\, ds}=\de_2^{1-2\al}\de_2^{\al} e^{ \al \int_{T_0}^t Q_2\, ds}.
\end{align*}
Now observe that the expression $\left[M|\log \de_2|+\de_1^{\al/2}{\de_2}^{-1+\al/2}\right] [M + \de_1^{\al/2} \de_2^{-1+\al/2}]\de_2^{1-2\al}$ is a harmless factor since $\al<1/4$.

\end{proof}
\begin{lem}\label{lem11} For sufficiently small $\de_2$ and $t\in[T_0,\minT]$ we have the following inequalities.
\begin{align}
\label{estv1}&v_1(t) \le g \left[Q_2 + M X_2^{1-\al}|\log d(\bX)| + \de_2^{1-\al} d(\bX)^{-1+\al}\right] e^{-\intto Q_2\, ds},\\
\label{estv2}&v_2(t) \leq g\, X_1(T_0)^{1-\al} \left[M|\log d(\bX)|+\de_1^{\al/2} d(\bX)^{-1+\al/2}\right]  e^{\al \intto Q_2\, ds},\\
\label{estintv1}&\intto v_1e^{\al\intso Q_2~d\tau}~ds \le g\\
\label{estintv2}&\intto v_2~ds \le g\,e^{\al \intto Q_2 \,ds}\\
\label{estintv22}&\intto v_2 e^{\al \intso Q_2 \,d\tau}\,ds \le g\, e^{2 \al \intto Q_2\, ds}, \\
\label{estintv1f1}&\intto v_1 f_1~ds \le g \,\de_1^{1-\al}\de_2^{\al},\\
\label{estintv2f1}&\intto v_2 f_1~ds \le g\, \de_1^{1-\al} \de_2^{\al/2}X_1(T_0)^{1-\al} e^{2\al\intto Q_2~d\tau},\\
\label{estintv1f2}&\intto v_1 f_2~ds \leq g X_2(T_0)^{1-\al},
\end{align}
where $g = g(\al, \beta, \bde, M)$ is a harmless factor.
\end{lem}
\begin{proof}
The estimates \eqref{estv1}-\eqref{estintv2} follow from Lemma \ref{lemFor_c_b}, Lemma \ref{lemExponential}, Proposition \ref{propSomeEst} and the usual splitting of the interval of integration into $[T_0,T_1]$ and $[T_1,\minT]$ . \eqref{estintv22} follows easily from \eqref{estintv2} and Lemma \ref{lemExponential}.

Using Lemma \ref{lemf1} and Lemma \ref{lemFor_c_b} we get
\begin{align*}
v_1 f_1 (s)& \leq  g\, \de_1^{1-\al}\de_2^{\al}\left[Q_2 + M X_2^{1-\al}|\log d(\bX)| + \de_2^{1-\al} d(\bX)^{-1+\al}\right] e^{(-1+\alpha) \intso Q_2\, d\tau}.
\end{align*} 
Note how the exponential growth of the factor $f_1$ was cancelled by the exponential factor in $v_1$.
By integrating, we get: 
\begin{align*}
\int_{T_0}^{t} v_1 f_1 ~ds&\le g\, \de_1^{1-\al}\de_2^{\al} \int\limits_{T_0}^{t} e^{(-1+\al) \intso Q_2 ~d\tau} \left[ Q_2 + M \de_2^{1-\al}|\log d(\bX)| + \de_2^{1-\al}d(\bX)^{-1+\al}\right]~ds \\
&\le g \,\de_1^{1-\al}\de_2^{\al}\left[M \de_2^{1-\al}|\log \de_2| +1 \right]
\end{align*}
giving \eqref{estintv1f1} since the factor in square brackets is harmless and can be absorbed into $g$. 

Proceeding analogously to prove \eqref{estintv2f1} we find
\begin{align*}
v_2f_1(s)&\le \,g \de_1^{1-\al} \de_2^{\al}X_1(T_0)^{1-\al} e^{2\al\intso Q_2~d\tau} \left[
M|\log d(\bX)| + \de_1^{\al/2} d(\bX)^{-1+\al/2}\right]
\end{align*}
which after integration from $T_0$ to $t$ can be estimated as follows:
\begin{align*}
\intto v_2f_1~ds &\le g\, \de_1^{1-\al} \de_2^{\al}X_1(T_0)^{1-\al} e^{2\al\intso Q_2~d\tau} \left[
M|\log \de_2| + \de_1^{\al/2} \de_2^{-1+\al/2}\right]\\
 &\le g\, \de_1^{1-\al} \de_2^{\al/2}X_1(T_0)^{1-\al} e^{2\al\intto Q_2~d\tau} \left[
M\de_2^{\al/2}|\log \de_2| + \de_1^{\al/2} \de_2\right].
\end{align*}
Again the factor in square brackets can be absorbed into $g$ giving \eqref{estintv2f1}.

\eqref{estintv1f2} is derived using the same techniques.

\end{proof}
\begin{lem}\label{lemEstH}
Along a particle trajectory, for $T_0\leq t \leq \minT$,
\begin{align}\label{lemEstH_eq3}
(H f_0)(t)&\leq g \|f_0\|_{\infty}(t)e^{\al \intto Q_2~ ds}
\end{align}
holds, where $\|f_0\|_{\infty}(t) =\sup\{|f_0(s)|\,:\,s\in [T_0, t]\}$.
\end{lem}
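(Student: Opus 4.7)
\emph{Proof plan.} I will expand $(Hf_0)(t)$ via Lemma \ref{lem_Willet} into three pieces: the trivial $f_0(t)$, a linear piece $f_1(t) E_1(t) \int_{T_0}^t v_1 f_0\, ds$, and a coupled piece $B(t)\cdot E_2(t) \cdot \int_{T_0}^t v_2 \tilde{B}\, ds$, where $E_1 = \exp\bigl(\int_{T_0}^t v_1 f_1\bigr)$, $E_2 = \exp\bigl(\int_{T_0}^t v_2 B\bigr)$, $B(s) = f_2(s) + f_1(s) E_1(s) \int_{T_0}^s v_1 f_2\, d\tau$, and $\tilde{B}(s) = f_0(s) + f_1(s) E_1(s) \int_{T_0}^s v_1 f_0\, d\tau$. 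Each piece will be shown to be bounded by $g\,\|f_0\|_\infty\bigl(1 + e^{\alpha \int_{T_0}^t Q_2\, ds}\bigr)$ with $g$ a harmless generic factor.

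The critical preliminary step is to show that both exponentials $E_1, E_2$ are bounded by harmless factors. For $E_1$, Lemma \ref{lem11} gives $\int v_1 f_1 \le g\, \delta_1^{1-\alpha}\delta_2^{1-\alpha/2}$, which is itself harmless. For $E_2$, I use $\int v_1 f_2 \le g X_2(T_0)^{1-\alpha}$ inside $B$ together with Lemma \ref{lem11}'s estimate $\int v_2 f_1 \le g\, \delta_1^{1-\alpha}\delta_2^{1-\alpha} X_1(T_0)^{1-\alpha} e^{2\alpha \int Q_2}$ to reduce $\int v_2 B$ to a sum of a harmless term and a term of the form $g\cdot(\text{harmless})\cdot X_2(T_0)^{1-\alpha} e^{2\alpha \int Q_2}$. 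The key trajectory identity $X_2(T_0)^{1-\alpha} e^{2\alpha \int Q_2} = X_2(T_0)^{1-3\alpha} X_2(t)^{2\alpha} \le \delta_2^{1-\alpha}$, valid for $\alpha < 1/3$, then yields $\int v_2 B \le g$, hence $E_2 \le g$. A parallel computation handles $\int v_2 f_2 \le g$.

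Once $E_1, E_2$ are absorbed, the three pieces are bounded as follows. The trivial piece contributes $\|f_0\|_\infty$. The linear piece, using $\int v_1 f_0 \le g\,\|f_0\|_\infty$ and $f_1 \le g\, \delta_1^{1-\alpha}\delta_2^{1-2\alpha} e^{\alpha \int Q_2}$ from Lemma \ref{lemf1}, gives $g\,\|f_0\|_\infty e^{\alpha \int Q_2}$. For the coupled piece, I first estimate $\tilde B(s) \le g\,\|f_0\|_\infty\bigl(1+f_1(s)\bigr)$ and $B(s) \le g X_2(T_0)^{1-\alpha}\bigl(1+f_1(s)\bigr)$, then $\int v_2 \tilde B \le g\,\|f_0\|_\infty\bigl(\int v_2 + \int v_2 f_1\bigr)$, where $\int v_2 \le g\, e^{\alpha\int Q_2}$ follows from integrating the pointwise bound on $v_2$ from Lemma \ref{lem11} via the $Q_1 \ge \beta$ trick and Lemma \ref{lemExponential}. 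The resulting cross-terms all take the form $X_2(T_0)^{1-\alpha} e^{k\alpha \int Q_2}$ for $k \in \{1,2,3\}$, each reducing to $\le \delta_2^{1-\alpha}$ via $X_2(T_0)^{1-\alpha} e^{k\alpha\int Q_2} = X_2(T_0)^{1-(k+1)\alpha} X_2(t)^{k\alpha}$ provided $\alpha < 1/(k+1)$; the worst case $k=3$ requires exactly $\alpha < 1/4$.

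The main obstacle is the systematic use of this trajectory identity to trade small positive powers of $X_2(T_0)$ against the growing exponentials $e^{k\alpha \int Q_2}$. The hypothesis $\alpha < 1/4$ is precisely what keeps all relevant exponents of $X_2(T_0)$ nonnegative, so each such product stays bounded by a power of $\delta_2$ and can be absorbed into $g$; without this cancellation $E_2$ could grow super-exponentially and the whole perturbation scheme would collapse.
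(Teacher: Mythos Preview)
Your proof is correct and follows essentially the same route as the paper: decompose $Hf_0$ into the three pieces dictated by Lemma~\ref{lem_Willet}, show that both exponentials $E_1,E_2$ are harmless via Lemmas~\ref{lemf1} and~\ref{lem11}, and then kill every growing factor $e^{k\alpha\int Q_2}$ by trading it against a power of $X_2(T_0)$ through the key Lemma~\ref{keyLemma}. The paper organizes the bookkeeping a little differently---it absorbs one application of the key lemma directly into the estimate $B\le g\,X_2(T_0)^{1-2\alpha}$ rather than keeping $B\le g\,X_2(T_0)^{1-\alpha}(1+f_1)$ and postponing the cancellation to the cross-terms---but the substance and the place where $\alpha<\tfrac14$ is needed are identical.

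One small point of care: when you write ``$\int v_1 f_0 \le g\|f_0\|_\infty$'' or ``$\int v_2 \le g\,e^{\alpha\int Q_2}$'', the constant you get from Lemma~\ref{lem11} actually carries a factor $M(1+|\log\delta_2|)$, which is \emph{not} harmless on its own. In every place you use these bounds, however, they are multiplied by $f_1$ or by $X_1(T_0)^{1-\alpha}\le\delta_1^{1-\alpha}$, and the resulting $\delta$-powers absorb the logarithm; the paper handles this the same way. Just make sure, if you write this out in full, that you do not strip the log before pairing it with its compensating $\delta$-factor.
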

\begin{proof}
Using Lemmas \ref{lemf1}, \ref{lem11}, we get
\begin{align}
f_0+f_1 \expo{\intto v_1 f_1}\intto v_1 f_0& \le g\,\|f_0\|_{\infty}(t) e^{\al \intto Q_2\, ds} \label{lemEstH_eq1}.
\end{align}
Recall that products and exponentials of harmless factors are harmless, too.

Next, using Lemma \ref{lemf2f0}, \ref{lemf1} and \ref{lem11}
\begin{align*}
f_2+f_1 \expo{\intto v_1 f_1}\intto v_1 f_2& \le g\, X_2(T_0)^{1-\al}e^{\al \intto Q_2~ ds}\\
&= g\, X_2(T_0)^{1-2\al}X_2(T_0)^{\al}e^{\al \intto Q_2~ ds}\\
&\le g\, X_2(T_0)^{1-2\al}\de_2^{\al}
\end{align*}
with the key Lemma \ref{keyLemma} in the last step to cancel of $e^{\al \intto Q_2 ~ds}$ using the factor $X_2(T_0)^{\al}$. 

So for $v_2 \left[f_2+f_1 \expo{\intto v_1 f_1}\intto v_1 f_2\right]$ we obtain the upper bound
\begin{align*}
v_2 g  X_2(T_0)^{1-2 \al}\de_2^{\al} &\le g\, e^{\al \intto Q_2} X_2(T_0)^{1-2 \al}\de_2^{\al} X_1(T_0)^{1-\al}\left[M|\log d(\bX)|+\de_1^{\al/2} d(\bX)^{-1+\al/2}\right] \\
&\leq g\, \de_2^{2\al} X_2(T_0)^{1-3 \al}\de_1^{1-\al} \left[M|\log d(\bX)|+\de_1^{\al/2} d(\bX)^{-1+\al/2}\right] 
\end{align*}
using the key Lemma \ref{keyLemma} again to cancel $e^{\al \intto Q_2}$. Thus we see that 
\begin{align*}
\expo{\intto v_2 \left[f_2+f_1 \expo{\intso v_1 f_1} \intso v_1 f_2\right]} \le g
\end{align*}
Finally, by \eqref{lemEstH_eq1} and Lemma \ref{lem11}
\begin{align*}
&\intto v_2 \left[f_0+f_1 \expo{\intso v_1 f_1}\intso v_1 f_0\right] \le\\
&\,\,g \|f_0\|_{\infty}(t) \intto v_2 e^{\al \intso Q_2 \,d\tau} ds \le g \|f_0\|_{\infty}(t) e^{2\al \intto Q_2\, ds}. 
\end{align*} 
Thus, in total we get
\begin{align}
\nn f_0+f_1 \expo{\intto v_1 f_1}\intto v_1 f_0 &\le g \|f_0\|_{\infty}(t) e^{\al \intto Q_2\, ds}\\
\label{H0part2}f_2+f_1 \expo{\intto v_1 f_1}\intto v_1 f_2 &\le g X_2(T_0)^{1-2\al}\de_2^\al\\
\label{H0part3}\expo{\intto v_2 \left[f_2+f_1 \expo{\intso v_1 f_1} \intso v_1 f_2\right]} &\le g\\
\label{H0part4}\intto v_2 \left[f_0+f_1 \expo{\intso v_1 f_1}\intso v_1 f_0\right] &\le g \|f_0\|_{\infty}(t) e^{2\al \intto Q_2~ ds}.
\end{align}
Note that 
\begin{align*}
\eqref{H0part2}\times\eqref{H0part4}\le g \, X_2(T_0)^{1-4\al}\de_2^{4\al}\|f_0\|_{\infty}(t)\le g
\end{align*}
using again the key Lemma to get rid of the factor $e^{2\al \intto Q_2}$, and in the
very last step we used $\al\in (0, 1/4)$ and Lemma \ref{lemf2f0}.
Combining the inequalities \eqref{H0part2}-\eqref{H0part4} gives
\begin{align*}
\left(f_2+f_1 \expo{\intto v_1 f_1}\intto v_1 f_2\right) \\
\times \expo{\intto v_2 \left[f_2+f_1 \expo{\intso v_1 f_1}\intto v_1 f_2\right]} \\
 \times \intto v_2 \left[f_0+f_1 \expo{\intso v_1 f_1}\intto v_1 f_0\right]\\
\le g X_2(T_0)^{1-4 \al}\de_2^{4\al} \|f_0\|_{\infty}(t)  \le g \|f_0\|_\infty(t)
\end{align*}

In view of \eqref{eq_H}, \eqref{lemEstH_eq3} now follows.
\end{proof}
\subsection{Proof of the main technical theorem}
\begin{proof}[Proof of theorem \ref{mainTechnicalThm}]
At time $t=T$, any
$x\in D$ is occupied by a particle, i.e. $x = \bX(T)$ for some particle trajectory. Let us write 
\begin{align*}
\d_{x_j} \om(\bX(t), t) = \xi_j(t) 
\end{align*}
along that particle trajectory, and so by \eqref{ode_ineq5},
\begin{align*}
|\xi_1(t)|&\leq (H f_0)(t) + e^{A} \intto |a| e^{-A} (H f_0)(s)~ ds= (H f_0)(t) + e^{A} \intto v_1 (H f_0)(s)~ds.
\end{align*}
First note that by Lemmas \ref{lemEstH}, \ref{lemf2f0}, \ref{lem11} and \ref{lemExponential}
\begin{align*}
e^{A} \intto v_1(s) (H f_0)(s)~ ds &\le e^{A} g\, \|f_0\|_{\infty}(t) \intto v_1e^{\al\intso Q_2~d\tau}~ds \le  g\, R X_2(T_0)^{1-\al} e^{A}\\
&\le  g\, R e^{-(1-\al)\intto Q_2~ds} e^{\intto Q_2~ds}\le g\, Re^{\al\intto Q_2~ds}.
\end{align*} 

Moreover again by Lemmas \ref{lemEstH}, \ref{lemf2f0}
\begin{align*}
(H f_0)(t) &\leq  g\, Re^{\al \intto Q_2~ds}.
\end{align*}
This results in
\begin{align}\label{eq_720}
|\xi_1(t)| &\leq g\, Re^{\al \intto Q_2~ds}.
\end{align}
Next we estimate $|\xi_1(t)|X_1(t)^\al$.
First we use \eqref{eq_720} and 
insert $X_1(t)$ from \eqref{X2}. Then 
Lemma \ref{lemExponential} allows us to replace $Q_1$ by $Q_2$ in 
the one of the arguments of the exponential function, so we get the estimate
\begin{align}
\begin{split}
|\xi_1(t)|X_1(t)^\al &\le g\, R X_1(t)^\al e^{\al \intto Q_1~ ds}
\le g\,R\de_1^\al e^{-\al \intto Q_1~ ds}e^{\al \intto Q_2~ ds}\\
&\le g\,R\de_1^\al e^{-\al \intto Q_2~ ds}e^{\al \intto Q_2~ ds}
\le g\,R\de_2^\al
\end{split} 
\label{proofMainTechnicalThmEq5} 
\end{align}
since $\de_1<\de_2$. In fact, this was the most critical estimate in the whole proof of the main result, since the dangerous factor $e^{A}$ was barely cancelled. 

We now derive a similar estimate for $|\xi_2(t)|X_2(t)^\al$. From the second line of \eqref{ode_ineq5} and the assumptions on initial conditions, 
\begin{align}\label{proofMainTechnicalThmEq3}
|\xi_2(t)| &\leq R e^{-A} + e^{-A} \intto v_2 \,H f_0~ ds+ e^{-A} \intto e^{2 A} |b| \intso v_1\, H f_0~ds. 
\end{align}
By Lemma  \ref{lemEstH} and \ref{lemf2f0} $Hf_0(s)$ has the upper bound \begin{align*}
&R\,g\,X_2(T_0)^{1-\al}e^{\al\intso Q_2~d\tau}.
\end{align*} 
Therefore the second summand can be estimated as follows
\begin{align*}
e^{-A} \intto v_2 \,H f_0~ ds&\le e^{-A}R\,g\,\intto v_2 e^{\al\intso Q_2~d\tau}~ds
\le R\,g\,e^{-A}e^{2\al\intto Q_2~ds}\\
&\le R\,g\,e^{(-1+2\al)\intto Q_2~ds}\le g\,R,
\end{align*}
where we also used $X_2(T_0)\le\de_2$, \eqref{estintv22}, Lemma \ref{lemExponential} and $\al<1/4$.

\noindent For the third summand of \eqref{proofMainTechnicalThmEq3} we use the upper bound for $Hf_0$ again:
\begin{align*}
 e^{-A} \intto e^{2 A} |b| \intso v_1\, H f_0~ds&\le g\,R\,X_2(T_0)^{1-\al} e^{-A} \intto e^{2 A} |b| \intso v_1\,e^{\al\int_{T_0}^\tau Q_2~d\zeta}~d\tau ~ds\\
&\le g\,R\,\de_2^{1-\al} e^{-A} \intto e^{2 A} |b| ~ds.
\end{align*}
In the last estimate \eqref{estintv1} was used. Now note that by \eqref{lemf1_claim} 
\begin{align*}
 \intto e^{2 A} |b| ~ds&\le g\, e^{(1+\al)\intto Q_2~ds}.
\end{align*}
After combining this with the $e^{-A}$ factor we see that we can bound the third summand by $g\,R$, i.e.~$|\xi_2(t)|\leq g\,R$. This means that
\begin{align}\label{proofMainTechnicalThmEq4}
|\xi_2(t)|X_2^{\al} \leq g R \de_2^{\al}.
\end{align} 
The inequalities \eqref{proofMainTechnicalThmEq4} and \eqref{proofMainTechnicalThmEq5} imply
\begin{align*}
M_{D}(T) \leq g(\al, \beta, \bde, M) R \de_2^{\al} + \|\om\|_\infty =: \cN(R, \al, \beta, \bde, M).
\end{align*}
It remains to show that $\cN$ is a harmless nonlinearity. Therefore, let $\al, \beta, R$ be given.
Recall that $g$ has the property that $g(\al, \beta, \de_2^p, \de_2, R)$ is bounded as $\de_2 \to 0$ for all $p>p_0$
with some $p_0 > 0$. Hence
\begin{align*}
g(\al, \beta, \de_2^p, \de_2, R) R \de_2^{\al} +\|\om\|_\infty< R
\end{align*}
 for sufficiently small $\de_2 > 0$ and $R > \|\om\|_\infty$.
\end{proof}

\section{Proof of the main result}

\begin{proof}[Proof of Theorem \ref{main}]
Let 
$\al\in (0, \frac{1}{4})$, $0<\de_3<1/2$ and $R > \|\om\|_\infty$ be a given nonnegative number.
Fix small positive $\de_1, \de_2$ such that the following set of inequalities hold true:
\begin{align}
\de_1, \de_2 \leq \rho, ~\beta_0 - A |\bde|^{1-\al} R \ge \half \beta_0. \label{mainThm_ineq3}\\
M_{D}(0) < R, ~\left|\frac{\d \om_0(x)}{\d x_1}\right|\le R x_2^{1-\al}, ~\left|\frac{\d \om_0(x)}{\d x_2}\right|\le R \quad (x\in D)\label{mainThm_ineq4}
\end{align}
where $A, \beta_0, \rho$ are the numbers from the Definition \ref{defhyperbolicflow} of the hyperbolicity of the flow.
Note that the box can be chosen so small that that \eqref{mainThm_ineq4} holds. This a 
consequence of $\frac{\d \om_0}{\d x_2}(0, x_2)=\frac{\d \om_0}{\d x_1}(x_1, 0)=0$ and the
$C^2$-smoothness of $\om_0$.

\emph{Claim: } If the box $\dD$ satisfies the controlled feeding conditions with parameter $R$,
then we have the bound
\begin{align}\label{mainThm_ineq1}
M_{D}(t) \leq R\quad (t\in [0, \infty)).
\end{align} 

Assume \eqref{mainThm_ineq1} is not true for all times, i.e.~there is a time $\til T$ such that $M_{D}(\til T) > R$.  Since the solution $\om$ is sufficiently smooth in time by assumption, $M_D(t)$ is a continuous function of $t$. Because
$M_{D}(0) < R$, by the intermediate value theorem, 
there exists a time $T\in(0,\til T)$ such that $M_{D}(t) < R$ holds
on $[0, T)$ and $M_{D}(T)=R$. Observe also that automatically $M_{\dD}(t) \leq R$ for $t\leq T$.

Now note that by \eqref{mainThm_ineq3},
the flow is $\half \beta_0$-hyperbolic in the box $D$ on the time interval $[0, T]$.
This can be seen as follows. Because of \eqref{condDelta} and the feeding conditions,
 $M(x,t)\le M_{\dD}(t)\le R$ for all $x\in D$ and $t\in[0,T]$. Thus by \eqref{mainThm_ineq3}
\begin{align*}
Q_i(x,t)&= Q_i(x,t)+A|x|^{1-\al}M(x,t)-A|x|^{1-\al}M(x,t)\ge \beta_0-\half \beta_0.
\end{align*}

Upon shrinking $\de_1$ and $\de_2$ further (if necessary) and using \eqref{mainThm_ineq4} we can achieve that the assumptions of Theorem \ref{mainTechnicalThm} are satisfied, the arguments in Section \ref{sec:main} hold and for the harmless nonlinear function $\cN$ from theorem \ref{mainTechnicalThm} the following inequality is true:
\begin{align}
\cN(R, \al, \half\beta_0, \de_1, \de_2, R) &< R. \label{mainThm_ineq2}
\end{align}
From now on $\bde$ is fixed. 

On the one hand, on $[0, T]$, we have $M_{D}(t) \le R$.
But applying Theorem \ref{mainTechnicalThm} with $K=R$ and \eqref{mainThm_ineq2}, we get
\begin{align*}
M_{D}(T) \leq \cN(R, \al, \half\beta_0, \de_1, \de_2, R) < R,
\end{align*}
a contradiction. This proves our claim \eqref{mainThm_ineq1}.

Now we prove the exponential bound on the gradient growth.
At an arbitrary time $t \geq 0$, each $x \in D$ is occupied by a particle $\bX(t)$ that has entered the box at some earlier
time $T_0$, or $T_0=0$ if the particle started in $D$ at $t=0$. The same calculation leading to \eqref{proofMainTechnicalThmEq5} gives
\begin{align*}
\left|\frac{\d \om}{\d x_1}(\bX(t), t)\right|&\leq g\, R e^{\al \intto Q_2~ ds}.
\end{align*} 
for all $t\in [T_0, T_e]$.
We apply now Lemma \ref{lemUpperboundQ}:
\begin{align*}
\intto Q_2~ ds &\ale  (\|\om\|_{\infty}+ R(|\bde|+\de_3)^{1-\al}) (t-T_0) + \|\om\|_{\infty} \intto |\log d(\bX)| ~ds . 
\end{align*}
The integral containing the logarithmic term can be estimated using the familiar splitting at $T_1$ and gives
\begin{align*}
\intto |\log d(\bX)|~ds &\ale |\log \de_2|  (t - T_0)+|\log \de_2|. 
\end{align*}
Thus, finally, 
\begin{align*}
\left|\frac{\d \om}{\d x_1}(\bX(t), t)\right| \leq C(\al, \beta, \bde, \de_3, R,\|\om\|_\infty) e^{\al(C\|\om\|_{\infty}+ R(|\bde|+\de_3)^{1-\al}+|\log \de_2|) t}.
\end{align*}
The derivative in $x_2$-direction is bounded by $g\,R$ as seen in the proof of Theorem \ref{mainTechnicalThm}.
This concludes the proof of Theorem \ref{main}.
\end{proof}

\begin{remark}
Our main result remains valid if instead of $\om_0 \in C^2$ we only 
assume $\om_0 \in C^{1, \ga}$ with $\ga \in (3/4, 1)$ (note that
\eqref{mainThm_ineq4} is still true in this case). Recall that in  
\cite{Zlatos} a solution in $C^{1, \ga}$ was constructed such that
the gradient growth close to the origin is at least exponential.
This allows us to state the following interesting \emph{conditional result}:

\begin{cor}
Suppose a solution $\om$ in $C^{1, \ga}$ as in
 \cite{Zlatos} exists that satisfies the feeding conditions
in some box. Then exponential gradient growth near the origin
is optimal in the class of $C^{1, \ga}$-solutions.
\end{cor}

\end{remark}

\begin{remark}\label{aboutTimeDepFeeding}
At this point, we address the difficulties in applying the techniques developed here to the case of 
time-dependent feeding. Assume for the sake of the discussion that in \eqref{eqfeeding1} we replace
the constant $R$ by
\begin{align*}
R(t) = R_1 t + R_2
\end{align*} 
with positive constants $R_1, R_2$. As a direct consequence, $f_0$ grows in time. The corresponding 
inequality in Lemma \ref{lemf2f0} will roughly read
\begin{align*}
f_0(t) \ale g R(T_0) X_2(T_0)^{1-\al}.
\end{align*}
This produces, for example, via \eqref{lemEstH_eq3} a growth in our bounds for $M_{\dD}(t)$, so that 
$M_{\dD}(t)$ cannot be bounded by a time-independent constant anymore. The consequences are as follows:
\begin{itemize}
\item The hyperbolicity condition \eqref{Hyperbolicity} is no longer sufficient to stabilize the flow. It has to be 
considerably strengthened, for example by requiring the flow to be at least $\beta$-hyperbolic from the outset. Lemma
\ref{lemExponential} does not seem to go through, so one may have to strengthen the condition even further, e.g. by
requiring
\begin{align}\label{strongHyp}
a(x, t) \geq \beta > 0.
\end{align}
This, however, has the disadvantage that we have no sufficient condition on the initial data to
justify the validity of \eqref{strongHyp}.
\item A destabilizing effect is also felt in all estimates of Lemma \ref{lem11}, especially when 
the now time-dependent bound for $M_{\dD}(t)$ is integrated along a particle trajectory with
a growing factor (e.g. $\exp(\al \int_{T_0}^t Q_s d\tau)$). This leads to worse estimates
for the quantities $f_1, f_2$ and hence for $H f_0$, which are a vital part of the main argument.
\end{itemize}
In this paper, we leave the problem of finding a suitable treatment for time-dependent feeding open.
\end{remark}

\section{Acknowlegdements}
The authors cordially thank A. Kiselev for suggesting the problem and a great number of helpful discussions,
as well the anonymous reviewers for their careful reading of the manuscript and helpful comments.
VH would like to express his gratitude to the Deutsche Forschungsgemeinschaft (German Research Foundation),
without whose financial support (FOR HO 5156/1-1 and FOR HO 5156/1-2) the present research could not have been undertaken.
VH also acknowledges partial support by NSF grant NSF-DMS 1412023.

\section{Appendix}

\subsection{Appendix A}

We use the Kronecker symbol
\begin{align*}
\de_{ij}&=\left\{
\begin{array}{cc}
1& \text{ if }i=j\\
0& \text{ if }i\neq j
\end{array}
\right. .
\end{align*}

\begin{prop}\label{prop:A1}
For all $x,y\in\ov,~x\neq y$ the following estimates hold.
\begin{align*}
\left|G_i^k(x,y)\right|&\ale|y-x|^{-1}x_i^{-1} \\
\left|\frac{\d G_i^k}{\d x_j}(x,y)\right|&\ale|y-x|^{-3}
\end{align*}
$(i, k = 1, 2)$.
\end{prop}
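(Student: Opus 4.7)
\medskip
\noindent\textbf{Proof proposal.} My plan is to prove both estimates by elementary inequalities that exploit the double-reflection structure of the kernels $G_i^k$. The key geometric observations, valid for $x,y\in\ov$ (both coordinates nonnegative), are as follows. First, each reflected distance dominates the relevant coordinate of $x$:
\[
|y-\til x|\ge x_1,\qquad |y+x|\ge \max\{x_1,x_2\},\qquad |y-\ba x|\ge x_2.
\]
Second, each such reflected distance also dominates $|y-x|$, since $(y_1+x_1)^2\ge (y_1-x_1)^2$ and $(y_2+x_2)^2\ge (y_2-x_2)^2$. Third, every scalar factor appearing in a numerator is itself bounded by the reflected distance that contains it; for instance $y_1\le y_1+x_1\le|y-\til x|$, $y_2\le y_2+x_2\le|y-\ba x|$, and so on.

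For the first estimate, in each of the four kernels I would bound the two scalar numerator factors by two of the distance factors appearing squared in the denominator and then cancel. For $G_1^1=y_1(y_2-x_2)/(|y-x|^2|y-\til x|^2)$, using $y_1\le|y-\til x|$ and $|y_2-x_2|\le|y-x|$ gives
\[
|G_1^1|\le |y-x|^{-1}|y-\til x|^{-1}\le x_1^{-1}|y-x|^{-1}.
\]
The three remaining kernels follow by the identical recipe, with $|y-\til x|$ replaced by $|y+x|$ or $|y-\ba x|$, whichever dominates the required coordinate $x_i$ and also dominates $|y-x|$.

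For the derivative estimate I would apply the quotient rule to each $G_i^k$. Every resulting term has the form (monomial of low degree in $y_i$ and $y_j\pm x_j$) divided by a product of squared reflection distances with total distance power six. Bounding each numerator factor of type $y_i$ or $y_j+x_j$ by the reflection distance that contains it, bounding each $|y_j-x_j|$ by $|y-x|$, and then applying $|y-\til x|,|y+x|,|y-\ba x|\ge|y-x|$ to collapse the remaining denominator into powers of $|y-x|$, a direct degree count yields $|y-x|^{-3}$ in each case. As a sample, one of the summands of $\partial_{x_1}G_1^1$ is $-2y_1(y_2-x_2)(y_1+x_1)/(|y-x|^2|y-\til x|^4)$, which by $y_1,y_1+x_1\le|y-\til x|$ and $|y_2-x_2|\le|y-x|$ is bounded by $|y-\til x|^2|y-x|/(|y-x|^2|y-\til x|^4)=|y-x|^{-1}|y-\til x|^{-2}\le|y-x|^{-3}$.

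The main obstacle here is purely organizational: there are eight derivative expressions to check (four kernels and two partial derivatives). All of them are uniform applications of the recipe above, and no delicate cancellations are needed.
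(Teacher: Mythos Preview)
Your proposal is correct and follows essentially the same approach as the paper: the paper also describes the proof as a straightforward calculation using the explicit derivative formulas (recorded in Appendix~B) together with the reflection inequalities $|y-\til x|,\,|y-\bar x|\ge |y-x|$, $|y+x|\ge|y-\bar x|$ and $x_1\le|y-\til x|$, $x_2\le|y-\bar x|$, which are exactly the ingredients you identify. Your sample computation for the term of $\partial_{x_1}G_1^1$ matches the paper's intended argument.
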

The proofs are straightforward calculations based on the identities in Appendix B, and the reflection inequalities:
\begin{align}\label{reflectionIneq}
|y-\til x| \geq |y-x|,\quad|y-\bar x| \geq |y-x|,\quad |y+x| \geq |y-\bar x|,\quad |y+x| \geq |y-\til x|
\end{align}
holding for $x, y\in \ov$.
Also, use the obvious inequalities
\begin{align*}
x_2 \leq |y-\bar x|, x_1 \le |y-\til x|.
\end{align*}

We observe some useful relations for the kernels $G_i^k$ and their derivatives. Let $G$ stand for any $G_i^k$ and let
\begin{align*}
\Om_x = (-x_1, 1-x_1)\times(-x_2, 1-x_2).
\end{align*}
$G$ has the form $G(x, y) = \tG(y-x, x, y)$, where $\tG(z, \eta, \mu)$ is 
smooth provided $\eta, \mu\in (0,1)^2, z\in \Om_x \setminus \{0\}$. For example, if $G=G_1^1$ then
\begin{align*}
\tG(z, \eta, \mu)=\frac{\mu_1 z_1}{|z|^2 |\mu-\til \eta|^2}.
\end{align*}
Note that for $x\neq y, x, y\in (0, 1)^2$,
\begin{align}\label{dev_eq4}
\begin{split}
(\d_{x_j} G)(x, y) = (\d_{\eta_j}\tG)(y-x, x, y) - (\d_{z_j}\tG)(y-x, x, y)\\
(\d_{y_j} G)(x, y) = (\d_{\mu_j}\tG)(y-x, x, y) +(\d_{z_j}\tG)(y-x, x, y)
\end{split}
\end{align}
so that
\begin{align*}
(\d_{x_j} G)(x, y) = -(\d_{y_j} G)(x, y) +(\d_{\eta_j}\tG)(y-x, x, y) + (\d_{\mu_j}\tG)(y-x, x, y).
\end{align*}
Moreover, we always have
\begin{align}\label{dev_ineq1}
|\tG(z, x, y)|, \left|\frac{\d \tG}{\d \eta_j}(z, x, x+z)\right|, \left|\frac{\d \tG}{\d \mu_j}(z, x, x+z)\right|\leq C(\eta)|z|^{-1}.
\end{align}
where $C(\eta)$ is uniformly bounded if $\eta$ varies in a compact subset of $(0, 1)^2$.

\begin{prop}\label{prop:A7}
\begin{align*}
\frac{\d G_i^k}{\d x_j}=-\frac{\d G_i^k}{\d y_j}+x_i^{-2}\de_{ij}\cO(|y-x|^{-1})
\end{align*}
\end{prop}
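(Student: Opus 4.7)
The plan is to start from the chain-rule identity \eqref{relDevXY} already derived in the excerpt, which rearranges to
$$\d_{x_j}G_i^k+\d_{y_j}G_i^k=(\d_{\eta_j}\tG)(y-x,x,y)+(\d_{\mu_j}\tG)(y-x,x,y).$$
Thus the task reduces to estimating $\d_{\eta_j}\tG+\d_{\mu_j}\tG$. The crucial structural observation is that the translation-invariant factor $1/|z|^2$ in $\tG$ depends only on $z$ and not on $\eta$ or $\mu$, and therefore contributes nothing to $\d_{\eta_j}+\d_{\mu_j}$. Consequently only the reflection-shifted distance $|\mu-R\eta|^{-2}$ (with $R$ one of the three reflections $\til{\,\cdot\,},\ba{\,\cdot\,},-I$ appearing in the specific kernel) and the bilinear polynomial numerator can produce non-vanishing contributions.

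My second step would be to carry out these computations algebraically, using the elementary formula that for a reflection $Rx=(\si_1x_1,\si_2x_2)$ with sign pattern $(\si_1,\si_2)\in\{\pm 1\}^2$ the combined derivative $(\d_{x_j}+\d_{y_j})|y-Rx|^2=2(1-\si_j)(y_j-\si_jx_j)$ vanishes when the $j$-th coordinate is not reflected ($\si_j=1$) and equals $4(y_j+x_j)$ when it is ($\si_j=-1$); a similar direct calculation handles $(\d_{x_j}+\d_{y_j})$ applied to the bilinear prefactor $y_m(y_l\pm x_l)$. Going case by case through $G_1^1,G_1^2,G_2^1,G_2^2$ and each direction $j\in\{1,2\}$ yields an explicit formula for the residual, in which the direction dependence is manifest: a given contribution survives precisely when $j$ coincides with a reflected coordinate of the particular kernel, or with the component picked out by the numerator.

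In the third step I would bound the explicit residual using the pointwise inequalities valid on $\ov$: the uniform estimate $|y-Rx|\geq|y-x|$, combined with the reflection-specific boundary bounds $|y-\til x|\geq x_1$, $|y-\ba x|\geq x_2$, $|y+x|\geq\max\{x_1,x_2\}$. Each surviving term is of the form $\mathrm{poly}(x,y)/(|y-x|^{2}|y-Rx|^{2p})$ with $p\in\{1,2\}$ and numerator bounded by products of $|y-Rx|$-factors; after the obvious simplifications this produces a bound of order $x_i^{-2}|y-x|^{-1}$ in the diagonal case $j=i$, while in the off-diagonal case either the polynomial cancellations eliminate the singular contribution entirely or reduce the remainder to a strictly less singular term, a structural fact encoded by the Kronecker factor $\de_{ij}$ on the right-hand side of the proposition.

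The main obstacle is pure bookkeeping: one must go through each of the eight kernel-direction pairs and verify that the correct reflection bound provides the $x_i^{-2}$ weight. Beyond the structural observation that $1/|z|^2$ drops out of $\d_{\eta_j}+\d_{\mu_j}$, and the elementary algebra sketched above, no further ingredient is required.
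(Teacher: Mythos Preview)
Your approach is essentially the paper's: both start from the identity \eqref{relDevXY} and then carry out direct computation and estimation of the residual $\d_{\eta_j}\tG+\d_{\mu_j}\tG$ using the reflection inequalities. Your write-up simply supplies the details behind the paper's one-line ``straightforward calculation using \eqref{relDevXY}.''

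One caveat worth flagging in your bookkeeping: your ``crucial structural observation'' that the factor $1/|z|^2$ drops out of $\d_{\eta_j}+\d_{\mu_j}$ applies only to the two kernels $G_1^1,G_2^2$ that actually contain $|y-x|^{-2}$; the kernels $G_1^2,G_2^1$ involve only the reflected distances $|y+x|,|y-\bar x|,|y-\til x|$ and are smooth at $y=x$. For those two kernels the off-diagonal residual is in fact \emph{not} identically zero (e.g.\ one checks directly from Appendix~B that $\d_{x_1}G_2^1=\d_{y_1}G_2^1$, so their sum is $2\d_{x_1}G_2^1\neq 0$). Your hedge ``or reduce the remainder to a strictly less singular term'' is therefore the accurate description: the residual for $G_1^2,G_2^1$ is bounded near $y=x$ (hence certainly $\cO(|y-x|^{-1})$) and causes no trouble in the application (Proposition~\ref{prop:A3}), even though it does not literally vanish as the Kronecker $\de_{ij}$ in the statement would suggest. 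Just be aware of this when you execute the eight case checks.
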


\begin{proof}
This is a tedious, but straighforward estimation using the identities of Appendix B
and the reflection inequalities. 
\end{proof}


\begin{prop}[Derivatives of $Q_i$]\label{prop:A2}
\begin{align*}
\frac{\d Q_i}{\d x_j}&=
c_0\pvint_{[0,1]^2}\left[\frac{\d G_i^1}{\d x_j}+\frac{\d G_i^2}{\d x_j}\right] \om(y)~dy\\
& - \om(x)\lim_{\de \to 0^+}\int_{\d B(x, \de)} G_i^i\cdot\nu_j~d\si+\frac{\d Q_i^r}{\d x_j}
\end{align*}
\end{prop}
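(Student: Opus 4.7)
The plan is to differentiate $Q_i(x) = c_0\int_\ov [G_i^1+G_i^2]\om\,dy + Q_i^r(x)$ kernel by kernel, isolating the only genuine diagonal singularity. Of the four kernels $G_i^k$, only $G_i^i$ is singular at $y=x$ for $x$ in the interior of $(0,1)^2$; the complementary kernel $G_i^{3-i}$ is smooth on a neighborhood of the diagonal (since the relevant factor $|y+x|$, $|y-\til x|$ or $|y-\ba x|$ stays bounded below there), as is the integrand of $Q_i^r$. Both can therefore be differentiated classically under the integral using the pointwise bounds of Proposition \ref{prop:A1} together with dominated convergence.

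For the singular piece $Q_i^{\mathrm{sing}}(x) := \int_\ov G_i^i(x,y)\om(y)\,dy$, the bound $|G_i^i\om|\ale \|\om\|_\infty\, x_i^{-1}|y-x|^{-1}$ (Proposition \ref{prop:A1}) shows the integrand is locally integrable, so by dominated convergence
\begin{equation*}
Q_i^{\mathrm{sing}}(x) = \lim_{\de\to 0^+}\int_{\ov\setminus B(\de,x)} G_i^i(x,y)\om(y)\,dy.
\end{equation*}
For each fixed small $\de>0$, the integrand on $\ov\setminus B(\de,x)$ is smooth and bounded, and the Reynolds transport formula yields
\begin{equation*}
\frac{\d}{\d x_j}\int_{\ov\setminus B(\de,x)} G_i^i\,\om\,dy
= \int_{\ov\setminus B(\de,x)}\frac{\d G_i^i}{\d x_j}\,\om\,dy
- \int_{\d B(\de,x)} G_i^i\,\om\,\nu_j\,d\si,
\end{equation*}
where $\nu$ denotes the outward unit normal to $B(\de,x)$; the minus sign appears because $\nu$ points inward with respect to the integration domain while the moving boundary $\d B(\de,x)$ translates with velocity $e_j$ as $x$ is perturbed.

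The next step is to let $\de\to 0^+$. For the boundary term, continuity of $\om$ gives $\om(y) = \om(x) + o(1)$ uniformly on $\d B(\de,x)$, and the integral $\int_{\d B(\de,x)} G_i^i\,\nu_j\,d\si$ remains bounded (since $|G_i^i|\ale \de^{-1}$ on $\d B(\de,x)$ while the surface measure is $\ale \de$), so this term converges to $\om(x)\lim_{\de\to 0^+}\int_{\d B(\de,x)} G_i^i\,\nu_j\,d\si$. For the bulk term I would invoke Proposition \ref{prop:A7} to decompose
\begin{equation*}
\frac{\d G_i^i}{\d x_j} = -\frac{\d G_i^i}{\d y_j} + x_i^{-2}\de_{ij}\,\cO(|y-x|^{-1}).
\end{equation*}
The $\cO(|y-x|^{-1})$ remainder is absolutely integrable and converges by dominated convergence, while the $-\d_{y_j}G_i^i$ piece, after integration by parts against $\om$ on $\ov\setminus B(\de,x)$, exhibits the principal-value convergence of the bulk integral to $\operatorname{PV}\!\int \d_{x_j}G_i^i\,\om\,dy$. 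Interchanging $\lim_{\de\to 0^+}$ with $\d/\d x_j$ is then justified once these $\de$-regularized derivatives are shown to converge locally uniformly in $x$.

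The main obstacle is precisely the principal-value convergence of the bulk integral, since the bare bound $|\d_{x_j}G_i^i|\ale |y-x|^{-3}$ from Proposition \ref{prop:A1} is not locally integrable near the diagonal. Proposition \ref{prop:A7} is the crucial identity that makes the requisite cancellation visible by trading an $x$-derivative for a $y$-derivative modulo an integrable remainder; this trade is what simultaneously produces both the principal-value integral and the extra boundary term appearing in the stated formula.
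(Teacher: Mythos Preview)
Your approach is correct but organized differently from the paper's. The paper does not separate $G_i^i$ from $G_i^{3-i}$; instead it substitutes $z=y-x$ so that the singularity sits at the \emph{fixed} point $z=0$ and the domain $\Om_x$ moves. Because $\tG$ and its parametric $x$-derivatives are all $\cO(|z|^{-1})$, one may differentiate the full integral $\int_{\Om_x}\tG(z,x,x+z)\om(x+z)\,dz$ directly under the integral sign; this produces a term $\int_{\Om_x}\tG\,\d_{z_j}\om$, which after excising $B(0,\de)$ and integrating by parts yields the principal-value integral and the $\d B(0,\de)$ boundary contribution, while the $\d\Om_x$ boundary terms cancel exactly. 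You instead excise the moving ball $B(\de,x)$ in the original $y$-variables and invoke Reynolds transport to get the boundary term up front, then appeal to Proposition~\ref{prop:A7} plus integration by parts only to establish the principal-value convergence of the bulk. Both routes rest on the same $\d_{x_j}G\leftrightarrow -\d_{y_j}G$ identity, but the paper's change of variables has the advantage of avoiding the interchange of $\lim_{\de\to 0^+}$ with $\d/\d x_j$ that you correctly flag at the end but do not carry out.

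One small correction: citing Proposition~\ref{prop:A1} alone does not justify differentiating the non-singular kernel $G_i^{3-i}$ under the integral, since that proposition only gives $|\d_{x_j}G_i^k|\ale |y-x|^{-3}$, which is not integrable. You need precisely the smoothness of $G_i^{3-i}$ near the diagonal that you already noted in your first paragraph.
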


\begin{proof}
Write  $(G_i^1+G_i^2)(x, y):=G(x, y)$. $G$ has again the form $G(x, y) = \tG(y-x, x, y)$, where $\tG(z, \eta, \mu)$ is 
smooth provided $\eta, \mu\in (0,1)^2, z\in \Om_x \setminus \{0\}$. Also \eqref{dev_eq4}, \eqref{dev_ineq1} hold for $\tG$.
Now 
\begin{align}
 \label{dev_eq3}\frac{\d}{\d x_j} \int_{\Om_x} \tG(z, x, x+z) \om(x+z)~dz= \int_{\Om_x} \tG(z, x, x+z) \frac{\d \om}{\d z_j}(x+z)~dz\\
+\int_{\Om_x} \d_{x_j}(\tG(z, x, x+z)) \om(x+z)~dz\nn \\
- \int_{\d \Om_x} \tG(z, x, x+z) \om(x+z)\nu_j~d\si\nn
\end{align}
where $\nu_j$ denotes the $j$-th component of the unit outer normal. This is a standard differentiation result (note the
bounds \eqref{dev_ineq1}).

Now consider the integral in the line \eqref{dev_eq3}, exclude the singularity and integrate by parts:
\begin{align}\label{dev_eq5}
\int_{\Om_x} \tG(z, x, x+z) \frac{\d \om}{\d z_j}(x+z)~dz =- \int_{\Om_x\setminus B(0, \de)} \d_{z_j}(\tG(z, x, x+z)) \om(x+z)~dz\\
+ \int_{\d \Om_x} \tG(z, x, x+z) \om(x+z)\nu_j~d\si \nn\\
- \int_{\d B(0, \de)} \tG(z, x, x+z) \om(x+z)\nu_j~d\si\nn
\end{align}
Observe that by \eqref{dev_eq4},
\begin{align*}
-\d_{z_j}(\tG(z, x, x+z))+\d_{x_j}(G(z, x, x+z)) = (\d_{x_j}G)(z, x, x+z).
\end{align*}
So combining \eqref{dev_eq3} and \eqref{dev_eq5}, we finally get
\begin{align*}
\frac{\d}{\d x_j} \int_{\Om_x} \tG(z, x, x+z) \om(x+z)~dz =  -\int_{\d B(0, \de)} \tG(z, x, x+z) \om(x+z)\nu_j~d\si\\
+ \int_{\Om_x}  (\d_{x_j}G)(z, x, x+z) \om(x+z)~dz + \int_{B(0, \de)} \d_{x_j}(\tG(z, x, x+z)) \om(x+z)~dz.
\end{align*}
Replacing $x+z$ by $y$ and sending $\de\to 0$ yields the statement.
\end{proof}

We define
\begin{align*}
d_1(x) := \min\{x_1, x_2\}
\end{align*}
which is the distance of the point $x$ to the coordinate axes.
Observe also that
\begin{align}\label{relXY}
\half x_r \le y_r \le \frac{3}{2}x_r 
\end{align}
for $y\in B(x, \half d_1(x)), r=1, 2$. For the entire appendix, we shall write that
$M = M_{\dD}$, i.e.
\begin{align*}
\left|\frac{\d\om}{\d x_j}(x)\right| \le M x_j^{-\al}\quad (x \in \dD, j=1, 2)
\end{align*}
holds, implying also the inequalities
\begin{align}\label{estOm}
|\om(x)| \ale M x_j^{1-\al} \quad (x\in \dD, j=1, 2)
\end{align}
(by the fact that $\om$ vanishes identically on the coordinate axes).

\begin{prop}\label{prop:A6}
For $i\neq j$, we have
\begin{align*}
\left|\frac{\d (G_{i}^1+G_{i}^2)}{\d x_j}\right|  \ale  x_i^{-\ga_1-\ga_2} x_j^{\ga_2} |y-x|^{-(3 - \ga_1)}.
\end{align*}  
where $\ga_1, \ga_2\in \R, 0\le \ga_1+\ga_2\leq 1$.
\end{prop}
\begin{proof}
We do the proof in the case $i=2, j=1$, the other case being analogous.
The proof of the proposition is based on a cancellation property of the kernels $G_2^1$ and $G_2^2$ and 
requires quite tedious computations. First calculate the sum of $\d_{x_1} G_2^2$ and $\d_{x_1} G_2^1$ and see that
it can be grouped into three the expressions
\begin{align*}
\frac{2 y_2(y_1-x_1)^2}{|y-x|^4|y-\bar x|^2}-\frac{2 y_2(y_1+x_1)^2}{|y+x|^2|y-\til x|^4} &= (A)\\
\frac{2 y_2(y_1-x_1)^2}{|y-x|^2|y-\bar x|^4}-\frac{2 y_2(y_1+x_1)^2}{|y-\til x|^2|y + x|^4} &= (B)\\
\frac{y_2}{|y - \til x|^2|y+x|^2}-\frac{y_2}{|y-x|^2|y-\bar x|^2} &= (C)
\end{align*}
These can be further written as
\begin{align*}
\frac{2 y_2(y_1-x_1)^2}{|y-\bar x|^2}[|y-x|^{-4}-|y-\til x|^{-4}]+\frac{2 y_2}{|y-\til x|^4}\left(
\frac{(y_1-x_1)^2}{|y-\bar x|^2} - \frac{(y_1+x_1)^2}{|y+x|^2}\right) \\= (1)+(2)\\
\frac{2 y_2}{|y-\bar x|^4}\left[\frac{(y_1-x_1)^2}{|y-x|^2}-\frac{(y_1+x_1)^2}{|y-\til x|^2} \right]
+\frac{2 y_2}{|y-\til x|^2}\left[\frac{(y_1+x_1)^2}{|y-\bar x|^4}-\frac{(y_1+x_1)^2}{|y+x|^4}\right] \\= (3)+(4)\\
\frac{y_2}{|y+x|^2}\left[|y-\til x|^{-2}-|y - x|^{-2}\right]+\frac{y_2}{|y-x|^2}[|y+ x|^{-2}-|y - \bar x|^{-2}]
\\= (5)+(6)
\end{align*}
Let us estimate expression (1). Using $|y-\til x|^2 - |y-x|^2 = 4 x_1 y_1$ and the relations
$y_2 \leq |y-\bar x|, (y_1-x_1)^2\leq |y-x|^2, y_1\leq (y_1+x_1), |y-x|\le|y-\til x|$, we arrive at
\begin{align*}
|(1)| \ale \frac{x_1}{|y-\bar x||y-\til x||y-x|^{2}}.
\end{align*}
Write $\ga = \ga_1+\ga_2$ and noting that $|y-\bar x|\geq x_2^{\ga}|y-\bar x|^{1-\ga}, |y-\til x|\geq x_1^{1-\ga_2}|y-\til x|^{\ga_2}$ 
and the reflection relations $|y-\til x|, |y - \bar x|\geq |y-x|$ for $y\in \ov$, we arrive at
\begin{align*}
|(1)| \ale x_2^{-\ga} x_1^{\ga_2} |y-x|^{-(3 - \ga + \ga_2)}.
\end{align*}

To estimate (2), we use the relation
\begin{align*}
|y+x|^2 (y_1 - x_1)^2 - |y-\bar x|^2 (y_1+x_1)^2 = - 4 x_1 y_1 (y_2+x_2)^2
\end{align*}
and similar estimations as above to arrive at
\begin{align*}
|(2)| \ale \frac{y_2 y_1 x_1 (y_2+x_2)^2}{|y-\til x|^4 |y-\bar x|^2 |y+x|^2} \ale \frac{x_1}{|y-\bar x||y-\til x|^3}\\
\ale x_2^{-\ga} x_1^{\ga_2} |y-x|^{-3 - \ga_2 + \ga}.
\end{align*}
(3)-(6) is mutatis mutandis the same.
\end{proof}

Figure \ref{fig2} illustrates the domains we need in the proof of the following propositions. 
\begin{figure}[htbp]
\begin{center}
\includegraphics[scale=1]{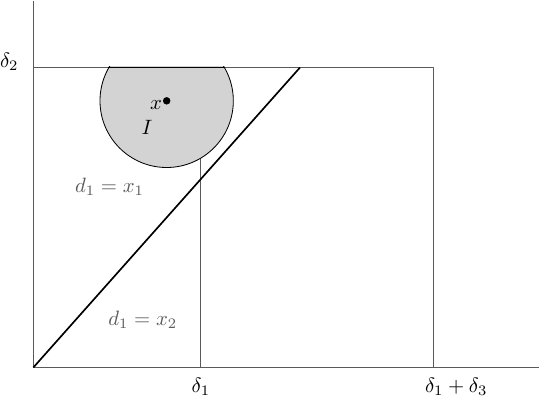}%
\end{center}
\caption{Domains of integration in Proposition \ref{prop:A3}.} \label{fig2}
\end{figure}

\begin{prop}\label{prop:A3} Let $I=B(x, \half d_1(x))\cap \dD$. Then
\begin{align*}
\left|\pvint_{I}\frac{\d(G_i^1+G_i^2)}{\d x_j}\om(y)~dy\right| \ale M x_i^{-\al}(1+\de_{j2}|\log d(x)|)
\end{align*}
\end{prop}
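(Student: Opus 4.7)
Since $i\neq j$, Proposition \ref{prop:A7} simplifies to $\d_{x_j}(G_i^1+G_i^2)=-\d_{y_j}(G_i^1+G_i^2)$, the $\de_{ij}$-correction being absent. Writing $G := G_i^1+G_i^2$, the plan is to transfer the $y_j$-derivative off $G$ and onto $\om$ by integration by parts. Since $\d_{x_j}G$ is only $\cO(|y-x|^{-3})$, hence non-integrable, one excises a ball $B(\eps, x)$ and integrates by parts on $I\setminus B(\eps, x)$; the resulting integrand $G\,\d_{y_j}\om$ is integrable on $I$ by Proposition \ref{prop:A1} and the bound $|\d_{y_j}\om|\le M y_j^{-\al}$, so one can pass to $\eps\to 0^+$ to obtain
\begin{align*}
\pvint_I \d_{x_j}G\,\om\,dy = \int_I G\,\d_{y_j}\om\,dy - \int_{\partial I} G\,\om\,\nu_j\,d\sigma + \lim_{\eps\to 0^+}\int_{\partial B(\eps, x)} G\,\om\,\nu_j^B\,d\sigma,
\end{align*}
where $\nu$ and $\nu^B$ denote the outward normals to $\partial I$ and to $B(\eps, x)$, respectively.

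I would then estimate the three pieces separately, relying throughout on the fact that $y_r\sim x_r$ on $I\subset B(d_1(x)/2, x)$ by \eqref{relXY}, and on the vanishing of $\om$ along the coordinate axes (so $|\om(y)|\ale M d_1(x)^{1-\al}$ on $I$). The interior integral is bounded by $M x_j^{-\al}\int_{B(d_1(x)/2,x)}|y-x|^{-1}dy/x_i\ale M x_j^{-\al}d_1(x)/x_i$, which a short case split on whether $x_i\le x_j$ or $x_j<x_i$, using $d_1(x)/x_i\le 1$, reduces to $\ale M x_i^{-\al}$. The outer boundary $\partial I$ consists of the spherical arc $\{|y-x|=d_1(x)/2\}$ together with possibly a segment of the top edge $y_2=\de_2$ or of the right edge $y_1=\de_1+\de_3$ of $\dD$, according as $B(d_1(x)/2, x)$ protrudes from $\dD$; the ball does not reach the coordinate axes since $d_1(x)/2<d_1(x)$. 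On the spherical arc the bound $\ale M x_i^{-\al}$ is immediate from Proposition \ref{prop:A1} and arc length $\ale d_1(x)$. The right edge has outward normal $\pm e_1$, so it contributes only when $j=1$, and there $|y-x|$ is bounded below by a fixed geometric constant, yielding $\ale M x_i^{-\al}$. The top edge has normal $\pm e_2$, contributing only when $j=2$; on that segment $|y-x|\ge d(x)$ and one-dimensional integration produces $\int_{-d_1(x)/2}^{d_1(x)/2}(z^2+d(x)^2)^{-1/2}dz\ale |\log d(x)|$, accounting precisely for the $\de_{j2}|\log d(x)|$ summand. Finally, only $G_i^i$ is singular at $y=x$ (the other three kernels have their singularities at $\tilde x, \bar x, -x$, outside $\ov$), with leading behavior $G_i^i(x,y)\approx(y_j-x_j)/(4x_i|y-x|^2)$; a direct computation on $\partial B(\eps, x)$ yields the limit $\pi\om(x)/(4 x_i)$, and $|\om(x)|\ale M d_1(x)^{1-\al}$ reduces this to $\ale M x_i^{-\al}$ by the same case split.

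The main subtlety is the bookkeeping on the outer boundary: the logarithm must appear only together with $\de_{j2}$. This is ultimately geometric---among the boundary pieces of $\partial I$, only the top edge $y_2=\de_2$ can approach $x$ (through $d(x)\to 0$), and its outward normal is $\pm e_2$, so it feeds into the boundary integral precisely when $j=2$. The right edge, although it could lie close to $x$ in the $x_1$-direction when $\de_3$ is small, is controlled by a positive distance bounded below in terms of $\de_3$, and moreover its normal kills the $j=2$ contribution. This geometric separation is exactly what forces the asymmetric dependence on $\de_{j2}$ in the estimate.
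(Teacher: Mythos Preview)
Your argument is essentially the paper's: use Proposition~\ref{prop:A7} (with $i\neq j$, so the correction term vanishes) to rewrite $\d_{x_j}G$ as $-\d_{y_j}G$, integrate by parts on $I\setminus B(\eps,x)$, and bound the interior term, the outer boundary, and the inner boundary separately using Proposition~\ref{prop:A1}, \eqref{relXY}, and \eqref{estOm}. Your treatment of the top edge via the explicit integral $\int(z^2+d(x)^2)^{-1/2}dz$ and your identification of the inner-boundary limit as $\pi\om(x)/(4x_i)$ are a bit more explicit than the paper's, but the structure is identical.

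One small correction: your claim that on the right edge ``$|y-x|$ is bounded below by a fixed geometric constant'' is not quite right, since that lower bound is $\de_1+\de_3-x_1>\de_3$, and the paper's convention forbids constants depending on $\de_3$. In fact, the right edge is never reached: since $d_1(x)/2\le x_1/2<\de_1/2$ and the distance from $x\in D$ to the right edge exceeds $\de_3$, the ball $B(\tfrac12 d_1(x),x)$ intersects $y_1=\de_1+\de_3$ only if $\de_1>2\de_3$, which is excluded in the regime of the paper (where $\de_3$ is fixed first and $\de_1$ is chosen small). The paper's proof simply takes $\Sigma$ to lie on the top edge for this reason; your discussion of the right edge is therefore unnecessary, and once removed the argument matches the paper's exactly.
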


\begin{proof}
Let $0<\de<\half d_1(x)$ so small such that $B(x, \de)\subset \dD$. 
By Proposition \ref{prop:A7},
\begin{align}\label{proofOfPropA3eq1}
\begin{split}
&\left|\int_{I\setminus B(x, \de)}\sum_{k=1,2} \frac{\d G_i^k}{\d x_j}\om(y)~dy\right|\ale\left|\int_{I\setminus B(x, \de)}\sum_{k=1,2} \frac{\d G_i^k}{\d y_j}\om(y)~dy\right|+
\left| x_i^{-2} \int_{I\setminus B(x, \de)} \cO(|y-x|^{-1}) \right|
\end{split}
\end{align}
We distinguish the cases $i=j$ and $i\neq j$. First let $i=j$. Integration by parts gives
\begin{align*}
\begin{split}
\text{\eqref{proofOfPropA3eq1}}&\ale\left|\int_{I\setminus B(x, \de)}\sum_{k=1,2} G_i^k\frac{\d \om}{\d y_i}(y)~dy\right| + \left| x_i^{-2} \int_{I\setminus B(x, \de)} \cO(|y-x|^{-1}) \right|\\
&+\sum_{k=1,2} \left(\left|\int_{\d I} G_i^k  \om(y)\nu_i~d\si \right| + \left|\int_{\d B(x, \de)}G_i^k \om(y)\nu_i~d\si\right|\right).
\end{split}
\end{align*}
where $\nu=(\nu_1,\nu_2)$ is the unit outward pointing normal on $\d I$. We first take care of the integral over $\d I$.
Observe that for $x\in D$, $\d I$ is either a full circle is the union of a part of a circle and a flat part $\Sigma$. Hence
\begin{align*}
\int_{\d I} |G_i^k \nu_j|~d\si &\leq \int_{\Sigma} |G_i^k| \de_{2i} ~d\si +\int_{\d B(0, \half d_1(x))} |G_i^k|~d\si
\end{align*}
For all sufficiently small $\vareps>0$,
\begin{align*}
\int_{\Sigma} |G_i^k|~d\si &\le\int_{\Sigma\cap \{|y_1-x_1|\le \eps\}} |G_i^k|~dy_1+\int_{\Sigma\cap \{|y_1-x_1|\ge \eps\}} |G_i^k|~dy_1\\
&\ale\int_{\Sigma\cap \{|y_1-x_1|\le \eps\}} \frac{x_i^{-1}}{|y-x|}~dy_1
+ \int_{\Sigma\cap \{|y_1-x_1|\ge \eps\}} \frac{x_i^{-1}}{|y-x|}~dy_1\\
&\ale x_i^{-1}\int_{x_1-\vareps}^{x_1+\vareps} \frac{1}{|x_2-\de_2|}~dy_1
+ x_i^{-1}\int_{1>|x_1-y_1|>\vareps} \frac{1}{|y_1-x_1|}~dy_1\\
&\ale \frac{x_i^{-1}\vareps}{|x_2-\de_2|}+ x_i^{-1}\int_{\vareps}^1 \frac{1}{y_1}~dy_1\\
&\ale \frac{x_i^{-1}\vareps}{|x_2-\de_2|}+ x_i^{-1}|\log\vareps|.
\end{align*}
Here we used proposition \ref{prop:A1} again.
Choosing $\vareps=|x_2-\de_2|=d(x)$ we get
\begin{align*}
\int_{\Sigma} |G_i^k|~d\si
&\ale x_i^{-1}(1+|\log d(x)|).
\end{align*}
The other part is estimated by (using proposition \ref{prop:A1} again)
\begin{align*}
\int_{\d B(0, \half d_1(x))} |G_i^k|~d\si
&\ale
x_i^{-1}\int_{0}^{2\pi} |y-x|^{-1} d_1(x) ~d\varphi
\ale x_i^{-1}.
\end{align*}
Therefore we get for the integral over $\d I$, using \eqref{estOm} and \eqref{relXY}:
\begin{align*}
\left|\int_{\d I} G_i^k \om(y)\nu_i~d\si \right|&\ale \int_{\d I}  \left| G_i^k\nu_i \right|  \left|\om(y) \right|~d\si
\ale M \int_{\d I}  \left|G_i^k\nu_i \right|  y_i^{1-\al}~d\si\\
&\ale M x_i^{1-\al}\int_{\d I}  \left|G_i^k\nu_i \right| ~d\si
\ale  M x_i^{1-\al}x_i^{-1}(1+\de_{i2}|\log d(x)|).
\end{align*}
Similar estimates yield that the contribution from the integral over 
$\d B(x, \de)$ is $\ale M x_i^{-\al}$, with universal constants independent of $\de$.
For
\begin{align*}
x_i^{-2} \int_{I\setminus B(x, \de)} \cO(|y-x|^{-1}) |\om(y)| dy
\end{align*}
we obtain the upper bound $\ale M x_i^{-\al}$ by the same methods.

For the remaining integral we use \eqref{relXY}:
\begin{align*}
\left|\int_{I\setminus B(x, \de)}G_i^k\frac{\d \om}{\d y_i}(y)~dy\right|&\ale M x_i^{-\al}\int_{I\setminus B(x, \de)}\left|G_i^k\right|~dy
\ale M x_i^{-\al} \int_{I\setminus B(x, \de)}\left|y-x\right|^{-1}x_i^{-1}~dy\\
&\ale M x_i^{-\al} x_i^{-1}\int_{\de}^{d_1(x)}\frac{1}{\rho}\rho ~d\rho \ale M x_i^{-\al}x_i^{-1}\int_{\de}^{d_1(x)}\frac{1}{\rho}\rho ~d\rho\\
&\ale M x_i^{-\al} x_i^{-1}d_1(x).
\end{align*}
Since $d_1(x)\le x_i$ we get:
\begin{align*}
\left|\int_{I\setminus B(x, \de)}G_i^k\frac{\d \om}{\d y_j}(y)~dy\right|\ale M x_i^{-\al}.
\end{align*}
This concludes the case $i= j$.
 
For the case $i\neq j$, we have to use the cancellation provided by Proposition \ref{prop:A6} with $\ga_1=2, \ga_2=-1$.
\begin{align*}
\left|\int_{I\setminus B(x, \de)}\frac{\d(G_i^1+G_i^2)}{\d y_j}\om(y)~ \right|~dy&\ale M \int_{I\setminus B(x, \de)}\left|\frac{\d(G_i^1+G_i^2)}{\d y_j}\right|y_i^{1-\al}~ dy\\
&\ale  M x_i^{1-\al} \int_{I\setminus B(x, \de)} y_i^{-1}y_j^{-1}|y-x|^{-1}  ~dy\\
&\ale  M x_i^{1-\al}  x_i^{-1}x_j^{-1}\int_{\de}^{d_1(x)}\frac{1}{\rho}\rho  ~d\rho\\
&\ale  M x_i^{1-\al}  x_i^{-1}x_j^{-1}d_1(x) \ale  M x_i^{-\al},
\end{align*}
since $d_1(x)\ale x_j$.
\end{proof}
\begin{lem}\label{lem:A2} Let $\ga\in (0,1)$ and $x_1\geq 0$. Then
\begin{align*}
y_1^\ga\le|y_1-x_1|^\ga+x_1^\ga\quad (y_1\geq 0).\nn
\end{align*}
\end{lem}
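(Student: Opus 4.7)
The inequality is a standard subadditivity statement for the concave power function $t \mapsto t^\gamma$ on $[0,\infty)$ with $\gamma \in (0,1)$. My plan is to split into two trivial cases according to the sign of $y_1 - x_1$.

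\textbf{Case 1: $y_1 \le x_1$.} Here the estimate is immediate from monotonicity of $t \mapsto t^\gamma$: one has $y_1^\gamma \le x_1^\gamma \le |y_1 - x_1|^\gamma + x_1^\gamma$, since $|y_1 - x_1|^\gamma \ge 0$.

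\textbf{Case 2: $y_1 > x_1$.} Write $y_1 = (y_1 - x_1) + x_1$ and reduce to the subadditivity inequality
\begin{equation*}
(a+b)^\gamma \le a^\gamma + b^\gamma \qquad (a,b \ge 0,~ \gamma \in (0,1)).
\end{equation*}
To prove this, assume $a + b > 0$ (otherwise both sides are zero). Set $s = a/(a+b), t = b/(a+b)$, so that $s + t = 1$ and $s, t \in [0, 1]$. Since $0 < \gamma < 1$, one has $s^\gamma \ge s$ and $t^\gamma \ge t$ (because for $u \in [0,1]$ and $\gamma \in (0,1)$, $u^\gamma \ge u$). Adding gives $s^\gamma + t^\gamma \ge 1$, and multiplying through by $(a+b)^\gamma$ yields the subadditivity claim. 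Applying this with $a = y_1 - x_1$ and $b = x_1$ gives $y_1^\gamma \le (y_1 - x_1)^\gamma + x_1^\gamma = |y_1 - x_1|^\gamma + x_1^\gamma$.

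There is no real obstacle here; the lemma is essentially a one-line consequence of the concavity of $t^\gamma$. The only thing to be slightly careful about is not dividing by zero when $a + b = 0$, which is handled by treating that case separately.
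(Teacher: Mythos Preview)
Your proof is correct and follows the same case split as the paper. In Case~2 the paper establishes $(y_1)^\gamma - x_1^\gamma \le (y_1-x_1)^\gamma$ via the integral comparison $\gamma\int_{x_1}^{y_1} s^{\gamma-1}\,ds \le \gamma\int_{x_1}^{y_1}(s-x_1)^{\gamma-1}\,ds$, whereas you reduce to subadditivity of $t\mapsto t^\gamma$ and prove it by the normalization $s+t=1$, $s^\gamma+t^\gamma\ge s+t$; both are standard one-line arguments for the same elementary fact.
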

\begin{proof} If $y_1\le x_1$, the inequality is obvious. For $y_1>x_1$ we have $y_1\ge y_1-x_1>0$ and hence 
$\ga y_1^{\ga-1}\le\ga(y_1-x_1)^{\ga-1}$ so that
\begin{align*}
y_1^\ga-x_1^\ga\le\ga\int_{x_1}^{y_1}s^{\ga-1}~ds\le\ga\int_{x_1}^{y_1}(s-x_1)^{\ga-1}~ds=(y_1-x_1)^\ga.
\end{align*}
\end{proof}
\begin{prop}\label{prop:A8} Let $II=\dD \setminus I$ with $I$ as in proposition \ref{prop:A3}. Then
\begin{align*}
\left| \int_{II}\frac{\d G_i^k}{\d x_j}\om(y)~dy\right| \ale M x_i^{-\al}
\end{align*}
\end{prop}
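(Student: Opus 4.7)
The plan is to proceed as in the proof of Proposition \ref{prop:A3}, using the identity of Proposition \ref{prop:A7} to exchange $\d_{x_j} G_i^k$ for $-\d_{y_j} G_i^k$ up to a correction, and then integrating by parts. The advantage over Proposition \ref{prop:A3} is that on $II$ one has $|y-x|\ge\half d_1(x)$, so we stay bounded away from the singularity of $G_i^k$; this is what makes the $|\log d(x)|$ factor of Proposition \ref{prop:A3} absent here.

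First I would apply Proposition \ref{prop:A7} and integrate by parts on $II$, obtaining a volume integral $\int_{II} G_i^k\,\d_{y_j}\om\,dy$, a boundary integral on $\d II$, and a correction term $\de_{ij}\int_{II} E_{ii}\,\om\,dy$ coming from Proposition \ref{prop:A7}. The boundary $\d II$ splits into the portion of $\d\dD$ lying outside $I$ and the inner circle $\d B(\half d_1(x),x)\cap\dD$. The two coordinate axes contained in $\d\dD$ contribute nothing since $\om\equiv 0$ there; only the top edge (for $j=2$) and the right edge (for $j=1$) of $\d\dD$ contribute. The inner-circle contribution is handled as in Proposition \ref{prop:A3}: $|y-x|=\half d_1(x)$ and $y_i\asymp x_i$ (by \eqref{relXY}) give $|\om|\ale M x_i^{1-\al}$, the arclength is $\ale d_1(x)\le x_i$, and hence the contribution is $\ale M x_i^{-\al}$.

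For the remaining boundary pieces on $\d\dD$ and the volume integral, I would use the refined kernel bound $|G_i^k|\ale|y-x|^{-1}(y_i+x_i)^{-1}$ (obtained from the denominator factor $|y-\til x|^{-2}\ge(y_i+x_i)^{-2}$), together with $|\d_{y_j}\om|\le My_j^{-\al}$ and $|\om|\ale My_i^{1-\al}$. Passing to polar coordinates around $x$ on $II$, splitting into $\{y_i\le x_i\}$ and $\{y_i\ge x_i\}$, and using Lemma \ref{lem:A2} to absorb the factor $y_j^{-\al}$ into $|y_j-x_j|^{-\al}+x_j^{-\al}$, one gets a bound $\ale M x_i^{-\al}$; the extra $(y_i+x_i)^{-1}$ decay is essential to avoid a $|\log d(x)|$ from the radial integration.

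The main obstacle is the correction term. As stated, Proposition \ref{prop:A7} gives only $|E_{ii}|\ale x_i^{-2}|y-x|^{-1}$, which combined with $|\om|\ale My_i^{1-\al}$ produces $M x_i^{-2}$ times a harmless factor, far too weak. The resolution is to use the sharper estimate $|E_{ii}|\ale|y-x|^{-1}|y-\til x|^{-2}$ that actually emerges from the computation behind Proposition \ref{prop:A7}. Together with $|y-\til x|\ge y_i+x_i$ and $|\om|\ale My_i^{1-\al}$, this gives $|E_{ii}\om|\ale M|y-x|^{-1}(y_i+x_i)^{-1-\al}$, after which the same polar-coordinate argument yields $M x_i^{-\al}$. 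The essential point throughout is to exploit the decay of the reflected-kernel factor $|y-\til x|^{-2}$ in combination with the vanishing of $\om$ on the axis $y_i=0$ (which is the source of the favorable bound $|\om|\ale My_i^{1-\al}$).
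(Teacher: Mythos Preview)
Your approach is unnecessarily elaborate and takes a genuinely different route from the paper. The paper's proof is a two-line direct estimate: on $II$ one has $|y-x|\ge\half d_1(x)$, so the pointwise bound $\bigl|\d_{x_j}G_i^k\bigr|\ale|y-x|^{-3}$ from Proposition~\ref{prop:A1} is integrable without any logarithmic loss. Combining this with $|\om(y)|\ale M y_i^{1-\al}$ and Lemma~\ref{lem:A2} to write $y_i^{1-\al}\le|y_i-x_i|^{1-\al}+x_i^{1-\al}\le|y-x|^{1-\al}+x_i^{1-\al}$, one gets
\[
\int_{II}|y-x|^{-2-\al}\,dy + x_i^{1-\al}\int_{II}|y-x|^{-3}\,dy
\ale d_1(x)^{-\al}+x_i^{1-\al}d_1(x)^{-1}\ale x_i^{-\al},
\]
using $d_1(x)\le x_i$. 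No integration by parts, no boundary terms, no correction term.

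The reason integration by parts was essential in Proposition~\ref{prop:A3} is that the region $I$ contains the singularity $y=x$, so $|y-x|^{-3}$ is not locally integrable there; one must trade $\d_{x_j}$ for $\d_{y_j}$ to access the weaker singularity $|G_i^k|\ale|y-x|^{-1}x_i^{-1}$. On $II$ this difficulty simply does not arise, and your machinery is solving a problem that is no longer present. Your approach could in principle be pushed through, but as you yourself note, it requires a sharpened version of the correction estimate in Proposition~\ref{prop:A7} that the paper never states; you would have to derive and justify $|E_{ii}|\ale|y-x|^{-1}|y-\til x|^{-2}$ separately for each of the four kernels. Even granting that, you are left with boundary integrals on the top and right edges of $\dD$ and a volume integral requiring case splits, all of which the paper avoids entirely.
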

\begin{proof}
Here we have to distinguish two cases. Assume first $d_1(x) = x_i$. Then
using proposition \ref{prop:A1}, \eqref{estOm} and Lemma \ref{lem:A2} we have
\begin{align*}
&\left| \int_{II}\frac{\d G_i^k}{\d x_j}\om(y)~dy\right| \ale \int_{II} \left|\frac{\d G_i^k}{\d x_j}\right||\om(y)|~dy\\
&\ale M \int_{II} |y-x|^{-3}y_i^{1-\al}~dy\ale M \int_{II} |y-x|^{-3}(|y_i-x_i|^{1-\al}+x_i^{1-\al})\\
&\ale M \int_{II} |y-x|^{-2-\al}~dy+M x_i^{1-\al}\int_{II} |y-x|^{-3}\\
&\ale M \int_{\half d_1(x)}^\infty\frac{1}{\rho^{2+\al}}~\rho~d\rho
+ M x_i^{1-\al}\int_{\half d_1(x)}^\infty\frac{1}{\rho^{3}}~\rho~d\rho\\
&\ale M d_1(x)^{-\al}+M x_i^{1-\al}d_1(x)^{-1}\ale M x_i^{-\al}+M x_i^{1-\al}x_i^{-1}\ale M x_i^{-\al}.
\end{align*}
Now let $d_1(x) = x_r$, $r\neq i$. Without loss of generality, we write down only the case $i=1$
(so $d_1(x)=x_2$). From
the explicit relations in Proposition \ref{propAllG} and the reflection inequalities we get
\begin{align*}
\left|\frac{\d G_1^k}{\d x_i}\right|\leq |y-x|^{-2} |y-\til x|^{-1} 
\end{align*}
and hence again by \eqref{estOm} and Lemma \ref{lem:A2},
\begin{align*}
&\left| \int_{II}\frac{\d G_1^k}{\d x_i}\om(y)~dy\right| \ale M \int_{II} \left|\frac{\d G_1^k}{\d x_i}\right| y_2^{1-\al}~dy\\
&\ale M \int_{II}|y-x|^{-1-\al} |y-\til x|^{-1} dy + M x_2^{1-\al}\int_{II} |y-x|^{-2}|y-\til x|^{-1}
\end{align*}
We continue with the integral without the factor $x_2^{1-\al}$ in front; first we enlarge the integration domain 
by replacing $II$ with $\dD$, then we split the integration domain into a ball $B(0, 2x_1)$ and the rest.
\begin{align*}
\int_{\dD}|y-x|^{-1-\al} |y-\til x|^{-1} dy \ale& \int_{\dD \cap B(0, 2 x_1)}|y-x|^{-1-\al} |y-\til x|^{-1} dy\\
&+ \int_{\dD \setminus B(0, 2 x_1)}|y-x|^{-1-\al} |y-\til x|^{-1} dy\\
 \ale& ~x_1^{-1}  \int_{\dD \cap B(x, 10 x_1)}|y-x|^{-1-\al} dy + \int_{B(0, 10) \setminus B(0, 2 x_1)}|y|^{-2-\al} dy\\
\ale& ~x_1 x_1^{1-\al} + x_1^{-\al} \ale x_1^{-\al}. 
\end{align*} 
Here, we used that $x_2\leq x_1$ (since $d_1(x)=x_2$), so that $|y|\ale |y-x|, |y|\ale |y-\til x|$ in the second in\-te\-gral.
Continuing with $x_2^{1-\al}\int_{II} |y-x|^{-2}|y-\til x|^{-1}$, we get
\begin{align*}
& x_2^{1-\al} \int_{II} |y-x|^{-2}|y-\til x|^{-1} dy \ale  x_2^{1-\al} x_1^{-\al} \int_{II} |y-x|^{-3+\al} dy\\
& \ale x_2^{1-\al} x_1^{-\al} x_2^{-1+\al} \al x_1^{-\al}
\end{align*} 
using $|y-\til x|\geq x_1^{\al} |y-\til x|^{1-\al}\geq x_1^{\al} |y-x|^{1-\al}$.
\end{proof}


\begin{prop}\label{prop:A9}
For $i\neq j$,
\begin{align*}
\left| \int_{\ov\setminus \dD}\left[\frac{\d G_i^1}{\d x_j}+\frac{\d G_i^2}{\d x_j}\right]\om(y)~dy\right| 
\leq ~~C(\ga_1,\ga_2)x_i^{-(\ga_1+\ga_2)}x_j^{\ga_2} d(x)^{-1+\ga_1}
\end{align*}
where $\ga_1\in (0, 1), \ga_2\in [0, 1), \ga_1+\ga_2 < 1$. Also,
\begin{align*}
\left| \int_{\ov\setminus \dD}\left[\frac{\d G_i^i}{\d x_1}+\frac{\d G_i^2}{\d x_i}\right]\om(y)~dy\right| 
\leq ~~C(\ga_1)x_i^{-\ga_1} d(x)^{-1+\ga_1}
\end{align*}
\end{prop}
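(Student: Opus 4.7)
The argument combines the pointwise kernel estimates of Propositions~\ref{prop:A1}, \ref{prop:A6}, and \ref{prop:A7} with a polar-coordinate integration on the complement domain $\ov\setminus\dD$. The essential geometric observation is that for $x\in D$ and $y\in\ov\setminus\dD$ one has the separation
\[
|y-x|\age\min(d(x),\de_3),
\]
since either $y_2\ge\de_2$ (so $|y-x|\ge y_2-x_2\ge d(x)$), or $y_1\ge\de_1+\de_3$ (so $|y-x|\ge\de_3$). In particular the integrands are not truly singular on this region, and no principal-value cancellation is needed.

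For the first bound ($i\neq j$), I would apply Proposition \ref{prop:A6} to obtain the refined pointwise estimate
\[
\left|\frac{\d(G_i^1+G_i^2)}{\d x_j}(x,y)\right|\ale x_i^{-(\ga_1+\ga_2)}x_j^{\ga_2}|y-x|^{-(3-\ga_1)},
\]
bound $|\om(y)|$ by $\|\om\|_\infty$ (absorbed into the constant by the paper's convention), factor $x_i^{-(\ga_1+\ga_2)}x_j^{\ga_2}$ out, and switch to polar coordinates centered at $x$:
\[
\int_{\ov\setminus\dD}|y-x|^{-(3-\ga_1)}\,dy\ale\int_{c\,d(x)}^{C}r^{-(2-\ga_1)}\,dr\ale d(x)^{-1+\ga_1}.
\]
The assumption $\ga_1<1$ is what makes the antiderivative evaluate correctly at the lower limit, and the upper limit contributes only a constant; the hypothesis $\ga_1+\ga_2<1$ keeps the exponents of $x_i,x_j$ in range.

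For the second bound ($i=j$), the $(G_i^1,G_i^2)$-cancellation of Proposition \ref{prop:A6} is no longer available, so I would instead use the cruder estimate $|\d G_i^k/\d x_i|\ale|y-x|^{-3}$ from Proposition \ref{prop:A1} together with Proposition \ref{prop:A7}, which converts $\d_{x_i}G_i^k$ into $-\d_{y_i}G_i^k+x_i^{-2}\,O(|y-x|^{-1})$. An integration by parts on the $\d_{y_i}G_i^k$ piece generates a boundary integral over $\d\dD\cap\ov$---whose dominant contribution along $\{y_2=\de_2\}$ produces the factor $d(x)^{-1+\ga}$ after the cutoff/polar computation analogous to the one used in the proof of Proposition \ref{prop:A3}---plus an interior integral over $\d_{y_i}\om$ controlled by the same ingredients. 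The residual $x_i^{-2}$ correction is handled by splitting $|y-x|^{-1}=|y-x|^{-(1-\ga)}|y-x|^{-\ga}$ and absorbing one power against $x_i^{-\ga}$, using that one may take $x_i$ small relative to $\min(d(x),\de_3)$ on the relevant part of $\ov\setminus\dD$ once $\de_1,\de_2$ are chosen sufficiently small.

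The main obstacle is in the second estimate: producing the $x_i^{-\ga}$ factor without the $(G_i^1,G_i^2)$ cancellation forces one to use the integration-by-parts manipulation via Proposition \ref{prop:A7}, and then to track carefully which of the three length scales $x_i$, $d(x)$, $\de_3$ controls $|y-x|$ on each subregion of $\ov\setminus\dD$. The bookkeeping is analogous to---but more delicate than---the corresponding case $i=j$ in the proof of Proposition \ref{prop:A3}, and is what ultimately constrains the admissible range of $\ga$.
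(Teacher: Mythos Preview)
Your treatment of the case $i\neq j$ is correct and is exactly what the paper does: apply Proposition~\ref{prop:A6}, pull out the $x_i^{-(\ga_1+\ga_2)}x_j^{\ga_2}$ factor, bound $|\om|$ by $\|\om\|_\infty$, and integrate $|y-x|^{-(3-\ga_1)}$ in polar coordinates over $\ov\setminus\dD\subset\ov\setminus B(x,d(x))$.

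Your plan for the case $i=j$, however, is both unnecessarily complicated and has genuine gaps. The paper does \emph{not} integrate by parts via Proposition~\ref{prop:A7}; it simply records the pointwise interpolation bound
\[
\left|\frac{\d G_i^k}{\d x_i}(x,y)\right|\ale x_i^{-\ga}\,|y-x|^{-3+\ga}
\]
(which follows from the explicit formulas in Appendix~B together with $|y-\til x|\ge x_1$, $|y-\bar x|\ge x_2$, $|y-\til x|,|y-\bar x|\ge|y-x|$) and then feeds this into the same polar-coordinate integral as in the $i\neq j$ case. That is the entire argument.

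Your integration-by-parts route runs into two concrete problems. First, after integrating by parts you would need to control $\int_{\ov\setminus\dD}G_i^k\,\d_{y_i}\om\,dy$, but on $\ov\setminus\dD$ no bound on $\d_{y_i}\om$ is available beyond $\|\nabla\om\|_\infty$, and the estimate $|G_i^k|\ale x_i^{-1}|y-x|^{-1}$ would then yield only $x_i^{-1}\cdot C$, which is not of the required form $x_i^{-\ga}d(x)^{-1+\ga}$. Second, your handling of the residual $x_i^{-2}\cO(|y-x|^{-1})$ term does not work: integrating $|y-x|^{-1}$ over $\ov\setminus\dD$ gives a bounded quantity, so you are left with $x_i^{-2}$, and the claim that one can ``take $x_i$ small relative to $\min(d(x),\de_3)$'' is false in general---for $x$ close to the top of $D$ one has $d(x)$ arbitrarily small while $x_i$ stays of order $\de_i$. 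The missing idea is precisely the direct interpolated kernel bound above, which makes the whole integration-by-parts detour unnecessary.
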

\begin{proof}
As a preparation, we note that for $x\in D$, $0<\ga_1< 1$,
\begin{align}\label{prop_a9_ineq1}
\int_{\ov\setminus \dD} |y-x|^{-3 + \ga_1} \ale  d(x)^{-1+\ga_1}.
\end{align}
This follows from
\begin{align*}
\int_{\ov\setminus \dD} |y-x|^{-3 + \ga_1} &\leq\int_{\ov \setminus B(x, d(x))} |y-x|^{-3 + \ga_1}\\
&\leq \int_{B(x, 10)\setminus B(x, d(x))}|y-x|^{-3 + \ga_1},
\end{align*}
since $\ov\setminus \dD$ is contained in  $\ov \setminus B(x, d(x))$ because of $\de_2 < \de_3$ and $\de_1 < \de_2$.

From Proposition \ref{prop:A6} we get in case $i\neq j$
\begin{align*}
\left| \int_{\ov\setminus \dD}\left[\frac{\d G_i^1}{\d x_1}+\frac{\d G_i^2}{\d x_j}\right]\om(y)~dy\right| &\leq
x_i^{-(\ga_1+\ga_2)} x_j^{\ga_2} \int_{\ov\setminus \dD} |y-x|^{-3+\ga_1} \\
&\leq x_i^{-(\ga_1+\ga_2)} x_j^{\ga_2} d(x)^{-1+\ga_1},
\end{align*}
according to \eqref{prop_a9_ineq1}.

For the second inequality of the Proposition, we note that 
\begin{align*}
\left|\frac{\d G_i^k}{\d x_i}\right|&\ale x_i^{-\ga_1} |y-x|^{-3+\ga_1},
\end{align*}
and use \eqref{prop_a9_ineq1}.
\end{proof}


\begin{prop}\label{prop:A4}For $x\in D$,
\begin{align*}
\left|\pvint_{\ov}\left[\frac{\d G_i^1}{\d x_j}+\frac{\d G_i^2}{\d x_j}\right] \om(y)~dy\right|&\leq
 M x_i^{-\al} (1+\de_{j2}|\log d(x)|)\\
&\,\, +  C(\ga_1, \ga_2) x_i^{-(\ga_1+\ga_2)}x_j^{\ga_2} d(x)^{-1+\ga_1}  \quad (i\neq j)\\
\left|\pvint_{\ov}\left[\frac{\d G_i^1}{\d x_i}+\frac{\d G_i^2}{\d x_i}\right] \om(y)~dy\right|&\leq 
 M x_i^{-\al}(1+\de_{i2}|\log d(x)|) + C(\ga_1) x_i^{-\ga_1}d(x)^{-1+\ga_1}
\end{align*}
with $\ga,\ga_1\in (0, 1), \ga_2\in [0, 1), \ga_1+\ga_2 < 1$. 
\end{prop}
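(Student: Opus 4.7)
The plan is to decompose the integration domain $\ov$ into three disjoint pieces adapted to the singularity at $y=x$ and to the feeding box: the inner disk $I = B(\half d_1(x), x) \cap \dD$ around the singularity, the outer part of the feeding box $II = \dD \setminus I$, and the far region $\ov \setminus \dD$. Each piece is then controlled by exactly one of the propositions proved earlier in the appendix, which is the reason they were stated in this specific form.

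For the principal-value integral over the inner disk $I$, I would invoke Proposition \ref{prop:A3}. For the middle annular region $II$, the kernel is no longer singular ($|y-x|\gtrsim d_1(x)$), so integration by parts is not needed and the pointwise bounds from Proposition \ref{prop:A1} together with $|\om(y)|\ale M y_i^{1-\al}$ give the desired estimate, which is exactly Proposition \ref{prop:A8}. For the far part $\ov \setminus \dD$, the feeding-box control on $\om$ is no longer available, so the cancellation between $\d_{x_j} G_i^1$ and $\d_{x_j} G_i^2$ established in Proposition \ref{prop:A6}, combined with the geometric observation $\ov\setminus \dD \subset \ov\setminus B(x, d(x))$ and the bound $|\om|\ale \|\om\|_\infty$, yields the required decay in $d(x)$; this is precisely Proposition \ref{prop:A9}.

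Adding the three contributions gives the stated inequality. To match the leading term $M x_i^{-1} x_j^{1-\al}$ of Proposition \ref{prop:A4}, one uses on $I$ the relation \eqref{relXY}, which gives $y_j \geq \half x_j$ and hence $|\d\om/\d y_j|\ale M x_j^{-\al}$ when the derivative has been transferred onto $\om$, then integrates $|G_i^k|\ale x_i^{-1}|y-x|^{-1}$ over a disk of radius $\ale d_1(x)$, and finally uses $d_1(x)\le x_j$ to combine the $d_1(x)$ with $x_j^{-\al}$ into $x_j^{1-\al}$. The same shape is obtained on $II$ by a similar volume estimate. The case $i = j$ runs identically, the only difference being the extra diagonal term $x_i^{-2}\cO(|y-x|^{-1})$ from Proposition \ref{prop:A7}, which is harmless and produces the same type of bound with $x_j$ replaced by $x_i$.

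The main obstacle, already absorbed into Proposition \ref{prop:A3}, is bookkeeping the boundary contributions produced by integration by parts on $\d I$: the horizontal part $\Sigma$ of $\d I$ (where $I$ meets the top of $\dD$) is the source of the logarithm $|\log d(x)|$, and the factor $\nu_j = \de_{j2}$ of the outward normal explains why this logarithm appears only when $j = 2$. Once this piece is in place, no new estimate is needed and the proof reduces to assembling the three inequalities and noting that $x_j \leq 1$ ensures that the combined coefficient $M x_i^{-1}x_j^{1-\al}$ dominates the intermediate forms arising on $I$ and $II$.
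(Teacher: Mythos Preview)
Your proposal is correct and follows the same approach as the paper: decompose the domain into $I$, $II = \dD\setminus I$, and $\ov\setminus\dD$, and apply Propositions~\ref{prop:A3}, \ref{prop:A8}, and \ref{prop:A9} respectively. Your additional discussion of how the intermediate bounds on $I$ and $II$ can be massaged into the stated form $M x_i^{-1}x_j^{1-\al}$ (by tracing the $d_1(x)$ factor through the proof of Proposition~\ref{prop:A3} and using $d_1(x)\le x_j$) goes slightly beyond what the paper writes explicitly, but is in the right spirit.
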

\begin{proof}
We split the integral into a principal value integral over $\dD$ and a convergent integral over $\ov \setminus \dD$. The integral over $\dD$ is 
further split in to integrals over the domains $I=B(x, \half d_1(x))$ and $II= \dD\setminus I$, which are estimated by Propositions
\ref{prop:A3} and \ref{prop:A8}. The part over $\ov \setminus \dD$ is estimated by Proposition \ref{prop:A9}.
\end{proof}


\vspace{0.5cm}
\subsection{Appendix B}~\\

\noindent
{\bf Derivation of $Q_i$ and $G_i^j$.}
\vspace{0.1cm}

\noindent
In all integrals over infinite domains it is understood that $\om$ is extended periodically and the integrals are understood as
limits in the mean.
We derive only $Q_1$ and the formulas for $G_1^1,~G_1^2$. $Q_2,~G_2^1$ and $G_2^2$ are analogous.
The first component of the velocity field is
\begin{align*}
u_1(x, t)& = \frac{1}{2\pi}\int_{\R^2} \frac{-(y_2-x_2)}{|y-x|^2} \om(y, t)~dy\\
&=\frac{1}{2\pi}\left\{\int_{(0,\infty)^2}
+\int_{(-\infty,0)\times (0,\infty)}+\int_{(0,\infty)\times (-\infty,0)}+\int_{(-\infty,0)^2}\right\} \frac{-(y_2-x_2)}{|y-x|^2} \om(y, t)~dy.
\end{align*}
Recall $\til x=(-x_1,x_2),~\bar x=(x_1,-x_2)$. Using the double-odd symmetry of $\om$ we can write
\begin{align*}
\int_{(-\infty,0)\times (0,\infty)}\frac{-(y_2-x_2)}{|y-x|^2} \om(y, t)~dy&=\int_{(0,\infty)^2}\frac{y_2-x_2}{|y-\til x|^2} \om(y, t)~dy\\
\int_{(0,\infty)\times (-\infty,0)}\frac{-(y_2-x_2)}{|y-x|^2} \om(y, t)~dy&=\int_{(0,\infty)^2}\frac{-(y_2+x_2)}{|y-\bar x|^2} \om(y, t)~dy\\
\int_{(-\infty,0)^2} \frac{-(y_2-x_2)}{|y-x|^2} \om(y, t)~dy&=\int_{(0,\infty)^2}\frac{y_2+x_2}{|y+x|^2} \om(y, t)~dy.
\end{align*}
Next we group the integrals in the following way
\begin{align*}
&\int_{(0,\infty)^2} \frac{-(y_2-x_2)}{|y-x|^2} \om(y, t)~dy + \int_{(0,\infty)^2}\frac{y_2-x_2}{|y-\til x|^2} \om(y, t)~dy\\
=&-4x_1\int_{(0,\infty)^2} \underbrace{\frac{y_1(y_2-x_2)}{|y-x|^2|y-\til x|^2}}_{G_1^1(x,y)} \om(y, t)~dy
\end{align*}
\begin{align*}
&\int_{(0,\infty)^2}\frac{-(y_2+x_2)}{|y-\bar x|^2} \om(y, t)~dy+\int_{(0,\infty)^2}\frac{y_2+x_2}{|y+x|^2} \om(y, t)~dy\\
=&-4x_1\int_{(0,\infty)^2} \underbrace{\frac{y_1(y_2+x_2)}{|y+x|^2|y-\bar x|^2}}_{G_1^2(x,y)} \om(y, t)~dy.
\end{align*}
Together we obtain
\begin{align*}
u_1(x,y)=&-x_1\frac{2}{\pi}\int_{[0,1]^2}[G_1^1(x,y)+G_1^2(x,y)]\om(y, t)~dy\\
&-x_1\frac{2}{\pi}\int_{\R_+^2\setminus [0,1]^2}[G_1^1(x,y)+G_1^2(x,y)]\om(y, t)~dy\\
=&-x_1\left(\frac{2}{\pi}\int_{[0,1]^2}[G_1^1(x,y)+G_1^2(x,y)]\om(y, t)~dy+ Q_1^r(x,t)\right)=-x_1Q_1(x,t).
\end{align*}

\begin{prop}\label{propAllG} The following relations hold:
\begin{align*}
\frac{\d G_1^1}{\d x_1}&=-\frac{2y_1(y_1+x_1)(y_2-x_2)}{|y-x|^2|y-\til x|^4}
+\frac{2y_1(y_1-x_1)(y_2-x_2)}{|y-x|^4|y-\til x|^2}\\
\frac{\d G_1^1}{\d x_2}&=\frac{2y_1(y_2-x_2)^2}{|y-x|^2|y-\til x|^4}
+\frac{2y_1(y_2-x_2)^2}{|y-x|^4|y-\til x|^2}-\frac{y_1}{|y-x|^2|y-\til x|^2}\\
\frac{\d G_1^1}{\d y_1}&=-\frac{2y_1(y_1+x_1)(y_2-x_2)}{|y-x|^2|y-\til x|^4}
-\frac{2y_1(y_1-x_1)(y_2-x_2)}{|y-x|^4|y-\til x|^2}+\frac{y_2-x_2}{|y-x|^2|y-\til x|^2}\\
\frac{\d G_1^1}{\d y_2}&=-\frac{2 y_1(y_2-x_2)^2}{|y-x|^2|y-\til x|^4}
-\frac{2y_1(y_2-x_2)^2}{|y-x|^4|y-\til x|^2}+\frac{y_1}{|y-x|^2|y-\til x|^2}
\end{align*}
\begin{align*}
\frac{\d G_1^2}{\d x_1}&=-\frac{2y_1(y_1+x_1)(y_2+x_2)}{|y+x|^4|y-\bar x|^2}
+\frac{2y_1(y_1-x_1)(y_2+x_2)}{|y+x|^2|y-\bar x|^4}\\
\frac{\d G_1^2}{\d x_2}&=-\frac{2y_1(y_2+x_2)^2}{|y+x|^4|y-\bar x|^2}
-\frac{2y_1(y_2+x_2)^2}{|y+x|^2|y-\bar x|^4}+\frac{y_1}{|y+x|^2|y-\bar x|^2}\\
\frac{\d G_1^2}{\d y_1}&=-\frac{2y_1(y_1+x_1)(y_2+x_2)}{|y+x|^4|y-\bar x|^2}
-\frac{2y_1(y_1-x_1)(y_2+x_2)}{|y+x|^2|y-\bar x|^4}+\frac{y_2+x_2}{|y+x|^2|y-\bar x|^2}\\
\frac{\d G_1^2}{\d y_2}&=-\frac{2y_1(y_2+x_2)^2}{|y+x|^4|y-\bar x|^2}
-\frac{2y_1(y_2+x_2)^2}{|y+x|^2|y-\bar x|^4}+\frac{y_1}{|y+x|^2|y-\bar x|^2}
\end{align*}
\begin{align*}
\frac{\d G_2^1}{\d x_1}&=-\frac{2y_2(y_1+x_1)^2}{|y+x|^4|y-\til x|^2}
-\frac{2y_2(y_1+x_1)^2}{|y+x|^2|y-\til x|^4}+\frac{y_2}{|y+x|^2|y-\til x|^2}\\
\frac{\d G_2^1}{\d x_2}&=-\frac{2y_2(y_1+x_1)(y_2+x_2)}{|y+x|^4|y-\til x|^2}
+\frac{2y_2(y_1+x_1)(y_2-x_2)}{|y+x|^2|y-\til x|^4}\\
\frac{\d G_2^1}{\d y_1}&=-\frac{2y_2(y_1+x_1)^2}{|y+x|^4|y-\til x|^2}
-\frac{2y_2(y_1+x_1)^2}{|y+x|^2|y-\til x|^4}+\frac{y_2}{|y+x|^2|y-\til x|^2}\\
\frac{\d G_2^1}{\d y_2}&=-\frac{2y_2(y_1+x_1)(y_2+x_2)}{|y+x|^4|y-\til x|^2}
-\frac{2y_2(y_1+x_1)(y_2-x_2)}{|y+x|^2|y-\til x|^4}+\frac{y_1+x_1}{|y+x|^2|y-\til x|^2}
\end{align*}
\begin{align*}
\frac{\d G_2^2}{\d x_1}&=\frac{2y_2(y_1-x_1)^2}{|y-x|^2|y-\bar x|^4}
+\frac{2y_2(y_1-x_1)^2}{|y-x|^4|y-\bar x|^2}-\frac{y_2}{|y-x|^2|y-\bar x|^2},\\
\frac{\d G_2^2}{\d x_2}&=-\frac{2y_2(y_1-x_1)(y_2+x_2)}{|y-x|^2|y-\bar x|^4}
+\frac{2y_2(y_1-x_1)(y_2-x_2)}{|y-x|^4|y-\bar x|^2},\\
\frac{\d G_2^2}{\d y_1}&=-\frac{2y_2(y_1-x_1)^2}{|y-x|^2|y-\bar x|^4}
-\frac{2y_2(y_1-x_1)^2}{|y-x|^4|y-\bar x|^2}+\frac{y_2}{|y-x|^2|y-\bar x|^2},\\
\frac{\d G_2^2}{\d y_2}&=-\frac{2y_2(y_1-x_1)(y_2+x_2)}{|y-x|^2|y-\bar x|^4}
-\frac{2y_2(y_1-x_1)(y_2-x_2)}{|y-x|^4|y-\bar x|^2}+\frac{y_1-x_1}{|y-x|^2|y-\bar x|^2}.
\end{align*}
\end{prop}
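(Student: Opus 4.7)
The statement is a catalogue of sixteen explicit partial-derivative identities, so the proof is a direct calculation; the plan is to organize the work so that only a handful of truly independent differentiations are actually performed. The first step is to record three reflection symmetries of the kernels themselves. The substitution $x_2 \mapsto -x_2$ sends $|y-x|^2 \leftrightarrow |y-\ba x|^2$ and $|y-\til x|^2 \leftrightarrow |y+x|^2$, which immediately yields $G_1^2(x_1,x_2,y) = G_1^1(x_1,-x_2,y)$; analogously the substitution $x_1 \mapsto -x_1$ gives $G_2^1(x_1,x_2,y) = G_2^2(-x_1,x_2,y)$; and the coordinate swap $\si:(a_1,a_2)\mapsto(a_2,a_1)$, applied simultaneously to $x$ and $y$, sends $|y-\til x|^2 \mapsto |y-\ba x|^2$ and produces $G_2^2(x,y) = G_1^1(\si x,\si y)$. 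These three relations reduce all sixteen formulas to the four derivatives $\d_{x_1}G_1^1, \d_{x_2}G_1^1, \d_{y_1}G_1^1, \d_{y_2}G_1^1$; the remaining twelve then follow by chain rule and appropriate sign flips on the $x_2$- or $x_1$-derivatives.

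For the four derivatives of $G_1^1 = y_1(y_2-x_2)/(|y-x|^2|y-\til x|^2)$ I would use the product rule in the form $\d_\ast G_1^1 = \bigl(\d_\ast\text{numerator}\bigr)\cdot(|y-x|^2|y-\til x|^2)^{-1} + y_1(y_2-x_2)\cdot\d_\ast(|y-x|^2|y-\til x|^2)^{-1}$. The first piece contributes nonzero ``lone'' terms only for $\d_\ast\in\{\d_{x_2},\d_{y_1},\d_{y_2}\}$, producing precisely the isolated fractions $\pm y_1/(|y-x|^2|y-\til x|^2)$ and $(y_2-x_2)/(|y-x|^2|y-\til x|^2)$ visible in the statement. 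The second piece is handled by the elementary identities $\d_{x_j}|y-x|^2=-2(y_j-x_j)$ (with opposite sign for $\d_{y_j}$), $\d_{x_1}|y-\til x|^2=+2(y_1+x_1)$, $\d_{x_2}|y-\til x|^2=-2(y_2-x_2)$, and their $y$-analogues; one line of algebra per derivative then delivers the remaining two fractions in each formula.

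The only obstacle is bookkeeping. The sign from $\d_{x_1}\til x_1=-1$ flips the $|y-\til x|^2$ term relative to the $|y-x|^2$ term, and similarly for $\d_{x_2}\ba x_2=-1$; in passing from $G_1^1$ to $G_2^2$ via the $\si$-symmetry, one must keep in mind that both $x_1\leftrightarrow x_2$ and $y_1\leftrightarrow y_2$ are relabelled simultaneously, so that derivative indices move accordingly. A complete proof would therefore compute the four derivatives of $G_1^1$ once, then mechanically propagate them through the three symmetries to obtain the twelve remaining identities, checking each resulting expression against the displayed list.
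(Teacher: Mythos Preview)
Your approach is correct and in fact more organized than what the paper does: the paper gives no proof at all for this proposition, simply listing the sixteen identities as computational facts in Appendix B and relying on the reader to verify them by brute-force differentiation. Your reduction via the three reflection symmetries $G_1^2(x_1,x_2,y)=G_1^1(x_1,-x_2,y)$, $G_2^1(x_1,x_2,y)=G_2^2(-x_1,x_2,y)$, and $G_2^2(x,y)=G_1^1(\si x,\si y)$ is sound (I checked them against the kernel definitions), and it cuts the work to four honest differentiations plus mechanical bookkeeping; this is a cleaner way to present the result than the bare list the paper provides.
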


\bibliographystyle{amsplain}

\end{document}